\newtheorem{thm}{Theorem}[section]
\newtheorem{prop}[thm]{Proposition}
\newtheorem{lemma}[thm]{Lemma}
\newtheorem{cor}[thm]{Corollary}
\newtheorem{dfn}[thm]{Definition}
\theoremstyle{definition} \newtheorem{ex}[thm]{Example}
\newtheorem{question}[thm]{Question}
\theoremstyle{definition} \newtheorem{rmk}[thm]{Remark}
\newtheorem{algo}[thm]{Algorithm}
\newcommand{\cc}{\mathbb{C}}
\newcommand{\rr}{\mathbb{R}}
\newcommand{\qq}{\mathbb{Q}}
\newcommand{\zz}{\mathbb{Z}}
\newcommand{\proj}{\mathbb{P}}
\newcommand{\GL}{\mathrm{GL}}
\newcommand{\PGL}{\mathrm{PGL}}
\newcommand{\Berk}{\mathbb{P}_{\cc_K}^{1, \mathrm{an}}}
\newcommand{\Smin}{S^{\mathrm{min}}}
\title[Branch points of split degenerate superelliptic curves I]{Branch points of split degenerate superelliptic curves I: construction of Schottky groups}
\author{Jeffrey Yelton}
\address{\parbox{\linewidth}{Department of Mathematics and Computer Science, Wesleyan University \\ 265 Church Street, Middletown, CT 06459-0128}}
\email{jyelton@wesleyan.edu}
\begin{document}

\maketitle

\begin{abstract}

Let $K$ be a field with a discrete valuation, and let $p$ be a prime.  It is known that if $\Gamma \lhd \Gamma_0 < \PGL_2(K)$ is a Schottky group normally contained in a larger group which is generated by order-$p$ elements each fixing $2$ points $a_i, b_i \in \proj_K^1$, then the quotient of a certain subset of the projective line $\proj_K^1$ by the action of $\Gamma$ can be algebraized as a superelliptic curve $C : y^p = f(x) / K$.  The subset $S \subset K \cup \{\infty\}$ consisting of these pairs $a_i, b_i$ of fixed points is mapped modulo $\Gamma$ to the set of branch points of the superelliptic map $x : C \to \proj_K^1$.  We produce an algorithm for determining whether an input even-cardinality subset $S \subset K \cup \{\infty\}$ consists of fixed points of generators of such a group $\Gamma_0$ and which, in the case of a positive answer, modifies $S$ into a subset $\Smin \subset K \cup \{\infty\}$ with particularly nice properties.  Our results do not involve any restrictions on the prime $p$ or on the residue characteristic of $K$ and allow these to be the same.

\end{abstract}

\section{Introduction} \label{sec 1}

This paper concerns the non-archimedean uniformization of \textit{superelliptic curves} which are $p$-cyclic covers of the projective line for some prime $p$ different from the characteristic of the ground field; such an object is a smooth curve $C$ over a field $K$ of characteristic different from $p$, equipped with a surjective degree-$p$ morphism $C \to \proj_K^1$.  We denote the set of branch points of this surjective morphism by $\mathcal{B}$ and write $d = \#\mathcal{B}$.  One can easily compute from the Riemann-Hurwitz formula that the genus of $C$ is given by $\frac{1}{2}(p - 1)(d - 2)$.  It is also well known that a superelliptic curve which is a degree-$p$ cover of the projective line can be described by an affine model of the form 
\begin{equation} \label{eq superelliptic model}
y^p = \prod_{i = 1}^{d'} (x - z_i)^{r_i} \in K[x],
\end{equation}
 with $d' \in \{d, d - 1\}$, where $1 \leq r_i \leq p - 1$ for $1 \leq i \leq d'$ and $\mathcal{B} = \{z_1, \dots, z_{d'}\}$ (resp. $\mathcal{B} = \{z_1, \dots, z_{d'}, \infty\}$) if $d' = d$ (resp. $d' = d - 1$).  In the special case that $p = 2$, we call $C$ a \textit{hyperelliptic curve}; in this case, our formula for the genus $g$ implies that $d$ is even and that the equation in (\ref{eq superelliptic model}) can be written as $y^2 = f(x) \in K[x]$ for a squarefree polynomial $f$ of degree $2g + 1$ (resp. $2g + 2$) if the degree-$2$ cover $C \to \proj_K^1$ is not (resp. is) branched above $\infty$.
 
\subsection{Superelliptic curves over discrete valuation fields and Schottky groups} \label{sec 1 superelliptic Schottky}

Throughout this paper, we assume that $K$ is a field equipped with a discrete valuation $v : K^\times \to \zz$, and we denote by $\cc_K$ the completion of an algebraic closure of $K$.  We fix a prime $p$ and a primitive $p$th root of unity $\zeta_p \in \cc_K$ and assume throughout that we have $\zeta_p \in K$.  (This will ensure, among other things, that each automorphism of $C / K$ as a cyclic $p$-cover of $\proj_K^1$ is defined over $K$.)  We adopt the convention of using the notation $\proj_K^1$ both for the projective line with its structure as a variety over $K$ and for the set of $K$-points of $\proj_K^1$, \textit{i.e.} in a context that will appear frequently in this paper, we write $\proj_K^1$ for the set $K \cup \{\infty\}$.  We follow a similar convention for the notation $\proj_{\cc_K}^1$.

When $C$ is an elliptic curve, which is the case where $p = 2$ and $g = 1$, it was shown by Tate in \cite{tate1995review} that $C$ can be uniformized as the quotient of $K^{\times}$ by the subgroup generated by a single element $q$ of positive valuation if and only if $C$ has split multiplicative reduction over $K$; more precisely, for some element $q \in K^\times$ satisfying $v(q) = -j(C) > 0$, there is a rigid analytic isomorphism of groups $C(K) \stackrel{\sim}{\to} K^{\times} / \langle q \rangle$.

A generalization of this result has been found for any curve $C$ (not necessarily superelliptic) of any genus $g \geq 1$ by Mumford in \cite{mumford1972analytic}: namely, a curve $C$ of genus $g$ can be realized as a quotient of a certain subset of $\proj_K^1$ by the action of a free subgroup $\Gamma < \PGL_2(K)$ of $g$ generators if and only if the curve $C / K$ satisfies a property called \textit{split degenerate reduction} (see \cite[Definition 6.7]{papikian2013non} or \cite[\S IV.3]{gerritzen2006schottky}).  The free subgroup $\Gamma < \PGL_2(K)$ is called a \textit{Schottky group} (see \Cref{dfn schottky} below); letting $\Omega \subset \proj_K^1$ be the complement in $\proj_K^1$ of the limit points of $\proj_K^1$ under the action of $\Gamma$, the Schottky group $\Gamma$ acts discontinuously on $\Omega$ in the usual way by fractional linear transformations, and the curve $C$ is uniformized as $\Omega / \Gamma$.  This main result on non-archimedean uniformization of curves is given as \cite[Theorem 4.20]{mumford1972analytic} and \cite[Theorems III.2.2, III.2.12.2, and IV.3.10]{gerritzen2006schottky}; many more details are contained in those sources.  We comment that in the special case of $g = 1$, after applying an appropriate automorphism of $\proj_K^1$ we get $\Omega = \proj_K^1 \smallsetminus \{0, \infty\} = K^\times$ and that $\Gamma$ is generated by the fractional linear transformation $z \mapsto qz$ for some element $q \in K^\times$ of positive valuation, and thus we recover the Tate uniformization established in \cite{tate1995review}.

It is shown in \cite[\S9.2]{gerritzen2006schottky} and \cite[\S1]{van1983non} (for the $p = 2$ case) and in \cite[\S2]{van1982galois} (for general $p$) that given a prime $p$ and a split degenerate curve $C / K$ of genus $(p - 1)g$ realized as such a quotient $\Omega / \Gamma$, the curve $C$ is superelliptic and a degree-$p$ cover of $\proj_K^1$ if and only if $\Gamma$ is normally contained in a larger subgroup $\Gamma_0 < \PGL_2(K)$ generated by $g + 1$ elements $s_0, \dots, s_g$ whose only relations are $s_0^p = \dots = s_g^p = 1$.  In this situation, we have $[\Gamma_0 : \Gamma] = p$ and $\Omega / \Gamma_0 \cong \proj_K^1$ so that the natural surjection $\Omega / \Gamma \twoheadrightarrow \Omega / \Gamma_0$ is just the degree-$p$ covering map $C \to \proj_K^1$.  Each order-$p$ element $s_i \in \PGL_2(K)$ fixes exactly $2$ points of $\proj_K^1$, which we denote as $a_i$ and $b_i$.  Writing $S = \{a_0, b_0, \dots, a_g, b_g\} \subset \proj_K^1$, it is easy to verify that the set-theoretic image of $S$ modulo the action of $\Gamma_0$ coincides with the set of branch points $\mathcal{B} \subset \proj_K^1 \cong \Omega / \Gamma_0$.

Conversely, one may start with a subset $S \subset \proj_K^1$ of cardinality $2g + 2$ and try to construct a split degenerate superelliptic curve over $K$ in the following manner.  Suppose that we have labeled the elements of $S$ as $a_0, b_0, \dots, a_g, b_g$.  For $0 \leq i \leq g$, there is an element $s_i \in \PGL_2(K)$ of order $p$, unique up to power (see \Cref{prop order-p elliptic}(a) below), which fixes the points $a_i, b_i \in \proj_K^1$.  We define subgroups $\Gamma < \Gamma_0 < \PGL_2(K)$ as 
\begin{equation} \label{eq Gamma}
\Gamma_0 = \langle s_0, \dots, s_g \rangle, \ \Gamma = \langle \gamma_{i, j} := s_0^{j - 1} s_i s_0^{-j} \rangle_{0 \leq i \leq g, \ 1 \leq j \leq p - 1}.
\end{equation}
It is an elementary group-theoretic exercise to show from these definitions that we have $\Gamma \lhd \Gamma_0$.

If the subgroup $\Gamma$ that we have constructed in this way is Schottky, then letting $\Omega \subset \proj_K^1$ be the complement of the limit points of $\Gamma$ (as in \Cref{dfn limit points} below), the group $\Gamma_0$ is the desired subgroup of $\PGL_2(K)$ such that $[\Gamma_0 : \Gamma] = p$ and $\Omega / \Gamma_0 \cong \proj_K^1$, so that the quotient $\Omega / \Gamma$ can be realized as a split degenerate superelliptic curve $C$ over $K$.

In general, it is not always possible to construct a Schottky group and a superelliptic curve from such a subset $S \subset \proj_K^1$ in this way, as the group $\Gamma < \PGL_2(K)$ defined as in (\ref{eq Gamma}) for order-$p$ elements $s_i \in \PGL_2(K)$ fixing points $a_i, b_i \in S$ as above may turn out not to be Schottky for any labeling of the elements of $S$ as $a_0, b_0, \dots, a_g, b_g$.  As we will show in \Cref{rmk clustered in pairs}(a), a necessary condition on $S$ is that it be \textit{clustered in pairs}\footnote{In fact, when $p$ equals the residue characteristic of $K$, \Cref{rmk clustered in pairs}(a) affirms an additional necessary condition.} as in \Cref{dfn clustered in pairs} below.  In this case there is only one way to label the elements of $S$ as $a_0, b_0, \dots, a_g, b_g$ such the subgroup $\Gamma < \PGL_2(K)$ defined in (\ref{eq Gamma}) may possibly be Schottky.  However, being clustered in pairs is still not a sufficient condition for $S$ to determine a Schottky group in this way: see \Cref{ex clustered in pairs not sufficient} below.

This paper focuses on non-archimedean uniformization of split degenerate superelliptic curves from the point of view of beginning with a subset $S \subset \proj_K^1$ and investigating the subgroups $\Gamma \lhd \Gamma_0 < \PGL_2(K)$ which can be computed from $S$ in the manner described above.  As a convenient reference, below we make precise the relationship between such subsets and such groups, as well as list some related notation that will be used throughout the paper.

\begin{dfn} \label{dfn associated to S}

Let $S \subset \proj_K^1$ be a $(2g + 2)$-element subset for some $g \geq 1$.  Suppose that we have fixed a labeling of the elements of $S$ as $a_0, b_0, \dots, a_g, b_g \in \proj_K^1$ (noting that if $S$ is clustered in pairs as in \Cref{dfn clustered in pairs}, then \Cref{rmk clustered in pairs}(a) says that there is a canonical partitioning of $S$ into cardinality-$2$ subsets consisting of points labeled $a_i, b_i$).  Then we use the notation specified below for the following objects determined by $S$ (and this labeling of its elements):

\begin{itemize}

\item for $0 \leq i \leq g$, a choice of order-$p$ element $s_i \in \PGL_2(K)$ which fixes the points $a_i, b_i \in \proj_K^1$;

\item the subgroups $\Gamma \lhd \Gamma_0 < \PGL_2(K)$ given by (\ref{eq Gamma}) in terms of the above elements $s_i \in \PGL_2(K)$;

\item the axis of the Berkovich projective line (see \S\ref{sec 2 berk} below) which connects the points $\eta_{a_i}, \eta_{b_i} \in \Berk$ of Type I associated to $a_i, b_i \in \proj_K^1$, which we denote by $\Lambda_{(i)} \subset \Berk$;

\item the tubular neighborhood $B(\Lambda_{(i)}, \frac{v(p)}{p - 1}) \subset \Berk$ of the axis $\Lambda_{(i)}$ (see \S\ref{sec 2 berk} below), which we denote by $\hat{\Lambda}_{(i)} \subset \Berk$.

\end{itemize}

We say that the above objects are \emph{associated to} $S$.

\end{dfn}

We are interested in identifying when a particular subgroup $S \subset \proj_K^1$ yields a superelliptic curve, which inspires the following definition.

\begin{dfn} \label{dfn superelliptic set}

We say that a $(2g + 2)$-element subset $S \subset \proj_K^1$ is ($p$-)\emph{superelliptic} if for some labeling of the elements of $S$ as $a_0, b_0, \dots, a_g, b_g$, the following properties hold for the subgroups $\Gamma \lhd \Gamma_0 < \PGL_2(K)$ associated to $S$ (with respect to our fixed choice of prime $p$):

\begin{enumerate}[(i)]
\item the group $\Gamma$ is Schottky; and 
\item the group $\Gamma_0$ cannot be generated by fewer than $g + 1$ elements.
\end{enumerate}

\end{dfn}

A \emph{superelliptic} subset $S \subset \proj_K^1$ is therefore one whose associated subgroup $\Gamma < \PGL_2(K)$ induces a superelliptic curve $C \cong \Omega / \Gamma$ and such that there is no ``redundancy'' among the elements of $S$; this second condition implies that the genus of $C$ equals $(p - 1)g$ via \Cref{rmk free} below.  (We caution that except in the hyperelliptic curve case where $p = 2$, the letter $g$ used to denote $\frac{1}{2}\#S - 1$ therefore does \textit{not} equal the genus of the resulting curve.)

\subsection{Our main results} \label{sec 1 results}
 
Two questions naturally come up concerning the construction of Schottky groups and genus-$g$ split degenerate hyperelliptic curves from a cardinality-$(2g + 2)$ subset $S \subset \proj_K^1$.

\begin{question} \label{question criterion for being superelliptic}

How may we directly determine whether a subset $S \subset \proj_K^1$ is superelliptic?

\end{question}

\begin{question} \label{question GvanderP}

Suppose that we are given a superelliptic subset $S \subset \proj_K^1$.  Define the subset $\Omega \subset \proj_K^1$, the split degenerate hyperelliptic curve $C \cong \Omega / \Gamma$, and the set of branch points $\mathcal{B}$ as above.  Is it possible to determine the distances (under the metric induced by the valuation $v : K^\times \to \zz$) between the elements of $\mathcal{B}$ only from knowing the distances between the elements of $S$, and if so, how are these distances related?

\end{question}

\Cref{question criterion for being superelliptic} is brought up in the case of hyperelliptic curves both on \cite[p. 279]{gerritzen2006schottky} and in Samuel Kadziela's dissertation \cite{kadziela2007rigid}: it is explained in \cite[\S5.3]{kadziela2007rigid} that the methods used in that dissertation (which make heavy use of \textit{good fundamental domains} for Schottky groups) do not in any obvious way lead to an answer to \Cref{question criterion for being superelliptic}.  The main progress on the question since Gerritzen and van der Put's book came out is found in \cite{kadziela2007rigid}.  In this work, Kadziela considered the special case where, after applying a fractional linear transformation to a $(2g + 2)$-element subset $S \subset \proj_K^1$, we may write 
\begin{equation*}
S = \{a_0 := 0, b_0, a_1, b_1, \dots, a_g = 1, b_g := \infty\}
\end{equation*}
 such that we have 
\begin{equation} \label{eq Kadziela's condition}
v(b_0) > v(a_1) \geq v(b_1) \geq v(a_2) \geq \dots \geq v(b_{g - 1}) > 0,
\end{equation}
 and such that the discs $D_{a_i, b_i}, D_{2 - a_i, 2 - b_i} \subset \cc_K$ for $1 \leq i \leq g - 1$ are all mutually disjoint, where $D_{w, w'} := \{z \in K \ | \ v(z - w) \geq v(w' - w)\}$ denotes the smallest disc containing distinct elements $w, w' \in K$.  Using good fundamental domains, Kadziela showed as \cite[Theorem 5.7]{kadziela2007rigid} that such a subset $S$ is always $2$-superelliptic.
 
We resolve this question for a general even-cardinality subset $S \subset \proj_K^1$ in a different way that does not involve good fundamental domains.  Our approach is to define an operation which we call a \emph{folding} (in \Cref{dfn folding}) which turns an even-cardinality subset $S \subset \proj_K^1$ into another such set $S'$ of the same cardinality, to show that $S$ is superelliptic if and only if, after a sequence of well-chosen foldings, one obtains a set $\Smin$ which has a desirable property that we call being \emph{optimal} (see \Cref{dfn optimal}), and to show that only finitely many foldings need to be performed to either find that $S$ is not superelliptic or yield an optimal set $\Smin$.  Our main result, which asserts that all of this can be done, is \Cref{thm criterion for being superelliptic}, and it serves as an answer to \Cref{question criterion for being superelliptic}.  We moreover formulate an algorithm (presented as \Cref{algo folding}) which performs this process: that is, it takes as an input an even-cardinality subset $S \subset \proj_K^1$, determines whether or not $S$ is a superelliptic subset, and, in the case that $S$ is superelliptic, computes an optimal set $\Smin$ obtained by applying finitely many foldings to $S$.

Meanwhile, \Cref{question GvanderP} is inspired by a conjecture posed by Gerritzen and van der Put as \cite[p. 282]{gerritzen2006schottky} for the case of hyperelliptic curves under the assumption that $K$ has residue characteristic different from $2$; it was later recalled by Kadziela as \cite[Conjecture 3.1]{kadziela2007rigid}.  We will deal with this conjecture in a sequel paper, in which we will show that (a much more precise version of) Gerritzen and van der Put's conjecture holds under the assumption that $S$ is optimal.  We will moreover prove such a statement in a much more general setting where $p$ and the residue characteristic of $K$ can be independently chosen to be any pair of primes.  This will provide, among other results, a positive answer to a certain variation of \Cref{question GvanderP}.

\subsection{Outline of the paper} \label{sec 1 outline}

Our strategy in obtaining the results described above which address \Cref{question criterion for being superelliptic} (along with our answer to \Cref{question GvanderP} which is to appear) is to approach the problem through the setting of Berkovich theory, in particular by viewing $\proj_{\cc_K}^1$ as a subset of the Berkovich projective line $\Berk$, and considering the action of certain elements of the projective linear group $\PGL_2(K)$ on it.  With this in mind, we dedicate the following section \S\ref{sec 2} to setting up the necessary framework by first introducing the Berkovich projective line $\Berk$ (in \S\ref{sec 2 berk}), then establishing necessary descriptions of how various elements of $\PGL_2(K)$ act on $\Berk$ (in \S\ref{sec 2 actions}), and then developing the theory of Schottky groups and the properties of Schottky groups which are necessary in order for the resulting curve to be a degree-$p$ cover of the projective line (in \S\ref{sec 2 schottky}).  We approach the discussion in \S\ref{sec 2 schottky} in particular from the point of view of beginning with an even-cardinality subset $S \subset \proj_K^1$ which may potentially be used (if $S$ is superelliptic as in \Cref{dfn superelliptic set}) to construct a Schottky group that yields such a superelliptic curve.  We finish this section with \S\ref{sec 2 clustered in pairs}, which discusses the above-mentioned ``clustered in pairs'' property on even-cardinality subsets $S \subset \proj_K^1$ that is necessary (but generally not sufficient) for $S$ to be superelliptic.

In \S\ref{sec 3}, we proceed to introduce the objects and definitions that will allow us to state and prove our main theorem.  We first (in \S\ref{sec 3 convex hull}) define and discuss the properties of the \emph{reduced convex hull} $\Sigma_{S, 0}$ of an even-cardinality subset $S \subset \proj_K^1$.  Then we define and investigate properties of the above-mentioned \emph{folding} operations which can be performed on sets $S$, using an intuitive description of what foldings do to their reduced convex hulls.  Following this, we open \S\ref{sec 3 optimal} by defining what it means for a set $S$ to be \emph{optimal}, at which point we are ready to state and prove our main result, \Cref{thm criterion for being superelliptic} in this same subsection.  We then (in \S\ref{sec 3 special cases}) proceed to use the theory established in the rest of \S\ref{sec 3} to develop some special cases in which we are more easily able to determine whether a set $S$ is optimal or superelliptic.  Included in this is the aformentioned situation covered by Kadziela: see \Cref{rmk kadziela} below.

The final section \S\ref{sec 4} of this paper provides a practical algorithm for determining whether an even-cardinality subset $S \subset \proj_K^1$ is superelliptic and modifying $S$ through a finite number of foldings into an optimal set $\Smin$ if so.  This algorithm is given in \S\ref{sec 4 algorithm}, in which the procedure involved is explained using the theory developed in the previous section \S\ref{sec 3}: see \Cref{algo folding} and \Cref{thm algorithm} (which asserts that the algorithm works and terminates after a finite number of steps) and its proof there.  Finally, in \S\ref{sec 4 examples} we compute a couple of examples using our algorithm.

\subsection{Acknowledgements}

The author would like to thank Christopher Rasmussen and Joseph Silverman for useful discussions that took place while the author was developing the methods used to address Questions \ref{question criterion for being superelliptic} and \ref{question GvanderP}.

\section{Preliminaries on automorphisms of the Berkovich projective line} \label{sec 2}

Given the completion of an algebraic closure $\cc_K$ of $K$, we write $v : \cc_K \to \rr$ for an extension of the valuation $v : K^\times \to \zz$.  Below when we speak of a \emph{disc} $D \subset \cc_K$, we mean that $D$ is a closed disc with respect to the metric induced by $v : \cc_K \to \rr$; in other words, $D = \{z \in \cc_K \ | \ v(z - c) \geq r\}$ for some center $c \in \cc_K$ and real number $r \in \rr$, which is the \emph{(logarithmic) radius} of $D$.  Given a disc $D \subset \cc_K$, we denote its logarithmic radius by $d(D)$.

\subsection{The Berkovich projective line} \label{sec 2 berk}

The \emph{Berkovich projective line} $\Berk$ over an algebraic closure of a discrete valuation field $K$ is a type of rigid analytification of the projective line $\proj_{\cc_K}^1$ and is typically defined in terms of multiplicative seminorms on $\cc_K[x]$ as in \cite[\S1]{baker2008introduction} and \cite[\S6.1]{benedetto2019dynamics}.  Points of $\Berk$ are identified with multiplicative seminorms which are each classified as Type I, II, III, or IV.  For the purposes of this paper, we may safely ignore points of Type IV and need only adopt a fairly rudimentary construction which does not directly involve seminorms.

\begin{dfn} \label{dfn berk}

Define the \textit{Berkovich projective line}, denoted $\Berk$, to be the topological space with points and topology given as follows.  The points of $\Berk$ are identified with  
\begin{enumerate}[(i)]
\item $\cc_K$-points $z \in \proj_{\cc_K}^1$, which we will call \emph{points of Type I}; and 
\item discs $D \subset \cc_K$; if $d(D) \in \qq$ (resp. $d(D) \notin \qq$), we call this a \emph{point of Type II} (resp. a \emph{point of Type III}).
\end{enumerate}

A point of $\Berk$ which is identified with a point $z \in \proj_{\cc_K}^1$ (resp. a disc $D \subset \cc_K$) is denoted $\eta_z \in \Berk$ (resp. $\eta_D \in \Berk$).

We define an infinite metric on $\Berk$ given by the distance function 
\begin{equation*}
\delta : \Berk \times \Berk \to \rr \cup \{\infty\}
\end{equation*}
defined as follows.  We set $\delta(\eta_z, \eta') = \infty$ for any point $\eta_z$ of Type I and any point $\eta' \neq \eta_z \in \Berk$.  Given a containment $D \subseteq D' \subset \cc_K$ of discs, we set $\delta(\eta_D, \eta_{D'}) = d(D) - d(D') \in \rr$.  More generally, if $D, D' \subset \cc_K$ are discs and $D'' \subset \cc_K$ is the smallest disc containing both $D$ and $D'$, we set 
\begin{equation}
\delta(\eta_D, \eta_{D'}) = \delta(\eta_D, \eta_{D''}) + \delta(\eta_{D'}, \eta_{D''}) = d(D) + d(D') - 2d(D'').
\end{equation}

We endow the subspace of $\Berk$ consisting of points of Type II and III with the topology induced by the metric given by $\delta$, and we extend this to a topology on all of $\Berk$ in the following manner\footnote{This is an extension to all of $\Berk$ of what is known as the \emph{strong topology} on the hyperbolic space $\Berk \smallsetminus \{\eta_z\}_{z \in \cc_K}$.}: for each point $z \in \cc_K \subset \Berk$ of Type I, sets of the form 
\begin{equation*}
U_{D, z, r} := \{\eta_z\} \cup \{\eta \in \Berk \ | \delta(\eta, \eta_{D'}) < r, \ z \in D' \subseteq D\}
\end{equation*}
for $r \in \rr_{> 0}$ and discs $D \subset \cc_K$ containing $z$ are open neighborhoods of $\eta_z \in \Berk$; while sets of the form 
\begin{equation*}
U_{D, \infty, r} := \{\eta_\infty\} \cup \{\eta \in \Berk \ | \ \delta(\eta, \eta_{D'}) < r, \ D' \supseteq D\}
\end{equation*}
for $r \in \rr_{> 0}$ and discs $D \subset \cc_K$ are open neighborhoods of $\eta_\infty$.

\end{dfn}

\begin{rmk} \label{rmk paths in berk}

The space $\Berk$, under the topology given in \Cref{dfn berk}, is path-connected.  Letting $E \subset \cc_K$ be a closed disc or a singleton subset, let us write $\eta_E$ for $\eta_E$ (resp. $\eta_z$) if $E$ is a closed disc (resp. a singleton subset $\{z\}$).  Pick an arbitrary pair of points $\eta, \eta' \in \Berk$, where $\eta = \eta_E$ and $\eta' = \eta_{E'}$ for some $E, E' \subset \cc_K$.  Write $\eta \vee \eta' = \eta_D$, where $D \subset \cc_K$ is the smallest closed disc containing both $E$ and $E'$.  If $E \subset \cc_K$ is a closed disc, define the path $\lambda: [-d(E), -d(D)] \to \Berk$ from $\eta$ to $\eta \vee \eta'$ given by sending $r \in [-d(E), -d(D)]$ to the disc of radius $-r$ which contains $E$.  If $E = \{z\}$ for some $z \in K$, define the path $\lambda: [0, e^{d(D)}] \to \Berk$ from $\eta$ to $\eta \vee \eta'$ given by sending $0$ to $\eta = \eta_z$ and sending $s \in (0, e^{d(D)}]$ to the disc of radius $-\ln(s)$ containing $z$.  If $E = \{\infty\}$, define the path $\lambda: [0, e^{-d(D)}] \to \Berk$ from $\eta$ to $\eta \vee \eta'$ given by sending $0$ to $\eta = \eta_\infty$ and sending $s \in (0, -e^{-d(D)}]$ to the disc of radius $\ln(s)$ containing $D$.  Now define a path $\lambda'$ connecting $\eta$ to $\eta' \vee \eta'$ analogously, and the concatenation of $\lambda$ with the reversal of $\lambda'$ furnishes a path from $\eta$ to $\eta'$.

One can show that the path constructed above is in fact the unique path from $\eta$ to $\eta'$ which is one-to-one (\textit{i.e.} such that there is no backtracking).  This property implies that given any point $\eta \in \Berk$ and subspace $\Lambda \subset \Berk$, there is a unique point on $\Lambda$ which is closest to $\eta$ with respect to the distance function $\delta$.  Similarly, given two subspaces $\Lambda, \Lambda' \subset \Berk$, there are unique points $\eta \in \Lambda$ and $\eta' \in \Lambda'$ which are maximally close to each other.  This observation yields a well-defined notion of distance from a point to a subspace as well as the distance between two subspaces of $\Berk$.

\end{rmk}

The above definition and observations allow us to set the following notation.  Below we denote the image in $\Berk$ of the (unique) shortest path between two points $\eta, \eta' \in \Berk$, as defined in the above remark, by $[\eta, \eta'] \subset \Berk$, and we will often refer to this image itself as ``the (shortest) path'' from $\eta$ to $\eta'$; note that with this notation we have $[\eta, \eta'] = [\eta', \eta]$.  Given points $\eta, \eta', \eta'' \in \Berk$, we may speak of the \emph{concatination} of the path $[\eta, \eta']$ with $[\eta', \eta'']$ as simply their union, which is (the image of) a path from $\eta$ to $\eta''$ that coincides with $[\eta, \eta''] \subset \Berk$ if and only if we have $[\eta, \eta'] \cap [\eta', \eta''] = \{\eta'\}$, or in other words, if ``there is no backtracking''.

We refer to the path between distinct points $\eta_a, \eta_b \in \Berk$ of Type I as an \emph{axis} and will sometimes denote the axis connecting them by $\Lambda_{a, b} := [\eta_a, \eta_b] \subset \Berk$.

In light of \Cref{rmk paths in berk}, given a point $\eta \in \Berk$ and subspaces $\Lambda, \Lambda' \in \Berk$, we write $\delta(\eta, \Lambda)$ and $\delta(\Lambda, \Lambda')$ respectively for the distances between $\eta$ and (the closest point to $\eta$ on) $\Lambda'$ and between (the closest points on) $\Lambda$ and $\Lambda'$.

We will also often need to deal with tubular neighborhoods of subspaces of $\Berk$, and so we introduce the following definition and notation.

\begin{dfn} \label{dfn tubular neighborhood}

Given a subspace $\Lambda \subset \Berk$ and a real number $r > 0$, we define the \emph{(closed) tubular neighborhood} of $\Lambda$ of radius $r$ to be 
\begin{equation} \label{eq tubular neighborhood}
B(\Lambda, r) = \{\eta \in \Berk \ | \ \delta(\eta, \Lambda) \leq r\}.
\end{equation}

\end{dfn}

\subsection{Fractional linear transformations} \label{sec 2 actions}

There is a well-known action of the projective linear group $\PGL_2(K) = \GL_2(K) / K^\times$ on the projective line $\proj_K^1$ given by 
\begin{equation}
\begin{bmatrix} a & b \\ c & d \end{bmatrix}: z \mapsto \frac{az + b}{cz + d},
\end{equation}
where $\big[\begin{smallmatrix} a & b \\ c & d \end{smallmatrix}\big] \in \GL_2(K)$ is the matrix representing some element in $\PGL_2(K)$.  Given the topology on $\proj_K^1$ generated by the open discs in $K$ as well as the subsets of the form $\{z \in K \ | \ v(z) > r\} \cup \{\infty\} \subset \proj_K^1$, it is straightforward to see that each element of $\PGL_2(K)$ acts as a self-homeomorphism on $\proj_K^1$.  In fact, the image of any closed (resp. open) disc in $K$ under the action of an element in $\PGL_2(K)$ is either a closed (resp. open) disc in $K$ or is the complement in $\proj_K^1$ of an open (resp. closed) disc in $K$.

Given any automorphism of $\proj_K^1$, there is a natural way to define its action on $\Berk$: see \cite[\S1.5, \S2.1]{baker2008introduction}, \cite[\S2.3]{baker2010potential}, \cite[\S7.1]{benedetto2019dynamics}, or \cite[\S II.1.3]{poineau2020berkovich}.  We describe concretely how each automorphism in $\PGL_2(K)$ acts on the points of $\Berk$ as well the properties of $\Berk$ that it respects.

\begin{prop} \label{prop pgl2 action on berk}

Any automorphism $\gamma \in \PGL_2(K)$ acts on $\Berk$ by sending points of Type I to points of Type I via its usual action on the points of $\proj_{\cc_K}^1$ and by sending points of Type II (resp. III) to points of Type II (resp. III) in the following manner.  If $\eta_D \in \Berk$ is a point of Type II or III corresponding to a (closed) disc $D \subset \cc_K$, we have $\gamma(\eta_D) = \eta_{E}$, where $E = \gamma(D)$ if $\infty \notin \gamma(D)$ and $E$ is the smallest closed disc containing $K \smallsetminus \gamma(D)$ if $\infty \in \gamma(D)$.

Each automorphism in $\PGL_2(K)$ acts as a metric-preserving self-homeomorphism on $\Berk$.

\end{prop}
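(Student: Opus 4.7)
The plan is to reduce the claim to a set of generators of $\PGL_2(K)$. Since $\PGL_2(K)$ is generated by the affine transformations $\gamma_{a,b} : z \mapsto az + b$ (with $a \in K^\times$, $b \in K$) together with the inversion $\iota : z \mapsto 1/z$, and since all the stated properties (preservation of Type, correctness of the disc-image formula, isometry, and being a self-homeomorphism) are closed under composition, it suffices to verify them on these generators. I would first note that the action on points of Types II and III is well-defined on $\PGL_2(K) = \GL_2(K)/K^\times$: a scalar multiple of a matrix induces the same map on $\proj_{\bar K}^1$ and hence carries a disc $D \subset \bar K$ to the same subset of $\proj_{\bar K}^1$.

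For $\gamma_{a,b}$, a direct computation shows that $\gamma_{a,b}$ sends the closed disc $D = \{z \in \bar K \ | \ v(z-c) \geq r\}$ to the closed disc of logarithmic radius $r + v(a)$ centered at $ac + b$; rationality of the radius is preserved, so Types II and III are preserved, and since $\infty \notin \gamma_{a,b}(D)$ the formula in the proposition is vacuously correct. For $\iota$, I would split into two cases according to whether $0 \in D$ (equivalently, $v(c) \geq r$): in the first case, using $v(z) = v(c)$ for all $z \in D$ (by the ultrametric inequality), one gets $\iota(D) = \{w : v(w - 1/c) \geq r - 2v(c)\}$, a closed disc in $\bar K$; in the second case one gets $\iota(D) = \{\infty\} \cup \{w : v(w) \leq -r\}$, so that $\overline{\bar K \smallsetminus \iota(D)}$ is the closed disc of logarithmic radius $-r$ centered at $0$. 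In both cases rationality of the radius is preserved, matching the formula in the statement.

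To prove that $\delta$ is preserved, I would invoke its defining formula $\delta(\eta_D, \eta_{D'}) = d(D) + d(D') - 2d(D'')$, where $D''$ is the smallest disc containing $D \cup D'$. For $\gamma_{a,b}$, all logarithmic radii shift uniformly by $v(a)$ and the smallest-enclosing-disc relation is preserved, so the three shifts cancel out. For $\iota$, I would do a case analysis according to which of $D$, $D'$, $D''$ contain $0$; in each case, the explicit radii computed above can be substituted into the formula and shown to agree with the original. The most delicate subcase occurs when $D''$ contains $0$ but one or both of $D, D'$ does not, since then the ``type of representation'' of the image switches between the two formulas of the proposition; careful bookkeeping with the relation $v(c) = v(c')$ whenever neither $D$ nor $D'$ contains $0$ and $D \subseteq D'$ makes this consistent.

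Finally, distances from Type I points are $\infty$ by definition and are trivially preserved since $\gamma$ permutes Type I points. Continuity at Type I points follows from the explicit neighborhood basis $U_{D, z, r}$ of Definition 2.1, together with the fact that $\gamma$ already acts as a self-homeomorphism on the Type I locus $\proj_{\bar K}^1$ and preserves $\delta$ on the Type II/III locus; since $\gamma^{-1} \in \PGL_2(K)$ as well, $\gamma$ is a self-homeomorphism of $\Berk$. The main obstacle is the case analysis for inversion in the isometry argument, specifically the subcase where $D''$ contains $0$ but $D$ or $D'$ does not; all the other verifications are routine, mostly mechanical calculations with the ultrametric inequality.
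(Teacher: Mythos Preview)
Your approach is correct but takes a genuinely different route from the paper's. The paper gives essentially no argument of its own: it defers to the seminorm definition of $\Berk$ and cites the literature (Baker's introduction and Poineau--Turchetti for the action and the disc-image formula; Baker--Rumely, Proposition~2.30, for metric preservation). You instead work entirely within the explicit disc model of the paper's Definition~\ref{dfn berk}, reducing to the generating set $\{z \mapsto az+b,\ z \mapsto 1/z\}$ of $\PGL_2(K)$ and checking each claim by direct ultrametric computation. Your route is more self-contained relative to the paper's own setup (which deliberately avoids seminorms) and makes all the formulas explicit; the paper's route is far shorter but requires the reader to translate between the seminorm picture and the disc model, and indeed the paper concedes that the formula in the case $\infty \in \gamma(D)$ is not stated in the references it cites. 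One small point worth tightening: your claim that the disc-image formula is ``closed under composition'' is precisely what licenses reducing that part of the statement to generators, but this itself needs a short check (tracking which branch of the formula applies as $\infty$ enters or leaves the image under successive generators); it is routine but not automatic, so you should spell it out alongside the inversion case analysis you already flag.
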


\begin{proof}

The claims about how an automorphism $\gamma \in \PGL_2(K)$ acts on points of $\Berk$ can be deduced from \cite[Proposition 7.6, Theorem 7.12]{benedetto2019dynamics} is more or less proved in \cite[\S II.1.3]{poineau2020berkovich} (although the formula for $E$ in the case that $\infty \in \gamma(D)$ is not explicitly stated there).  The fact that $\gamma$ respects the metric of $\Berk$ is \cite[Proposition 2.30]{baker2010potential}.
\end{proof}

Eigenspaces of a matrix in $\GL_2(K)$ which represents an element of $\PGL_2(K)$ correspond to fixed points of $\proj_{\cc_K}^1$, and as a result, each nontrivial element of $\PGL_2(K)$ fixes exactly $1$ or exactly $2$ points in $\proj_{\cc_K}^1$.  We recall the following classification of elements of $\PGL_2(K)$.

\begin{dfn} \label{dfn loxodromic}

Given an element $\gamma \in \PGL_2(K)$ and a lifting to an element $\tilde{\gamma} \in \GL_2(K)$, we say that $\gamma$ is \emph{parabolic} if $\tilde{\gamma}$ has only $1$ eigenvalue (equivalently, $\gamma$ fixes exactly $1$ point of $\proj_{\cc_K}^1$); we say that $\gamma$ is \emph{elliptic} if $\tilde{\gamma}$ has $2$ distinct eigenvalues of the same valuation; and we say that $\gamma$ is \emph{loxodromic} if $\tilde{\gamma}$ has $2$ eigenvalues with distinct valuations.

\end{dfn}

We note that since the ratio of eigenvalues of an element of $\GL_2(K)$ is preserved by multiplication by a scalar matrix, the above properties are well defined for an element of $\PGL_2(K)$.  We now describe the action of an automorphism in $\PGL_2(K)$ of each of the above three types on $\Berk$.

\begin{prop} \label{prop loxodromic action on berk}

Let $\gamma \in \PGL_2(K)$ be an automorphism.  Given distinct points $a, b \in \proj_{\cc_K}^1$, we denote the axis connecting the corresponding points $\eta_a, \eta_b \in \Berk$ by $\Lambda_{a, b}$.

\begin{enumerate}[(a)]

\item Suppose that $\gamma$ is parabolic, and write $z \in \proj_{\cc_K}^1$ for its unique fixed point.  Then there is a point $\eta$ of Type II such that $\gamma$ fixes each point in $[\eta_z, \eta] \subset \Berk$.

\item Suppose that $\gamma$ is elliptic or loxodromic.  Here and below, write $a, b \in \proj_{\cc_K}^1$ for its fixed points, and write $u_\gamma \in \cc_K^\times$ for the ratio between the eigenvalues of a representative of $\gamma$ in $\GL_2(K)$, ordered so that $u_\gamma$ is integral.  Then we have $\gamma(\Lambda_{a, b}) = \Lambda_{a, b}$ and $\delta(\eta, \gamma(\eta)) = v(u_\gamma) \geq 0$ for any point $\eta \in \Lambda_{a, b} \smallsetminus \{\eta_a, \eta_b\}$.  If $\gamma$ is loxodromic (so that $v(u_\gamma) > 0$), it acts on each point in $\Lambda_{a, b} \smallsetminus \{\eta_a, \eta_b\}$ by moving it a distance of $v(u_\gamma)$ in a fixed direction on the axis.

\item If $\gamma$ is elliptic, the set of fixed points of $\gamma$ in $\Berk$ coincides with $B(\Lambda_{a, b}, v(u_\gamma - 1)) \subset \Berk$.

\item If $\gamma$ is elliptic (resp. loxodromic) and $\eta \in \Berk$ is any point of Type II or III, let $\xi \in \Berk$ be the closest point in $B(\Lambda_{a, b}, v(u_\gamma - 1))$ (resp. on $\Lambda_{a, b}$) to $\eta$.  The path $[\eta, \gamma(\eta)] \subset \Berk$ is the concatination of the paths $[\eta, \xi]$, $[\xi, \gamma(\xi)]$, and $[\gamma(\xi), \gamma(\eta)]$ (in the elliptic case the path $[\xi, \gamma(\xi)]$ contains just the point $\xi = \gamma(\xi) \in B(\Lambda_{a, b}, v(u_\gamma - 1))$); in other words, there is no backtracking on this concatenation of paths.  Moreover, the automorphism $\gamma$ maps the path $[\eta, \xi]$ homeomorphically onto the path $[\gamma(\eta), \gamma(\xi)]$.

\end{enumerate}

\end{prop}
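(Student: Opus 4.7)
The plan is to reduce each case to a normal form by $\PGL_2(K)$-conjugation and then read off the action on Type II/III points directly from the disc description in \Cref{dfn berk} together with the formula in \Cref{prop pgl2 action on berk}.

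For (a), I would conjugate $\gamma$ so that its unique fixed point becomes $\infty$, turning $\gamma$ into a translation $z \mapsto z + c$ for some $c \in K$. From the formula for the action on discs in \Cref{prop pgl2 action on berk}, a disc $D \subset \bar{K}$ of logarithmic radius $r$ is preserved exactly when $v(c) \geq r$.  Thus any Type II point $\eta$ corresponding to a disc of logarithmic radius exactly $v(c)$ is fixed, and every disc encountered along the path from $\eta_\infty$ to $\eta$ has logarithmic radius at most $v(c)$ and is therefore also fixed.

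For (b) and (c), I would conjugate so that the two fixed points of $\gamma$ become $0$ and $\infty$, so that $\gamma : z \mapsto u_\gamma z$.  The axis $\Lambda_{0, \infty}$ is then exactly the collection of points $\eta_D$ for discs $D = \{v(z) \geq r\}$, and $\gamma$ sends such a $D$ to $\{v(z) \geq r + v(u_\gamma)\}$.  Part (b) now reads off: the axis is preserved, each point on it is displaced along it by $v(u_\gamma)$, and in the loxodromic case the direction is constant.  For (c), a direct computation shows that an arbitrary disc $D = \{v(z - c) \geq r\}$ satisfies $\gamma(D) = D$ if and only if $v(c - u_\gamma c) \geq r$, i.e.\ iff $r \leq v(c) + v(u_\gamma - 1)$; identifying the closest point of $\Lambda_{0, \infty}$ to $\eta_D$ (namely $\eta_{\{v(z) \geq v(c)\}}$ when $r \geq v(c)$) then shows this is equivalent to $\delta(\eta_D, \Lambda_{0, \infty}) \leq v(u_\gamma - 1)$.

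For (d), I would invoke \Cref{prop pgl2 action on berk} to the effect that $\gamma$ is a metric-preserving self-homeomorphism of $\Berk$ and therefore carries the path $[\eta, \xi]$ isometrically onto $[\gamma(\eta), \gamma(\xi)]$.  The concatenation of $[\eta, \xi]$, $[\xi, \gamma(\xi)]$, and $[\gamma(\xi), \gamma(\eta)]$ is then a path from $\eta$ to $\gamma(\eta)$ which by the unique path-connectedness of $\Berk$ from \Cref{rmk paths in berk} must coincide with $[\eta, \gamma(\eta)]$ provided there is no backtracking at $\xi$ or $\gamma(\xi)$.  I expect the main obstacle to be verifying precisely this non-backtracking property.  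In the loxodromic case, at $\xi$ the tangent direction toward $\eta$ is ``off the axis'' (since $\xi$ is the unique closest axis point to $\eta$) while the tangent direction toward $\gamma(\xi)$ lies inside $\Lambda_{a, b}$, so the two directions differ; the situation at $\gamma(\xi)$ is symmetric.  In the elliptic case $\xi = \gamma(\xi)$ is fixed, and the argument instead must show that $\gamma$ cannot preserve the tangent direction at $\xi$ pointing toward $\eta$: any initial germ of that direction consists of points outside the fixed set $B(\Lambda_{a, b}, v(u_\gamma - 1))$ described in (c), whereas preservation of the direction by $\gamma$ would force the germ to consist of further fixed points, a contradiction.
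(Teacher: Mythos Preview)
Your proposal is correct and follows essentially the same approach as the paper: conjugate to normal form ($z \mapsto z + c$ for the parabolic case, $z \mapsto u_\gamma z$ for the elliptic/loxodromic cases), read off parts (a)--(c) from the explicit action on discs, and derive (d) from the metric-preserving property together with unique path-connectedness. The only cosmetic difference is in the non-backtracking step of (d) in the elliptic case: the paper argues that if $[\eta, \xi]$ and $[\gamma(\eta), \xi]$ overlapped beyond $\xi$, the isometry $\gamma$ (which fixes $\xi$ and maps one path to the other) would fix a nontrivial segment, contradicting (c); your tangent-direction formulation is the germ version of the same observation.
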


\begin{proof}

Let $\gamma \in \PGL_2(K)$ be a parabolic element.  After applying an appropriate automorphism to $\Berk$ and conjugating $\gamma$ by that automorphism, we may assume that $\gamma$ is of the form $z \mapsto z + b$ for some $b \in K$.  Then it is clear that the only point of Type I fixed by $\gamma$ is $\eta_\infty \in \Berk$, and that $\gamma$ fixes $\eta_D \in \Berk$ where $D \subset \cc_K$ is a closed disc if and only if we have $v(b) \geq d(D)$.  Part (a) follows.

Now let $\gamma \in \PGL_2(K)$ be an elliptic or a loxodromic element.  Then there is an automorphism of $\Berk$ which moves $a$ to $0$ and $b$ to $\infty$ and which diagonalizes $\gamma$ in such a way that we may assume, after applying this automorphism, that $\gamma$ acts as $z \mapsto u_\gamma z$.  Then, given any closed disc $D \subset \cc_K$, we have $\gamma(\eta_D) = \eta_D$ if and only if we have $\gamma(D) = u_\gamma(D) = D$.  In the elliptic case, we have $v(u_\gamma) = 0$, and then it is easy to see that this in turn is equivalent to saying that $v(u_\gamma - 1) + v(z) = v((u_\gamma - 1)z) \geq d(D)$ for any element $z \in D$ of minimal valuation.  This is the same as saying that the closed disc $D' \subset \cc_K$ containing $D$ with radius $d(D') = d(D) - v(u_\gamma - 1)$ contains $0$, which means that $\delta(\eta_{D'}, \eta_D) \leq v(u - 1)$.  This proves part (c).  Meanwhile, in the loxodromic case, the axis from $\infty$ to $0$ is given by $\Lambda_{\infty, 0} = \{\eta_\infty\} \cup \{\eta_D \ | \ 0 \in D\} \cup \{\eta_0\}$, and the fact that $\gamma$ moves each point a distance of $v(u_\gamma) > 0$ along this path in the direction of $0$ follows from our above description of open neighborhoods of $\eta_0$ and $\eta_\infty$ and proves part (b), while in the elliptic case we have $v(u_\gamma) = 0$ and part (b) follows trivially.  (See \cite[\S II.1.4]{poineau2020berkovich} for more details on the loxodromic case; see also \cite[Remark II.1.12]{poineau2020berkovich} with regard to the parabolic and elliptic cases.)

To prove part (d), suppose that $\gamma$ is elliptic (resp. loxodromic) and assume the notation in the desired statement, so that the closest point in $B(\Lambda_{a, b}, v(u_\gamma - 1))$ (resp. $\Lambda_{a, b}$) to $\gamma(\eta)$ is $\gamma(\xi)$ as the action of the automorphism $\gamma$ on $\Berk$ is metric-preserving by \Cref{prop pgl2 action on berk}.  Now given any point $\eta' \in [\eta, \xi]$, we have by the metric-preserving property that $\delta(\gamma(\eta'), \gamma(\eta)) = \delta(\eta', \eta)$ and $\delta(\gamma(\eta'), \gamma(\xi)) = \delta(\eta', \xi)$; by the uniqueness of (non-backtracking) paths (see \Cref{rmk paths in berk}), we must have $\gamma(\eta') \in [\gamma(\eta), \gamma(\xi)]$.  The fact that $\gamma$ maps $[\eta, \xi]$ homeomorphically onto $[\gamma(\eta), \gamma(\xi)]$ immediately follows.

In the case that $\gamma$ is elliptic, we have $\gamma(\xi) = \xi$ and that $\gamma$ does not fix any point other than $\xi$ on the paths $[\eta, \xi]$ and $[\gamma(\eta), \gamma(\xi) = \xi]$ since the intersection of each path with $B(\Lambda_{a, b}, v(u_\gamma - 1))$ coincides with $\{\xi\}$.  We now see from the uniqueness of non-backtracking paths that the intersection $[\eta, \xi] \cap [\gamma(\eta), \xi]$ must coincide with $\{\xi\}$.  Thus, the concatination of the paths $[\eta, \xi]$, $[\xi, \gamma(\xi)] = \{\xi\}$, and $[\gamma(\xi), \gamma(\eta)]$ has no backtracking, and part (d) is proved in this case.  The statement of (d) for a loxodromic automorphism $\gamma$ follows from a similar exercise using \Cref{prop pgl2 action on berk}.
\end{proof}

\begin{cor} \label{cor loxodromic doesn't fix points}

A nontrivial automorphism $\gamma \in \PGL_2(K)$ is loxodromic if and only if it does not fix any point in $\Berk$ of Type II or III.

\end{cor}

\begin{proof}

This follows immediately from \Cref{prop loxodromic action on berk}(a)(b)(c)(d).
\end{proof}

As we will encounter elements of $\PGL_2(K)$ of order $p$ in the next subsection, the following proposition will be useful.

\begin{prop} \label{prop order-p elliptic}

Let $s \in \PGL_2(K)$ be an element of (finite) order $n$.

\begin{enumerate}[(a)]

\item The automorphism $s$ is elliptic and therefore fixes exactly $2$ points $a, b \in \proj_{\cc_K}^1$.  Conversely, any other order-$n$ element of $\PGL_2(K)$ which fixes the points $a, b \in \proj_K^1$ is a power of $s$.

\item If $n = p$ is prime, then the subset of $\Berk$ fixed by $s$ coincides with the tubular neighborhood $B(\Lambda_{a, b}, \frac{v(p)}{p - 1})$, where $\Lambda_{a, b}$ is the axis $[\eta_a, \eta_b] \subset \Berk$.  In particular, if the residue characteristic of $K$ is not $p$, the subset of $\Berk$ fixed by $s$ coincides with the axis $\Lambda_{a, b}$.

\end{enumerate}

\end{prop}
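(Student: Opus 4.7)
My plan is to tackle the two parts in sequence, using \Cref{prop loxodromic action on berk} as the main tool.

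For part (a), I would first rule out the parabolic and loxodromic cases. After conjugating by a suitable element of $\PGL_2(K)$, any parabolic automorphism takes the form $z \mapsto z + b$ with $b \neq 0$, which has infinite order. A loxodromic element moves each point of its axis $\Lambda_{a, b}$ a distance of $v(u_\gamma) > 0$ along the axis by \Cref{prop loxodromic action on berk}(b), so all of its positive powers also act nontrivially. Hence $s$ must be elliptic, and by \Cref{dfn loxodromic} it fixes exactly two points $a, b \in \proj_K^1$. For the uniqueness-up-to-power statement, I would conjugate by the element of $\PGL_2(K)$ sending $a, b$ to $0, \infty$ (available because $a, b \in \proj_K^1$) so that $s$ acts as $z \mapsto u z$ for some $u \in K^\times$. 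An element of this form has order exactly $n$ in $\PGL_2(K)$ if and only if $u$ is a primitive $n$th root of unity in $K$. Any other order-$n$ element of $\PGL_2(K)$ that fixes each of $a$ and $b$ is, after the same conjugation, of the form $z \mapsto u' z$ with $u'$ another primitive $n$th root of unity, so $u' = u^k$ for some $k$ coprime to $n$, giving the desired identity $s' = s^k$.

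For part (b), the same conjugation reduces to $s: z \mapsto \zeta z$ for $\zeta$ a primitive $p$th root of unity in $K$ (which exists by the standing hypothesis $\zeta_p \in K$). \Cref{prop loxodromic action on berk}(c) immediately identifies the fixed set of $s$ in $\Berk$ with $B(\Lambda_{0, \infty}, v(\zeta - 1))$, so the result reduces to the valuation identity $v(\zeta - 1) = v(p)/(p - 1)$. My approach here is to use the cyclotomic factorization $X^{p - 1} + X^{p - 2} + \cdots + 1 = \prod_{k = 1}^{p - 1}(X - \zeta^k)$; evaluating at $X = 1$ yields $p = \prod_{k = 1}^{p - 1}(1 - \zeta^k)$. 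Writing $1 - \zeta^k = (1 - \zeta)(1 + \zeta + \cdots + \zeta^{k - 1})$, I would argue that each of the second factors is a unit: when the residue characteristic equals $p$, the element $\zeta$ reduces to $1$, so the factor reduces to $k \bmod p$, which is nonzero for $1 \leq k \leq p - 1$; when the residue characteristic is different from $p$, both $p$ and $\zeta - 1$ are units and the identity is trivial. Taking valuations then gives $v(p) = (p - 1) v(\zeta - 1)$. The final assertion about residue characteristic different from $p$ follows because $B(\Lambda_{a, b}, 0) = \Lambda_{a, b}$, which is in turn a consequence of the unique path-connectedness of $\Berk$ noted in \Cref{rmk paths in berk}.

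The main obstacle is technical rather than conceptual: it is the valuation identity $v(\zeta - 1) = v(p)/(p - 1)$, which needs a careful split by the residue characteristic and relies on the observation that the partial sums $1 + \zeta + \cdots + \zeta^{k - 1}$ are units. Once that calculation is in hand, both parts of the proposition assemble routinely from the elliptic case of \Cref{prop loxodromic action on berk} and the diagonalization of $s$ over its two fixed $K$-rational points.
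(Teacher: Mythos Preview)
Your proposal is correct and follows the same approach as the paper: diagonalize $s$ to $z \mapsto u z$, identify $u$ as a root of unity to handle part (a), and invoke \Cref{prop loxodromic action on berk}(c) together with the identity $v(\zeta_p - 1) = \frac{v(p)}{p-1}$ for part (b). Your treatment is in fact more complete than the paper's, which skips the explicit elimination of the parabolic and loxodromic cases in (a) and simply cites the valuation identity as ``well-known'' rather than deriving it from the cyclotomic factorization as you do.
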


\begin{proof}

We see from the proof of \Cref{prop loxodromic action on berk}(a)(b)(c) that $\gamma$ must be elliptic and that we may assume that $\gamma$ acts as $z \mapsto u_\gamma z$.  Since $\gamma$ is of order $n$, we must have $u_\gamma^n = 1$, and therefore the set of all automorphisms $\gamma$ of order dividing $n$ which fix any particular two points of $\proj_K^1$ forms a cyclic subgroup of $\PGL_2(K)$ of order $n$, proving part (a).

If $n = p$ is prime, we have that $u_\gamma$ is a primitive $p$th root of unity, and the statement of part (b) follows from \Cref{prop loxodromic action on berk}(b) combined with the well-known fact that $v(\zeta_p - 1) = \frac{v(p)}{p - 1}$.
\end{proof}

\subsection{Schottky and Whittaker groups} \label{sec 2 schottky}

We begin this subsection by setting up the notation we need to discuss the action of particular subgroups of $\PGL_2(K)$ on $\Berk$.

\begin{dfn} \label{dfn limit points}

Given a subgroup $\Gamma < \PGL_2(K)$, the subset of \emph{limit points} of $\Gamma$ consists of all points $z \in \proj_K^1$ such that there exists a point $z_0 \in \proj_K^1$ and a sequence $\{\gamma_i\}_{i \geq 1} \subset \Gamma$ of distinct elements such that $\displaystyle \lim_{i \to \infty} \gamma_i(z_0) = z$ (where the limit is defined with respect to the metric on $\proj_{\cc_K}^1$ induced by the valuation $v : \cc_K \to \rr$).  We write $\mathcal{L}_\Gamma \subseteq \proj_K^1$ and $\Omega_\Gamma := \proj_K^1 \smallsetminus \mathcal{L}_\Gamma \subset \proj_K^1$ for the subset of limit points of $\Gamma$ and its complement respectively.  

For convenience, we will usually suppress the subscript $\Gamma$ in the notation $\Omega_\Gamma$ and simply write $\Omega \subset \proj_K^1$ for the set of non-limit points of a subgroup $\Gamma < \PGL_2(K)$ that we are working with.

\end{dfn}

\begin{rmk} \label{rmk non-loxodromic limit points}

If a subgroup $\Gamma < \PGL_2(K)$ contains a non-loxodromic element of infinite order, then we have $\mathcal{L}_\Gamma = \proj_K^1$; in other words, every point on the projective line is a limit point.  Indeed, if $\gamma \in \PGL_2(K)$ is non-loxodromic and has infinite order, after applying an appropriate automorphism as in the proof of \Cref{prop loxodromic action on berk}(a)(b), we may assume that $\gamma$ is of the form $z \mapsto z + b$ for $b \in K$ or $z \mapsto uz$ for $u \in K^\times$ of infinite order with $v(u) = 0$.  Then given any $z \in \proj_K^1$, it is elementary to show that there is a subsequence of $\{\gamma^i(z)\}_{i \geq 1}$ which converges to $z$, and thus we have $z \in \mathcal{L}_\Gamma$.

\end{rmk}

There are several equivalent definitions of a Schottky group (in the non-archimedean setting) that can be found in the literature.  For our purposes, the most directly useful one will be the following definition.

\begin{dfn} \label{dfn schottky}

A finitely-generated subgroup $\Gamma < \PGL_2(K)$ is said to be \emph{Schottky} if every nontrivial element of $\Gamma$ is loxodromic.

\end{dfn}

It is shown in \cite[\S I.4.1.3]{gerritzen2006schottky} using \textit{good fundamental domains} that the set of limit points of a Schottky group $\Gamma$ is strictly contained in $\proj_K^1$, and so we have $\Omega_\Gamma \neq \varnothing$ in this case (in other words, the action of $\Gamma$ on $\proj_K^1$ is \textit{discontinuous}).  The fact that $\Gamma$ acts on a nonempty subset of $\proj_K^1$ discontinuously is in fact what enables us to speak of a well-behaved quotient with respect to the action of $\Gamma$.  It is moreover shown in \cite[\S I.3]{gerritzen2006schottky} that a Schottky group is free.  We will independently recover the latter fact in the superelliptic case that we are interested in: see \Cref{rmk free} below.

As was summarized in \S\ref{sec 1 superelliptic Schottky}, given a Schottky group $\Gamma < \PGL_2(K)$ generated by $g$ elements, the corresponding quotient $\Omega / \Gamma$ can be realized as a split degenerate curve over $K$ of genus $g$.  Moreover, this curve is superelliptic and a degree-$p$ cover of $\proj_K^1$ if and only if there is a larger subgroup $\Gamma_0 < \PGL_2(K)$ generated by $g + 1$ elements $s_0, \dots, s_g$ each of order $p$, containing $\Gamma$ as a normal subgroup with index $p$, and such that $\Gamma$ is generated by the elements $\gamma_{i, j} := s_0^{j - 1} s_i s_0^{-j}$ for $0 \leq i \leq g$ and $1 \leq j \leq p - 1$ as in (\ref{eq Gamma}); see \cite[\S2]{van1982galois} for proofs of these facts and more details.  We call such a subgroup $\Gamma_0 < \PGL_2(K)$ a $p$-\emph{Whittaker group}.

\begin{rmk} \label{rmk Whittaker}

To the best of the author's knowledge, \textit{Whittaker groups} are defined only in the hyperelliptic case (\textit{i.e.} when $p = 2$) and then are defined in more than one way in the literature.  In some of the literature, such as \cite{gerritzen2006schottky}, a Whittaker group is defined in this situation to be the Schottky group $\Gamma$ itself rather than the group $\Gamma_0$.  Here we are choosing to more closely follow \cite[Definition 1.5]{van1983non}, which defines $\Gamma_0$ as a Whittaker group in the $p = 2$ situation; we extend the definition to include the superelliptic case where $p$ may be any prime by instead using the term $p$-\emph{Whittaker group}.

\end{rmk}

\subsection{Clusters and sets which are clustered in pairs} \label{sec 2 clustered in pairs}

As we have $\zeta_p \in K$, the fixed points in $\proj_{\cc_K}^1$ of each generator $s_i$ of a $p$-Whittaker group actually lie in $\proj_K^1$.  Conversely, setting $S \subset \proj_K^1$ be a $(2g + 2)$-element subset for some $g \geq 1$ with elements labeled as $a_0, b_0, \dots, a_g, b_g \in \proj_K^1$, with associated elements $s_i \in \PGL_2(K)$ and subgroups $\Gamma \lhd \Gamma_0 < \PGL_2(K)$ as in \Cref{dfn associated to S}, our main goal in this paper is to find a way to detect directly from $S$ whether the group $\Gamma_0$ is a $p$-Whittaker group.  This subsection focuses on a necessary (but generally not sufficient) property that must hold for $S$ in order for this to be the case.

From now on, again following \Cref{dfn associated to S}, we write $\Lambda_{(i)}$ for the axis $[\eta_{a_i}, \eta_{b_i}] \in \Berk$ connecting the points of Type I corresponding to the fixed points $a_i, b_i$.  For $0 \leq i \leq g$, \Cref{prop loxodromic action on berk}(b) and \Cref{prop order-p elliptic} show that the subset of $\Berk$ which is fixed pointwise under $s_i$ coincides with 
\begin{equation*}
\hat{\Lambda}_{(i)} := B(\Lambda_{(i)}, \tfrac{v(p)}{p - 1}) \subset \Berk.
\end{equation*}
Note that each subspace $\hat{\Lambda}_{(i)} \subset \Berk$ of points fixed by $s_i$ contains the corresponding axis $\Lambda_{(i)}$, and that the containment is an equality if and only if the residue characteristic of $K$ is not $p$.

\Cref{prop clustered in pairs} below expresses a condition on the set $\{a_0, b_0\}, \dots, \{a_g, b_g\}$ of pairs of fixed points of generators $s_i$ of $\Gamma_0$ which is necessary in order for the subgroup generated by the elements $\gamma_{i, j} = s_0^{j - 1} s_i s_0^{-j}$ to be Schottky.  We note in \Cref{rmk converse} below that a stronger version of this proposition exists which relies on the freeness of the Schottky group, along with a converse statement, but the version we present in this subsection will be sufficient for obtaining the results we want.

\begin{prop} \label{prop clustered in pairs}

Let $S \subset \proj_K^1$ be a $p$-superelliptic subset with associated $p$-Whittaker group $\Gamma_0 < \PGL_2(K)$ and subspaces $\hat{\Lambda}_{(i)} \subset \Berk$ as above.  Suppose that we have $s_j^n(\hat{\Lambda}_{(i)}) \cap \hat{\Lambda}_{(l)} \neq \varnothing$ for some indices $i, j, l \in \{0, \dots, g\}$ with $i \neq j$ and some $n \in \zz$.  Then we have $i = l$ and $n \in p\zz$.  In particular, we have $\hat{\Lambda}_{(i)} \cap \hat{\Lambda}_{(l)} = \varnothing$ for $i \neq l$.

\end{prop}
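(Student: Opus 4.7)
The plan is to exhibit an element $\gamma \in \Gamma$ that fixes a non-Type-I point of $\Berk$; since $\Gamma$ is Schottky, every non-trivial element is loxodromic and, by \Cref{cor loxodromic doesn't fix points}, fixes no points outside Type I, so $\gamma$ will be forced to be trivial. The arithmetic of the word $\gamma$ in the free product $\Gamma_0 = \langle s_0 \rangle \ast \cdots \ast \langle s_g \rangle$ will then give the desired conclusion $i=l$ and $n \in p\zz$.

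Specifically, given $\eta \in s_j^n(\hat{\Lambda}_i) \cap \hat{\Lambda}_l$, I would set
\[
\gamma \ :=\ s_l \cdot s_j^n \cdot s_i^{p-1} \cdot s_j^{-n}\,.
\]
Since $s_j^{-n}(\eta) \in \hat{\Lambda}_i$, and $s_i$ (hence $s_i^{p-1}$) fixes $\hat{\Lambda}_i$ pointwise by \Cref{prop order-p elliptic}(b), applying $s_j^n s_i^{p-1} s_j^{-n}$ to $\eta$ returns $\eta$; then $s_l$ fixes $\eta$ since $\eta \in \hat{\Lambda}_l$, giving $\gamma(\eta) = \eta$. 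To see $\gamma \in \Gamma$, I would use that the homomorphism $\Gamma_0 \to \zz/p\zz$ sending every $s_i \mapsto 1$ is well-defined (since $s_i^p = 1$) and has kernel containing each generator $\gamma_{i,j} = s_0^{j-1} s_i s_0^{-j}$ of $\Gamma$; as both $\Gamma$ and this kernel have index $p$ in $\Gamma_0$, they coincide. Under this map $\gamma$ goes to $1 + n + (p-1) + (-n) = p \equiv 0$, so $\gamma \in \Gamma$.

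To force $\gamma = 1$, I need $\eta$ to be of Type II or III. I would argue that if $s_j^n(\hat{\Lambda}_i) \cap \hat{\Lambda}_l$ is nonempty, it must contain a non-Type-I point: the only Type-I points of $\hat{\Lambda}_l$ are its two endpoints $\eta_{a_l}, \eta_{b_l}$ (and analogously for $s_j^n(\hat{\Lambda}_i)$), and if these two tubes shared a Type-I point it would be a common endpoint of both underlying axes, but from any such Type-I leaf the unique outgoing direction in the tree $\Berk$ would force the two axes (hence the two tubes) to coincide along an initial segment containing non-Type-I points. Thus we may take $\eta$ of Type II or III, so $\gamma \in \Gamma$ fixes a non-Type-I point, forcing $\gamma = 1$.

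Finally, by the setup, $\Gamma_0 = \langle s_0,\ldots,s_g \mid s_0^p = \cdots = s_g^p = 1\rangle$ is the free product of the cyclic groups $\langle s_i \rangle$, and standard reduced-word analysis handles the relation $s_l s_j^n s_i^{p-1} s_j^{-n} = 1$. Splitting cases on whether $n \equiv 0 \pmod p$ and whether $l = j$, and recalling that $i \neq j$ is given, every case produces a reduced nontrivial word except when $n \equiv 0 \pmod p$ and $l = i$, which gives exactly the claim. The ``in particular'' clause follows by choosing any $j \neq i$ (possible since $g \geq 1$) and applying the main statement with $n = 0$: if $\hat{\Lambda}_i \cap \hat{\Lambda}_l \neq \varnothing$, then $i = l$, contradicting $i \neq l$. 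The main obstacle is the geometric claim in the penultimate step that the intersection must contain non-Type-I points whenever nonempty; the word-reduction in the free product is then routine.
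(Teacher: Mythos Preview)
Your overall strategy---construct an element $\gamma \in \Gamma$ fixing a point of Type II or III, then invoke \Cref{cor loxodromic doesn't fix points} to force $\gamma = 1$---is exactly the paper's approach, and your element $s_l s_j^n s_i^{p-1} s_j^{-n}$ is literally the inverse of the paper's element $s_j^n s_i s_j^{-n} s_l^{-1}$. Your argument that the intersection contains a non-Type-I point is also fine (the paper dispatches this in one sentence).

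The genuine difference is in how you extract the conclusion from $\gamma = 1$. You invoke the free-product structure $\Gamma_0 \cong \langle s_0\rangle * \cdots * \langle s_g\rangle$ and do a reduced-word analysis. The paper deliberately avoids this: from $s_l = s_j^n s_i s_j^{-n}$ with $i \neq l$ it observes that $\{s_0,\dots,s_g\}\smallsetminus\{s_l\}$ already generates $\Gamma_0$, contradicting property~(ii) of \Cref{dfn good}; this gives $i = l$. For $n \in p\zz$ the paper gives a separate geometric argument using the disjointness $\hat{\Lambda}_i \cap \hat{\Lambda}_j = \varnothing$ just established together with \Cref{prop loxodromic action on berk}(d).

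This matters because within the paper's internal logic your final step is circular: the free-product structure of $\Gamma_0$ for a general good $S$ is only obtained in \Cref{cor optimal free} and \Cref{rmk free}, and the route to \Cref{rmk free} passes through \Cref{thm criterion for goodness}, whose proof uses \Cref{lemma bad foldings are bad}, which in turn invokes \Cref{prop clustered in pairs} itself. The paper flags this explicitly in the paragraph preceding the proposition and in \Cref{rmk converse}. If you are willing to import the free-product result from \cite{van1982galois} or \cite[\S I.3]{gerritzen2006schottky} as a black box, your argument is correct and slightly slicker; but as a self-contained step in this paper's development, you need to replace the reduced-word analysis with the paper's use of condition~(ii) plus the short geometric argument for $n \in p\zz$.
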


\begin{proof}

It is a straightforward exercise to observe from the topology of $\Berk$ that any non-disjoint paths must intersect at a point of Type II or III.  Suppose that for such indices $i \neq j, l$ and an integer $n \in \zz$, there is a point $\eta$ of Type II or III lying in the intersection $s_j^n(\hat{\Lambda}_{(i)}) \cap \hat{\Lambda}_{(l)}$.  We first show that this implies that $i = l$.  Since the elements $s_j^n s_i s_j^{-n}$ and $s_l$ respectively fix $s_j^n(\hat{\Lambda}_{(i)})$ and $\hat{\Lambda}_{(l)}$ pointwise, the product $s_j^n s_i s_j^{-n} s_l^{-1} \in \Gamma$ fixes $\eta$.  Since no nontrivial element of a Schottky group fixes a point of Type II or III (by \Cref{prop loxodromic action on berk}), we have $s_j^n s_i s_j^{-n} s_l^{-1} = 1$, or equivalently, that $s_l = s_j^n s_i s_j^{-n}$.  If $i \neq l$, then this implies that $j \neq l$ also, and then the $p$-Whittaker group $\Gamma_0$ is generated by the $g$-element set $\{s_0, \dots, s_g\} \smallsetminus \{s_l\}$, which contradicts \Cref{dfn superelliptic set}(ii).  We therefore have $i = l$; now applying the result just shown to the case when $n = 0$ and the indices $i$ and $l$ are replaced by an arbitrary pair of indices, we get that the subspaces $\hat{\Lambda}_{(0)}, \dots, \hat{\Lambda}_{(g)} \subset \Berk$ are mutually disjoint.

We still need to show that $s_j^n(\hat{\Lambda}_{(i)}) \cap \hat{\Lambda}_{(l)} = s_j^n(\hat{\Lambda}_{(i)}) \cap \hat{\Lambda}_{(i)} \neq \varnothing$ implies $n \in p\zz$.  Assume that $n \notin p\zz$ and choose any point $\eta \in \Lambda_{(i)}$; our goal is to show that $s_j^n(\eta) \notin \Lambda_{(i)}$ for any $j \neq i$.  From our observation in the end of the last paragraph, we have $\hat{\Lambda}_{(i)} \cap \hat{\Lambda}_{(j)} = \varnothing$ and therefore, the closest point $\xi$ in $\hat{\Lambda}_{(j)}$ to $\eta$ does not lie in $\Lambda_{(i)}$.  Meanwhile, it follows from \Cref{prop loxodromic action on berk}(d) that the path $[\eta, \xi] \cup [\xi, s_j^n(\eta)]$ is non-backtracking.  Now the desired fact that $s_j^n(\eta) \notin \Lambda_{(i)}$ follows easily from the uniqueness of non-backtracking paths as discussed in \Cref{rmk paths in berk}.
\end{proof}

\begin{dfn} \label{dfn clustered in pairs}

Let $A$ be a finite multiset of positive even cardinality consisting of points in $\proj_K^1$.  We say that $A$ is \emph{clustered in $r$-separated pairs} for some $r \in \rr_{\geq 0}$ if there is a labeling $a_0, b_0, \dots, a_g, b_g$ of the elements of $A$ such that we have 
\begin{enumerate}[(i)]
\item $a_i \neq b_i$ for $1 \leq i \leq g$ and 
\item $B(\Lambda_{(i)}, r) \cap B(\Lambda_{(j)}, r) = \varnothing$ for $i \neq j$.
\end{enumerate}

If $A$ is clustered in $0$-separated pairs, we say more simply that $A$ is \emph{clustered in pairs}.

In the context of using this terminology, the \emph{pairs} that $A$ is clustered in are the $2$-element sets $\{a_i, b_i\}$.

\end{dfn}

\begin{rmk} \label{rmk clustered in pairs}

We make the following crucial observations concerning clustering in pairs.

\begin{enumerate}[(a)]

\item An important consequence of \Cref{prop clustered in pairs} is that if an even-cardinality subset $S \subset \proj_K^1$ is superelliptic with respect to a labeling of its elements as $a_0, b_0, \dots, a_g, b_g$, then it must be clustered in $\frac{v(p)}{p - 1}$-separated pairs, and the pairs are the subsets $\{a_i, b_i\}$.

\item It is not difficult to verify, particularly from the definition outlined in \Cref{rmk clustered in pairs alternate definition} below, that a subset of $\proj_{\cc_K}^1$ which is clustered in pairs under a particular labeling of its elements cannot be clustered in pairs under any other labeling; in other words, a set can be clustered in at most one set of pairs.  This observation, together with part (a), imply that there is at most one unique way to partition a set $S \subset \proj_K^1$ into $2$-element subsets $\{a_i, b_i\}$ so that the order-$p$ elements $s_i$ fixing these pairs generate a $p$-Whittaker group.

\end{enumerate}

\end{rmk}

It is often useful and more intuitive to characterize the property of being clustered in ($r$-separated) pairs in terms of the \emph{cluster data} of the set $S$ (which in turn can be computed directly and easily from the distances between the points in $S \smallsetminus \{\infty\}$ with respect to the valuation on $K$).  We define the notion of \emph{clusters} below, following \cite[Definition 1.1]{dokchitser2022arithmetic}.

\begin{dfn} \label{dfn cluster}

Let $A \subset \proj_K^1$ be a finite subset.  A subset $\mathfrak{s} \subseteq A$ is called a \emph{cluster} (of $A$) if there is some disc $D \subset K$ such that $\mathfrak{s} = A \cap D$.  The \emph{depth} of a cluster $\mathfrak{s}$ is the integer 
\begin{equation}
d(\mathfrak{s}) := \min_{z, z' \in \mathfrak{s}} v(z - z').
\end{equation}
The data of all clusters of a finite subset $A \subset \proj_K^1$ along with each of their depths is called the \emph{cluster data} of $A$.

\end{dfn}

The use of the same notation for the depth of a disc as for the (logarithmic) radius of a disc (given at the top of this section) is deliberate: if $D_{\mathfrak{s}} \subset \cc_K$ is the minimal disc containing a cluster $\mathfrak{s}$ of a finite subset $A \subset K$, then it is immediate from the definitions that we have $d(D_{\mathfrak{s}}) = d(\mathfrak{s})$.

\begin{rmk} \label{rmk clustered in pairs alternate definition}

As mentioned above, it will be useful in some contexts to rephrase \Cref{dfn clustered in pairs} in the language of clusters, which is indeed what inspired the phrasing in the terminology.  The downside of this is that the definitions themselves are then less elegant to state and sometimes more difficult to manipulate in arguments.

To avoid making the following definitions too cumbersome, we assume that $A \subset \proj_{\cc_K}^1$ is a subset (rather than just a multiset).  Define an equivalence relation $\sim$ on $A$ as follows: given two points $z, z' \in A$, we write $z \sim z'$ if $z$ and $z'$ lie in the exact same even-cardinality clusters of $A$, \textit{i.e.} if for every even-cardinality cluster $\mathfrak{s} \subseteq A$ we have either $z, z' \in \mathfrak{s}$ or $z, z' \notin \mathfrak{s}$.

Then we may define \emph{clustered in pairs} by saying that a non-empty even-cardinality subset $A \subset \proj_K^1$ is clustered in pairs (where the pairs are particular subsets $\{a_i, b_i\} \subset A$ for $0 \leq i \leq g$) if the equivalence classes in $A$ under $\sim$ all have cardinality $2$ (and coincide with the subsets $\{a_i, b_i\}$).

To define \emph{clustered in $r$-separated pairs} in this language is slightly more complicated.  Given any cluster $\mathfrak{s} \subset A$, write $\mathfrak{s}'$ (resp. $\mathfrak{s}^\sim$) for the smallest cluster properly containing $\mathfrak{s}$ (resp. properly containing $\mathfrak{s}$ and which is not itself the disjoint union of $\geq 2$ even-cardinality sub-clusters), if such a cluster properly containing $\mathfrak{s}$ exists.  Then a set $A$ is clustered in $r$-separated pairs for some $r \in \rr_{\geq 0}$ if it is clustered in pairs and if the following two conditions hold, for any even-cardinality clusters $\mathfrak{c}, \mathfrak{c}_1, \mathfrak{c}_2$ which are themselves not the disjoint union of $\geq 2$ even-cardinality clusters:
\begin{enumerate}[(i)]
\item if $\mathfrak{c}^\sim$ is defined, we have $d(\mathfrak{c}) - d(\mathfrak{c}^\sim) > 2r$; and 
\item if $\mathfrak{c}_1', \mathfrak{c}_2'$ are defined and $\mathfrak{s} := \mathfrak{c}_1' = \mathfrak{c}_2'$, we have $d(\mathfrak{c}_1) + d(\mathfrak{c}_2) - 2d(\mathfrak{s}) > 2r$.
\end{enumerate}

\end{rmk}

In the special case that $\#S = 4$ and $p = 2$ (in which case if $S$ is superelliptic we produce a uniformization of an elliptic curve), it is proved in \cite[\S IX.2.5]{gerritzen2006schottky}, under the assumption that the residue characteristic of $K$ is not $2$ (so that $\frac{v(p)}{p - 1} = 0$), that the converse to \Cref{rmk clustered in pairs}(a) holds: being clustered in pairs is a sufficient condition for a set $S$ to be superelliptic.  We will see in \Cref{prop g = 1 converse} below that our above results prove this converse for any $p$ and with the residue characteristic condition dropped.  Such a converse no longer holds in general, however, as soon as the cardinality of $S$ exceeds $4$, as the below example shows.

\begin{ex} \label{ex clustered in pairs not sufficient}

Let $K = \qq_5$, equipped with the usual $5$-adic valuation $v : \qq_5^\times \to \zz$, and consider the subset $S := \{7, 12, 0, 5, 1, \infty\} \subset \proj_K^1$.  It is easy to see that $S$ is clustered in ($\frac{v(2)}{2 - 1} = 0$-separated) pairs, which are $\{a_0 := 7, b_0 := 12\}, \{a_1 := 0, b_1 := 5\}, \{a_2 := 1, b_2 := \infty\}$.  We now show that the set $S$ is not $2$-superelliptic.  Letting $s_i \in \PGL_2(K)$ be (as usual) the unique order-$2$ element which fixes $a_i$ and $b_i$ for $i = 0, 1, 2$, we may write matrix representations for these fractional linear transformations as 
\begin{equation} \label{eq clustered in pairs not sufficient}
s_0 = \begin{bmatrix} 19 & -168 \\ 2 & -19 \end{bmatrix}, \ s_1 = \begin{bmatrix} 5 & 0 \\ 2 & -5 \end{bmatrix}, \ s_2 = \begin{bmatrix} -1 & 2 \\ 0 & -1 \end{bmatrix}.
\end{equation}
As in (\ref{eq Gamma}), we have $\Gamma_0 = \langle s_0, s_1, s_2 \rangle$ and $\Gamma = \langle s_1 s_0, s_2 s_0 \rangle$, and therefore we have $s_1 s_2 s_0 s_2 = (s_1 s_0)(s_2 s_0)^{-2} \in \Gamma$.  One now computes directly from (\ref{eq clustered in pairs not sufficient}) that a representative matrix for $s_1 s_2 s_0 s_2$ has characteristic polynomial $T^2 - 350T + 625$.  By examining the Newton polygon of this characteristic polynomial, we see that both of its roots have valuation $2$.  The product $s_1 s_2 s_0 s_2 \in \Gamma$ is therefore by definition not loxodromic, and so $S$ is not $2$-superelliptic despite being clustered in pairs.

\end{ex}

\section{Group actions on convex hulls of sets of fixed points} \label{sec 3}

For the rest of the paper, we will make use of the following notation.

\begin{dfn} \label{dfn D_i}

Let $A \subset \proj_K^1$ be a subset which is clustered in the pairs $\{a_0, b_0\}, \dots, \{a_g, b_g\}$.  Then for $0 \leq i \leq g$, if $\infty \notin \{a_i, b_i\}$ (resp. if $\infty \in \{a_i, b_i\}$), we define $D_i \subset \cc_K$ to be the minimal disc containing both points $a_i, b_i \in K$ (resp. containing $A \smallsetminus \{\infty\}$).  Write $v_i = \eta_{D_i} \in \Berk$ for the points of Type II corresponding to these discs.

\end{dfn}

\subsection{Convex hulls and reduced convex hulls of finite sets} \label{sec 3 convex hull}

We begin by defining two subspaces of $\Berk$ associated to a given finite subset of $\proj_K^1$ (which in practice will be the set $S$ of fixed points of a $p$-Whittaker group).

\begin{dfn} \label{dfn reduced convex hull}

Given a finite subset $A \subset \proj_K^1$ viewed as a subset of point of Type I in $\Berk$, we define the \emph{convex hull} of $A$ to be the smallest connected subset $\Sigma_A \subset \Berk$ containing $A$ (which is well defined because any two endpoints has a unique path in the space $\Berk$).

If $A$ has even cardinality, let $\Sigma_{A, 1} \subset \Sigma_A$ be the subset consisting of the points $\eta \in \Sigma_A \smallsetminus A$ such that the $2$ connected components of $\Sigma_A \smallsetminus \{\eta\}$ each have odd-cardinality intersection with $A$.  We define the \emph{reduced convex hull} $\Sigma_{A, 0}$ of $A$ to be the closure of the interior of $\Sigma_A \smallsetminus \Sigma_{A, 1}$ as a subspace of $\Sigma_A$.

If $A$ is instead a multiset consisting of elements of $\proj_K^1$ with underlying set $\underline{A}$, we write $\Sigma_A$ for $\Sigma_{\underline{A}}$ and (if $\underline{A}$ has even cardinality) $\Sigma_{A, 0}$ for $\Sigma_{\underline{A}, 0}$.

\end{dfn}

We recall that a \emph{real tree} can be viewed topologically as a space in which each point has an open neighborhood homeomorphic to an open interval in $\rr$, except for a finite number of points, which we call the \emph{natural vertices}, whose open neighborhoods are not homeomorphic to an open interval but either are a half-closed interval or contain a star shape centered at the vertex.  A real tree has the structure of a metric graph whose vertices are the natural vertices described above; we may also enhance such a space by specifying other points as vertices to be included in the data of its metric graph structure (any such ``extra'' vertex which is not a natural vertex must have valency $2$), noting that this operation does not affect whether or not the metric graph is finite.  In the situation that we are interested in, we are able to define such ``extra'' vertices as follows.

\begin{dfn} \label{dfn distinguished vertices}

Let $A \subset \proj_K^1$ be a $(2g + 2)$-element subset which is clustered in pairs $\{a_0, b_0\}, \dots, \{a_g, b_g\}$, and let the axes $\Lambda_{(0)}, \dots, \Lambda_{(g)} \subset \Berk$ be as in \Cref{dfn associated to S}.  We designate the \emph{distinguished vertices} of the reduced convex hull $\Sigma_{A, 0}$ to be the points lying in $\Sigma_{A, 0} \cap (\Lambda_{(0)} \cup \dots \cup \Lambda_{(g)})$.

\end{dfn}

We now establish some properties of reduced convex hulls regarding their structures as metric graphs.

\begin{prop} \label{prop reduced convex hull}

Suppose that an even-cardinality subset $A \subset \proj_K^1$ is clustered in $r$-separated pairs labeled $\{a_0, b_0\}, \dots, \{a_g, b_g\}$ for some $r \geq 0$ and $g \geq 1$.  The reduced convex hull $\Sigma_{A, 0}$ is a real tree which, enhanced by specifying the distinguished vertices as defined in \Cref{dfn reduced convex hull} as vertices, has the structure of a finite metric graph.

Let $o$ be the number of odd-cardinality clusters of $A$ whose cardinality is not $1$ or $\#A - 1 = 2g + 1$.

\begin{enumerate}[(a)]

\item The subspace $\Sigma_{A, 0} \subset \Sigma_A$ is the closure of the interior of $\Sigma_A \smallsetminus \{\Lambda_{(0)} \sqcup \dots \sqcup \Lambda_{(g)}\}$ as a subspace of $\Sigma_A$.

\item For $0 \leq i \leq g$, given a point $\eta \in \Sigma_A$, the closest point in $\Sigma_{A, 0}$ to $\eta$ lies in $\Lambda_{(i)}$ (resp. $\hat{\Lambda}_{(i)}$) if and only if we have $\eta \in \Lambda_{(i)}$ (resp. $\eta \in \hat{\Lambda}_{(i)}$).

\item All vertices of $\Sigma_{A, 0}$ with valency $\leq 2$ are distinguished.

\item For each index $i$, the set of distinguished vertices $\Sigma_{A, 0} \cap \Lambda_{(i)}$ consists of the points $\eta_D$ for all discs $D \subset \cc_K$ which minimally constains an odd-cardinality cluster $\mathfrak{s} \subseteq A$ such that $3 \leq \#\mathfrak{s} \leq 2g + 1$ and $\#(\mathfrak{s} \cap \{a_i, b_i\}) = 1$, along with the point $v_i = \eta_{D_i}$ unless we have $D_i \supset A \not\ni \infty$ and no cluster of cardinality $2g + 1$.  Each of these vertices lies in a separate connected component of $\Sigma_{A, 0}$.  The total number of distinguished vertices of $\Sigma_{A, 0}$ is $o + g + 1$.

\item The number of connected components of $\Sigma_{A, 0}$ is $o + 1$.

\item The distance between each pair of distinct distinguished vertices lying in the same component of $\Sigma_{A, 0}$ is a rational number in $v(K^\times)$ which is $> 2r$.

\item Let $\Sigma_0' \subseteq \Sigma_{A, 0}$ be a component of $\Sigma_{A, 0}$.  Then we have $\Sigma_0' = \Sigma_{A', 0}$, where $A'$ consists of the points $a_i, b_i \in A$ for each index $i$ such that $\Sigma_0' \cap \Lambda_{(i)} \neq \varnothing$.

\end{enumerate}

\end{prop}

\begin{proof}

We begin with the observation that for any index $i$, we may describe the points in $\Lambda_{(i)} \smallsetminus \{\eta_{a_i}, \eta_{b_i}\}$ as those points $\eta_D$ where $D \subset \cc_K$ such that either we have $\#(D \cap \{a_i, b_i\}) = 1$ or (in the case that $\infty \notin \{a_i, b_i\}$) we have $D = D_i$ (the smallest disc containing $\{a_i, b_i\}$).  We observe moreover that if $D \subset \cc_K$ is the smallest disc containing \textit{any} particular subset (cluster) of $A$, then there is some index $i$ such that $D$ is the minimal disc containing $\{a_i, b_i\}$ (resp. we have $\#(D \cap \{a_i, b_i\}) = 1$) if $\#(D \cap A)$ is even (resp. odd) and therefore the corresponding point $\eta_D$ lies in one of the axes $\Lambda_{(i)}$.  We will freely use this observations throughout the rest of the proof.

Each point $\eta \in \Sigma_A$ which separates $A$ into odd-cardinality subsets as in \Cref{dfn reduced convex hull} must lie in some axis $\Lambda_{(i)}$, while conversely, for each $i$, each point in $\Lambda_{(i)} \smallsetminus \{\eta_{a_i}, \eta_{b_i}, v_i = \eta_{D_i}\}$ separates $A$ into odd-cardinality subsets (as does the point $v_i$ if $\infty \in \{a_i, b_i\}$).  Since $\eta_{a_i}, \eta_{b_i}$ are isolated points in each axis $\Lambda_{(i)}$, as is $v_i$ if $\infty \notin \{a_i, b_i\}$, part (a) follows.

Now let $\eta \in \Sigma_A$ be any point.  If we have $\eta \in \Sigma_{A, 0}$, then we already get the conclusion of part (b), so let us assume that $\eta \in \Sigma_A \smallsetminus \Sigma_{A, 0}$.  Then it follows from part (a) that we have $\eta \in \Lambda_{(i)} \subseteq \hat{\Lambda}_{(i)}$ for a unique index $i$ (the uniqueness coming from the fact that the $\hat{\Lambda}_{(i)}$'s are mutually disjoint) and that the path from $\eta$ to any point on $\Sigma_{A, 0}$ must pass through a point in $\Lambda_{(i)} \cap \Sigma_{A, 0}$.  This implies part (b).

By construction of the graph structure on $\Sigma_{A, 0}$, all vertices which are not natural are distinguished.  A vertex of valency $2$ cannot be natural, since it has a neighborhood homeomorphic to an open subset of $\rr$, so it is distinguished.  Now let $v$ be a vertex of valency $\leq 1$.  Any point of Type II or III in $\Sigma_A$ clearly has a neighborhood homeomorphic to an open interval in $\rr$ or a star shape centered at $v$, by the fact that $\Sigma_A \subset \Berk$ is the smallest connected subspace containing a fixed set of points of Type I.  Therefore, any neighborhood of $v$ contains a point in $\Sigma_A \smallsetminus \Sigma_{A, 0}$.  Part (a) implies that such a point lies in one of the axes $\Lambda_{(i)}$, and thus, so does $v$ as it is a limit of such points; this makes $v$ a distinguished vertex by definition and proves part (c).

We now fix an index $i$ and set out to characterize all (distinguished) vertices lying in $\Sigma_{A, 0} \cap \Lambda_{(i)}$.  It follows from part (a) that each such point must be the nearest point on $\Lambda_{(i)}$ to $\Lambda_{(j)}$ for some index $j \neq i$.  Choose a point $\eta_D \in \Lambda_{(i)}$ such that the corresponding disc $D \subset \cc_K$ is not the minimal disc containing the cluster $D \cap A$.  Choose an index $j \neq i$ and a point $\eta_{D'} \subset \Lambda_{(j)}$ and consider the path $[\eta_D, \eta_{D'}] \subset \Sigma_A$.  Using the notation of \Cref{rmk paths in berk}, if the disc corresponding to $\eta_D \vee \eta_{D'}$ strictly contains $D$, then the sub-path $[\eta_D, \eta_D \vee \eta_{D'}] \subseteq [\eta_D, \eta_{D'}]$ has an interior, and there clearly exists a disc $D'' \supsetneq D$ satisfying $D'' \cap A = D \cap A$ and $\eta_{D''} \in [\eta_D, \eta_{D \vee D'}]$.  Then by our above description of points of $\Lambda_{(i)}$, we have $\eta_{D''} \in \Lambda_{(i)}$ and so the point $\eta_D$ is not the closest point in $\Lambda_{(i)}$ to $\Lambda_{(j)}$.  Similarly, if instead we have $\eta_D \vee \eta_{D'} = \eta_D$, then we get the inclusion $D' \subsetneq D$; from the fact that $D' \cap A \neq \varnothing$ (as $\eta_{D'} \in \Lambda_{(j)}$), it is easy to deduce the existence of a disc $D'' \subsetneq D$ satisfying $D'' \cap A = D \cap A$ and $\eta_{D''} \in [\eta_D, \eta_{D'}]$.  Then again we get $\eta_{D''} \in \Lambda_{(i)}$ and reach the same conclusion about the point $\eta_D$.

We have thus shown that any point $\eta_D \in \Sigma_{A, 0} \cap \Lambda_{(i)}$ satisfies that the corresponding disc $D \subset \cc_K$ is the smallest disc containing the cluster $D \cap A$.  Conversely, we now let $D \subset \cc_K$ be any disc such that $D$ is the smallest disc containing the cluster $\mathfrak{s} := D \cap A$.  Let us assume first that $\mathfrak{s}$ has even cardinality, so that we must have $D = D_i$ for some index $i$.  Assume further that we have $\mathfrak{s} \subsetneq A$.  Then there is a minimal cluster $\mathfrak{s}' \supsetneq \mathfrak{s}$, and it follows from \Cref{rmk clustered in pairs alternate definition} that we have $\#(\mathfrak{s}' \smallsetminus \mathfrak{s}) \in \{1, 2\}$.  Without loss of generality, we may assume that $a_j \in \mathfrak{s}' \smallsetminus \mathfrak{s}$ for some index $j \neq i$.  Let $D' \subset \cc_K$ be the smallest disc containing the cluster $\mathfrak{s}'$.  Then we have $\eta_{D'} \in \Lambda_{(j)}$, and the path $[\eta_D, \eta_{D'}] \subset \Sigma_{A, 0}$ consists of points of the form $\eta_{D''}$ with $D'' \supsetneq D$; this implies that $[\eta_D, \eta_{D'}] \cap \Lambda_{(i)} = \{\eta_D\}$.  Thus, the point $\eta_D$ is a limit point of $\Sigma_{A, 0} \smallsetminus \{\Lambda_{(0)} \sqcup \dots \sqcup \Lambda_{(g)}\}$ and by part (a) lies in $\Sigma_{A, 0}$.

Suppose on the other hand that we have $\infty \notin A$ and $\mathfrak{s} = A$, so that $D = D_i$.  Each point $\eta_{D'} \in (\Lambda_{(0)} \sqcup \dots \sqcup \Lambda_{(g)}) \smallsetminus \Lambda_{(i)}$ corresponds to a disc $D' \subsetneq D$ satisfying $D \cap A \neq \varnothing$, and the interior of the path $[\eta_D, \eta_{D'}] \subset \Sigma_A$ consists of points of the form $\eta_{D''}$ with $D' \subsetneq D'' \subsetneq D$.  If there is a cluster $\mathfrak{s}''$ of cardinality $\#A - 1$, then for sufficiently large such discs $D''$ we have $D'' \cap A = \mathfrak{s}''$; since we have $(A \smallsetminus \mathfrak{s}'' \cap \{a_i, b_i\}) = 1$, this implies that the corresponding point $\eta_{D''}$ sufficiently close to $\eta_D$ lies in $\Lambda_{(i)}$, and so $\eta_D$ is not the closest point of $\Lambda_{(i)}$ to any other axis $\Lambda_{(j)}$ and we have $\eta_D \notin \Sigma_{A, 0}$.  If instead there is no cluster of cardinality $\#A - 1$, then we have $D'' \cap \{a_i, b_i\} = \varnothing$ for every such $D''$, implying that $\eta_{D''} \notin \Lambda_{(i)}$; by a similar limit point argument to the one used above, we get $\eta_D \in \Sigma_{A, 0}$.

Finally, assume that $\mathfrak{s}$ has odd cardinality $\geq 3$.  Then we have $\#(\mathfrak{s} \cap \{a_i, b_i\}) = 1$, and one can easily deduce from \Cref{rmk clustered in pairs alternate definition} that there is a maximal subcluster $\mathfrak{s}' \subsetneq \mathfrak{s}$ with $\mathfrak{s}' \cap \{a_i, b_i\} = \varnothing$.  Letting $D'$ be the minimal cluster containing $\mathfrak{s}'$, we get $\eta_D \in \Lambda_{(j)}$ for some index $j \neq i$.  By arguments to those used above involving paths and limit points, we then get $\eta_D \in \Sigma_{A, 0}$.

We have shown that for each index $i$, the intersection $\Sigma_{A, 0} \cap \Lambda_{(i)}$ is a discrete set; now by uniqueness of (non-backtracking) paths (see \Cref{rmk paths in berk}) it follows that each point in this intersection lies in a different component of $\Sigma_{A, 0}$.  We are able to count $1$ distinguished vertex for each cluster of odd cardinality $\neq 1, 2g + 1$ as well as the $g + 1$ distinguished vertices $v_0', \dots, v_g'$, where $v_i'$ is the vertex corresponding to the smallest disc containing the (necessarily unique) cardinality-($2g + 1$) cluster if such a cluster exists and $v_i' = v_i$ otherwise.  This completes the proof of part (d).

Meanwhile, the above description of a distinguished vertex in each intersection $\Sigma_{A, 0} \cap \Lambda_{(i)}$ as the closest point in the axis $\Lambda_{(i)}$ to some other axis $\Lambda_{(j)}$ implies that the reduced convex hull $\Sigma_{A, 0}$ is contained in the smallest connected subset $\Sigma_A' \subset \Sigma_A$ containing all distinguished vertices.  Clearly the points of $\Sigma_A'$ are each of Type II or III and thus the distances between them are finite.  Part (a) implies that the subset $\Sigma_{A, 0} \subseteq \Sigma_A'$ is obtained by removing the interior of each path in $\Lambda_{(i)}$ between distinguished vertices in $\Sigma_{A, 0} \cap \Lambda_{(i)}$ for each $i$.  From unique path-connectness one now easily sees that $\Sigma_A'$ is a finite metric graph and so the same is true of $\Sigma_{A, 0}$, as claimed by the proposition.  Meanwhile, part (d) shows that there is $1$ such path for each endpoint $\eta_D$ corresponding to a disc $D \subset \cc_K$ minimally containing a cluster of odd cardinality $\neq 1, 2g + 1$.  Thus, exactly $o$ disjoint open segments are removed from the simply connected space $\Sigma_A'$ to get $\Sigma_{A, 0}$, and part (e) follows.

Part (d) says that each distinguished vertex of $\Sigma_{A, 0}$ corresponds to a disc $D \subset \cc_K$ which minimally contains a cluster of $A$ and which therefore satisfies $d(D) \in v(K^\times)$; it follows that the distance between any two distinguished vertices lies in $v(K^\times) \subset \qq$.  Moreover, part (d) tells us that distinct distinguished vertices in the same component $\Sigma' \subseteq \Sigma_{A, 0}$ must lie in distinct axes $\Lambda_{(i)}, \Lambda_{(j)}$.  If such distinct vertices had distance $\leq 2r$, then the midpoint of the path between them (which itself lies in $\Sigma' \subseteq \Sigma_{A, 0}$ by connectedness) would lie in both $B(\Lambda_{(i)}, r)$ and $B(\Lambda_{(j)}, r)$ for some indices $i \neq j$, contradicting $r$-separatedness.  Thus, part (f) is proved.

Finally, to prove part (g), let $\Sigma_0' \subseteq \Sigma_{A, 0}$ be a component as in the statement of part (e), and let $I \subseteq \{0, \dots, g\}$ be the subset of indices $i$ such that $\Sigma_0' \cap \Lambda_{(i)} \neq \varnothing$.  Let $\Sigma' = \bigsqcup_{i \in I} \Lambda_{(i)} \cup \Sigma_0'$.  As each axis $\Lambda_{(i)}$ as well as the space $\Sigma_0'$ is connected and $\Sigma'$ intersects each axis $\Lambda_{(i)}$ for $i \in I$, the space $\Sigma'$ is connected.  Therefore, to prove part (e) it clearly suffices to show that for any point $\eta \in \Sigma' \smallsetminus \bigsqcup_{i \in I} \Lambda_{(i)}$, the space $\Sigma' \smallsetminus \{\eta\}$ is not connected.  But this follows immediately from the fact that $\Sigma_A \smallsetminus \{\eta\}$ is not connected by construction of $\Sigma_A$, and the desired result follows.
\end{proof}

\subsection{Folding a reduced convex hull} \label{sec 3 folding}

In examining a set $S$ clustered in $\frac{v(p)}{p - 1}$-separated pairs through its reduced convex hull $\Sigma_{S, 0}$, we will focus on what happens when $S$ is altered by applying a fixed power of an order-$p$ generator of the corresponding $p$-Whittaker group to some subset of points in $S$.  In our setting, altering a set $S$ by applying a fixed power of an order-$p$ generator to a subset of points in $S$ has the effect of ``folding'' part of the reduced convex hull $\Sigma_{S, 0}$, which is reflected by the terminology introduced by the following definition.

\begin{dfn} \label{dfn folding}

Let $S = \{a_0, b_0, \dots, a_g, b_g\} \subset \proj_K^1$ be a subset which is clustered in $\frac{v(p)}{p - 1}$-separated pairs and define as above all of the associated objects given in \Cref{dfn associated to S} along with its reduced convex hull $\Sigma_{S, 0}$.  Choose distinct indices $i, j$ which satisfy the property that there is a distinguished vertex $v \in \Sigma_{S, 0} \cap \Lambda_{(j)}$ in the same component of $\Sigma_{S, 0}$ as $v_i$ (noting that the choice of $v$ given an index $j$ is unique thanks to \Cref{prop reduced convex hull}(d)).  Let $\tilde{v}$ be the unique point on the path $[v_i, v]$ which has distance $\frac{v(p)}{p - 1}$ from the end point $v$ (so that we have $\tilde{v} = v$ if the residue characteristic of $K$ is not $p$).  Let $I_{i, j} \subset \{0, \dots, g\}$ be the subset consisting of those indices $l$ such that the path $[\eta_{a_l}, \tilde{v}] \subset \Berk$ (or $[\eta_{b_l}, \tilde{v}] \subset \Berk$) intersects the path $[v_i, \tilde{v}]$ at an interior point.

A \emph{folding} of $S$ is a bijection of the form 
\begin{equation}
\phi = \phi_{(i, j, n)} : S \to S' := \{s_j^n(a_l), s_j^n(b_l)\}_{l \in I_{i, j}} \cup \{a_m, b_m\}_{m \notin I_{i, j}}
\end{equation}
 given by sending $a_l, b_l$ respectively to $s_j^n(a_l), s_j^n(b_l)$ for indices $l \in I_{i, j}$ and by fixing $a_m, b_m$ for indices $m \notin I_{i, j}$.
 
Consider the following two properties of a folding $\phi : S \to S'$.

\begin{enumerate}[(i)]

\item The subset $S' \subset \proj_K^1$ is clustered in $\frac{v(p)}{p - 1}$-separated pairs.

\item The intersection $[v_i, s_j^n(v_i)] \cap \Sigma_{S, 0}$ contains a sub-segment $[\tilde{v}_{-\epsilon}, \tilde{v}_\epsilon]$ with $\tilde{v}$ in its interior.

\end{enumerate}

We say that a folding $\phi : S \to S'$ is \emph{good} if it satisfies properties (i) and (ii) above.  We say that a folding $\phi : S \to S'$ is \emph{bad} if it fails to satisfy property (i) above.

We say that any $(2g + 2)$-element subsets $S, S' \subset \proj_K^1$ are \emph{folding equivalent} if there is a sequence of sets $S =: S_0, S_1, \dots, S_{m - 1}, S_m := S'$ such that for $1 \leq l \leq m$, there is a folding $\phi : S_{l - 1} \to S_l$ or $\phi : S_l \to S_{l - 1}$.

\end{dfn}

\begin{rmk} \label{rmk folding change of i}

It is immediate to see that in \Cref{dfn folding} above, if $\phi_{(i, j, n)} : S \to S'$ is a (resp. good, resp. bad) folding of $S$ for some indices $(i, j)$ and exponent $n$, then $\phi_{(i', j, n)} : S \to S'$ is also a (resp. good, resp. bad) folding of $S$ for any index $i' \in I_{i, j}$.

\end{rmk}

\begin{rmk} \label{rmk folding}

A folding of a set $S$ clustered in $\frac{v(p)}{p - 1}$-separated pairs is so called because of its effect on the reduced convex hull $\Sigma_{S, 0}$ of $S$.  Let $\tilde{v} \in \hat{\Lambda}_{(j)} \cap \Sigma_{S, 0}$ be the (unique) point on the path $[v, v']$ satisfying $\delta(\tilde{v}, v) = \frac{v(p)}{p - 1}$.  Let $T_{i, j} \subset \Sigma_{S, 0}$ be the set $(\tilde{T}_{i, j} \cap \Sigma_{S, 0}) \cup \{\tilde{v}\}$, where $\tilde{T}_{i, j}$ is the largest connected subspace of the (non-reduced) convex hull $\Sigma_S$ containing $v_i$ but not $\tilde{v}$.  An effect of producing a new set $S'$ and a folding $\phi = \phi_{i, j, n} : S \to S'$ is that $\Sigma_{S', 0}$ is obtained from $\Sigma_{S, 0}$ by taking the subspace $T_{i, j} \subsetneq \Sigma_{S, 0}$ and ``folding'' it ``across'' $\tilde{v}$ while leaving the rest of $\Sigma_{S, 0}$ unchanged.  More precisely, it induces a map $\phi_* = (\phi_{i, j, n})_* : \Sigma_{S, 0} \to \Sigma_{S'}$ (which is easily verified to be continuous) fixing $\Sigma_{S, 0} \smallsetminus T_{i, j}$ and acting as $s_j^n$ on $T_{i, j}$.  As long as the underlying set of $S'$ has even cardinality, one checks from \Cref{dfn reduced convex hull} that we have $\phi_*(\Sigma_A \smallsetminus \Sigma_{A, 1}) = \Sigma_{A'} \smallsetminus \Sigma_{A', 1}$; now it is an elementary exercise in point-set topology to check that the image of $\phi_*$ coincides with $\Sigma_{S', 0}$.  We can thus view $\phi_*$ as a map which ``changes'' the reduced convex hull of $S$ into that of $S'$.

If $\phi : S \to S'$ is a good folding, then part of the edge from $v_i$ to $v$ is moved under the induced ``folding operation'' $\phi_*$ to part of another edge on $\Sigma_{S, 0}$ coming out of $v$ that also passes through $\tilde{v}$.  Meanwhile, a bad folding $\phi : S \to S'$ is one such that a distinguished vertex of $\Sigma_{S, 0}$ is moved ``too close'' (\textit{i.e.} within a distance of $\frac{v(p)}{p-1}$) to another distinguished vertex of $\Sigma_{S, 0}$ under the map $\phi_*$.

The effect that a folding can have on a reduced convex hull $\Sigma_{S, 0}$ is depicted in \Cref{fig foldings} below.

\end{rmk}

\begin{figure}[h!]

\begin{subfigure}[b]{.3\textwidth}
\centering
\includegraphics[height=5.25cm]{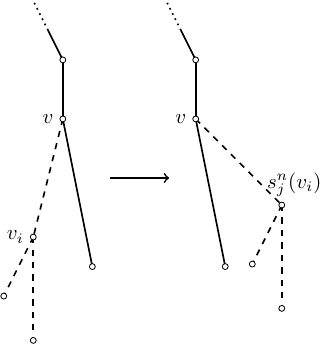}
\end{subfigure}
~
\begin{subfigure}[b]{.3\textwidth}
\centering
\includegraphics[height=5.25cm]{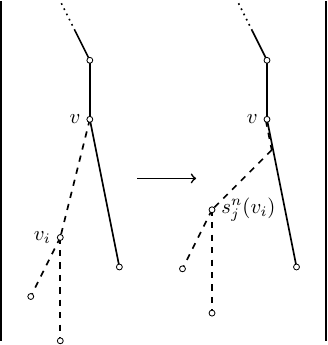}
\end{subfigure}
~
\begin{subfigure}[b]{.3\textwidth}
\centering
\includegraphics[height=5.25cm]{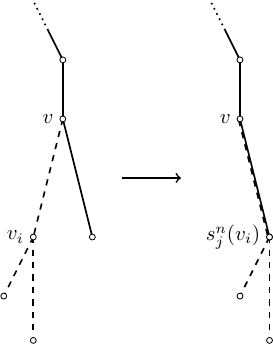}
\end{subfigure}

\caption{Each of these diagrams shows possible effects of a folding of a (different) set $S$ with respect to an ordered pair of indices $(i, j)$ (where in each one the distinguished vertex $v$ lies on the axis $\Lambda_{(j)}$) on part of the reduced convex hull $\Sigma_{S, 0}$.  In each case, the subspace of $\Sigma_{S, 0}$ shown in dashed lines is $T_{i, j}$ as defined in \Cref{rmk folding}.  The diagram on the left shows a folding which is not good or bad; the one in the middle shows a good folding; and the one on the right shows a bad folding.}

\label{fig foldings}
\end{figure}

The first crucial property of foldings that we show is that they do not affect the groups $\Gamma_0 \lhd \Gamma$ associated to a subset of $\proj_K^1$.

\begin{prop} \label{prop folding equivalence properties}

Assume the context of \Cref{dfn folding}, and suppose that $S' \subset \proj_K^1$ is a subset which is folding equivalent to $S$.  Writing $\Gamma \lhd \Gamma_0 < \PGL_2(K)$ and $\Gamma' \lhd \Gamma_0' < \PGL_2(K)$ for the subgroups respectively associated to $S$ and $S'$, we have $\Gamma_0' = \Gamma_0$ and $\Gamma' = \Gamma$.  Moreover, the set $S'$ is superelliptic if and only if $S$ is.

\end{prop}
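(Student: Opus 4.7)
The plan is to reduce to the case of a single folding by induction on the length of a folding-equivalence chain, then to verify $\Gamma_0' = \Gamma_0$ and $\Gamma' = \Gamma$ by directly comparing generators, and finally to derive goodness preservation as a formal consequence.

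Assume $S'$ is a folding of $S$ with respect to an ordered pair $(i,j)$ and exponent $n \in \zz \smallsetminus p\zz$. The first task is to describe the order-$p$ generators $s_l' \in \PGL_2(K)$ associated to each pair $\{a_l', b_l'\}$. By \Cref{prop order-p elliptic}(a), I may, without loss of generality, take $s_l' = s_j^n s_l s_j^{-n}$ when $l \in I_{i,j}$ (since this conjugate has order $p$ and fixes $s_j^n(a_l) = a_l'$ and $s_j^n(b_l) = b_l'$) and $s_l' = s_l$ when $l \notin I_{i,j}$. A small auxiliary observation needed here is that $j \notin I_{i,j}$, so that $s_j' = s_j$: indeed, $\Lambda_j$ meets $[v_i,\tilde v]$ only at the endpoint $v$, not at any interior point. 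Granting this, the containment $\Gamma_0' \subseteq \Gamma_0$ is immediate, and the reverse containment follows because each $s_l$ is either equal to $s_l'$ or recoverable as $s_j^{-n} s_l' s_j^n$ using $s_j = s_j'$.

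For the equality $\Gamma = \Gamma'$, I will identify both groups as kernels of surjections $\Gamma_0 \twoheadrightarrow \zz/p$ that coincide. First, observing that $\gamma_{l,1} = s_l s_0^{-1} \in \Gamma$ gives $s_l \equiv s_0 \pmod{\Gamma}$ for every $l$, so $\Gamma_0/\Gamma$ is cyclic, generated by the common coset of $s_0$. Combined with the index equality $[\Gamma_0 : \Gamma] = p$ recorded in the excerpt, this produces a surjection $\psi\colon \Gamma_0 \twoheadrightarrow \zz/p$ with $\ker\psi = \Gamma$ and $\psi(s_l) = 1$ for every $l$. The parallel construction applied to $S'$ gives a surjection $\psi'\colon \Gamma_0 \twoheadrightarrow \zz/p$ with $\ker\psi' = \Gamma'$ and $\psi'(s_l') = 1$ for every $l$. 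These agree on each $s_l$: directly when $s_l' = s_l$, and via $\psi'(s_l) = \psi'(s_j^{-n} s_l' s_j^n) = -n + 1 + n = 1$ when $s_l' = s_j^n s_l s_j^{-n}$ (using $\psi'(s_j) = \psi'(s_j') = 1$). Hence $\psi = \psi'$, and so $\Gamma = \ker\psi = \ker\psi' = \Gamma'$.

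Finally, conditions (i) and (ii) of \Cref{dfn good} are intrinsic properties of the pair of abstract subgroups $\Gamma \lhd \Gamma_0$, independent of the labeling used to construct them. Since we have shown these groups are preserved under a single folding, $S$ is good under its labeling if and only if $S'$ is good under the induced labeling; the full biconditional over arbitrary labelings follows from \Cref{rmk clustered in pairs}, which asserts that any labeling witnessing goodness realizes the unique partition of the underlying set into pairs, so that the associated groups depend on the labeling only through the power-indeterminacy of \Cref{prop order-p elliptic}(a), which does not affect $\Gamma$ or $\Gamma_0$. The main delicacies I anticipate are the auxiliary observation $j \notin I_{i,j}$ and the identification of $\Gamma$ (and $\Gamma'$) as an explicit kernel via the index equality; once those are in hand, the rest is bookkeeping.
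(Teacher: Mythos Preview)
Your approach is essentially the paper's: reduce to a single folding, write each $s_l'$ as $s_l$ or its conjugate $s_j^n s_l s_j^{-n}$, use $j \notin I_{i,j}$ for $\Gamma_0' = \Gamma_0$, and handle $\Gamma' = \Gamma$ via the ``total exponent modulo $p$'' idea. The paper phrases that last step directly---any word in the $s_l$ with exponent sum divisible by $p$ lies in $\Gamma$, so the generators $(s_0')^{m-1} s_l' (s_0')^{-m}$ of $\Gamma'$ (which expand to exponent-sum-zero words in the $s_l$) lie in $\Gamma$, and symmetry gives the reverse inclusion---whereas you repackage the same idea as $\Gamma = \ker\psi$ for a homomorphism $\psi\colon\Gamma_0\to\zz/p\zz$ with $\psi(s_l)=1$.

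There is one small gap in your version. You invoke $[\Gamma_0:\Gamma]=p$ as ``recorded in the excerpt'', but the paper only asserts this where $\Gamma_0$ is already known to be the free product of $g+1$ copies of $\zz/p\zz$ (equivalently, $S$ is good); it is not established for arbitrary $S$ merely clustered in $\tfrac{v(p)}{p-1}$-separated pairs. Without it, your $\psi$ need not be a well-defined homomorphism with kernel exactly $\Gamma$. The repair is easy: since $\Gamma_0/\Gamma$ is cyclic generated by the class of $s_0$, the index is $1$ or $p$, and a short case split (or simply reverting to the paper's one-sided implication ``exponent sum $\equiv 0$ implies membership in $\Gamma$'', which needs no well-definedness of $\psi$) closes the argument. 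Your justification of $j\notin I_{i,j}$ is also slightly off---$v$ is not an endpoint of $[v_i,\tilde v]$ when the residue characteristic equals $p$---but the correct reason is that $[\eta_{a_j},\tilde v]\subset\hat\Lambda_j$ while the interior of $[v_i,\tilde v]$ is disjoint from $\hat\Lambda_j$ by $\tfrac{v(p)}{p-1}$-separatedness.
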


\begin{proof}

Assume all of the notation used in \Cref{dfn folding}.  It is clear from the definition of folding equivalence in \Cref{dfn folding} that we may assume the existence of a folding $\phi_{i, j, n} : S \to S'$.  It is straightforward to see that for $l \in I_{i, j}$ (resp. $l \notin I_{i, j}$), we have $s_l' = s_j^n s_l s_j^{-n}$ (resp. $s_l' = s_l$).  From this it is clear that we have the inclusion $\Gamma_0' \subseteq \Gamma_0$.  At the same time, we get the reverse inclusion $\Gamma_0 \subseteq \Gamma_0'$ from the fact that $j \notin I_{i, j}$ and so we have $s_l = (s_j')^{-n} s_l' (s_j')^{n}$ for all $l \in I_{i, j}$.

Meanwhile, we claim that any element of $\Gamma_0 = \Gamma_0'$ which can be written as a product of powers of generators $s_l$ such that the sum of the exponents is $0$ lies in $\Gamma \lhd \Gamma_0$.  Indeed, this follows immediately from checking that for $1 \leq l \leq g$ we have $s_l^n s_0^{-n} = \gamma_{l, 1} \cdots \gamma_{l, n} \in \Gamma$ (with notation as in (\ref{eq Gamma})) and so every element of the form $s_l^n s_j^{-n}$ lies in $\Gamma$.  Now since each generator $(s_0')^{j - 1}s_i'(s_0')^{-j}$ of $\Gamma'$ satisfies this condition on exponents, we get the inclusion $\Gamma' \subseteq \Gamma$.  The reverse inclusion $\Gamma \subseteq \Gamma'$ follows from a similar symmetry argument as was used to show that $\Gamma_0 \subseteq \Gamma_0'$.  It is now immediate from the equalities $\Gamma_0' = \Gamma_0$ and $\Gamma' = \Gamma$ that $S'$ is superelliptic if and only if $S$ is.
\end{proof}

The next property of foldings that we show (in \Cref{prop lengths decrease} below) is that the points in their reduced convex hulls $\Sigma_{S', 0}$ are at least as close to each other as the points they correspond to in the original convex hull $\Sigma_{S, 0}$, and in the case that a folding $\phi: S \to S'$ satisfies property (ii) of \Cref{dfn folding}, some of the distances decrease, suggesting that the image of such a folding is in some sense an ``improvement'' on $S$.  As a corollary, we will get that bad foldings as well as good foldings of $S$ satisfy property (ii) (\Cref{cor bad foldings satisfy (ii)} below).  The property of being either a good folding or a bad folding is clearly an important one; we will often refer to a good folding or a bad folding of a set $S$ as a ``good or bad folding'' of $S$, and a number of hypotheses or conclusions in further results will involve a map $\phi: S \to S'$ either being a good or bad folding of $S$ or not being a good or bad folding of $S$.

\begin{prop} \label{prop lengths decrease}

Let $\phi = \phi_{i, j, n}: S \to S'$ be a folding and let $\phi_* : \Sigma_{S, 0} \to \Sigma_{S'}$ be the induced map defined as in \Cref{rmk folding}.

\begin{enumerate}[(a)]

\item For any vertices $w_1, w_2 \in \Sigma_{S, 0}$, we have 
\begin{equation} \label{eq lengths decrease inequality}
\delta(\phi_*(w_1), \phi_*(w_2)) \leq \delta(w_1, w_2).
\end{equation}

\item If the folding $\phi$ does not satisfy property (ii) in \Cref{dfn folding}, then equality holds in (\ref{eq lengths decrease inequality}) for all vertices $w_1, w_2 \in \Sigma_{S, 0}$.

\item If the folding $\phi$ does satisfy property (ii), then the inequality in (\ref{eq lengths decrease inequality}) is strict for precisely those pairs of vertices $w_1, w_2$ satisfying the following property: defining the subtree $T_{i, j} \subset \Sigma_{S, 0}$ as in \Cref{rmk folding} and defining the subtree $T_{i, j}' \subset \Sigma_{S', 0}$ to be the union of $\{\tilde{v}\}$ with the largest connected subspace $\tilde{T}_{i, j}' \subset \Sigma_{S', 0}$ which intersects the path $[s_j^n(v'), \tilde{v}]$ but does not contain $\tilde{v}$, we have $w_1 \in T_{i, j}$ and $w_2 \in T_{i, j}' \cap \Sigma_{S, 0}$.

\end{enumerate}

\end{prop}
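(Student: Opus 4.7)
The plan is to reduce every case to the basic geometric observation that $s_j^n$ is an isometry of $\Berk$ (\Cref{prop pgl2 action on berk}) that fixes $\tilde{v}$ pointwise, since $\tilde{v} \in \hat{\Lambda}_j$ (it lies within distance $\frac{v(p)}{p-1}$ of $v \in \Lambda_j$) and hence is fixed by $s_j^n$ by \Cref{prop order-p elliptic}(b). The key structural fact I would establish first is that $\tilde{v}$ is the unique ``gateway'' of $T_{i,j}$: by unique path-connectedness of $\Berk$ (\Cref{rmk paths in berk}), any path joining a point of $T_{i,j} \smallsetminus \{\tilde{v}\}$ to a point of $\Sigma_{S,0} \smallsetminus T_{i,j}$ must pass through $\tilde{v}$.

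Given this, part (a) is a three-case analysis. If both $w_1, w_2 \in T_{i,j}$, then $\phi_{i,j}$ restricts to $s_j^n$ there and equality holds by the isometry property. If neither lies in $T_{i,j}$, then $\phi_{i,j}$ fixes both and again equality is immediate. In the mixed case, say $w_1 \in T_{i,j}$ and $w_2 \notin T_{i,j}$, the gateway observation gives $\delta(w_1, w_2) = \delta(w_1, \tilde{v}) + \delta(\tilde{v}, w_2)$; combining the triangle inequality in $\Berk$ with $\delta(s_j^n(w_1), \tilde{v}) = \delta(w_1, \tilde{v})$ (isometry plus fixed point) yields
\begin{equation*}
\delta(s_j^n(w_1), w_2) \leq \delta(s_j^n(w_1), \tilde{v}) + \delta(\tilde{v}, w_2) = \delta(w_1, w_2),
\end{equation*}
establishing (a).

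For parts (b) and (c), I would use the standard fact that the triangle inequality above is strict precisely when the path $[s_j^n(w_1), w_2]$ fails to pass through $\tilde{v}$, i.e.\ when the paths $[s_j^n(w_1), \tilde{v}]$ and $[w_2, \tilde{v}]$ share a nondegenerate initial segment out of $\tilde{v}$ (``backtracking''). Since $[s_j^n(w_1), \tilde{v}] = s_j^n([w_1, \tilde{v}])$ and $[w_1, \tilde{v}]$ leaves $\tilde{v}$ in the tangent direction toward $v_i$, the $s_j^n$-image leaves $\tilde{v}$ in the tangent direction toward $s_j^n(v_i)$. Hence strict inequality happens exactly when $[w_2, \tilde{v}]$ also leaves $\tilde{v}$ in that direction, i.e.\ when, inside $\Sigma_{S,0}$, the point $w_2$ is reachable from $\tilde{v}$ by first stepping in the $s_j^n(v_i)$-direction. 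If property (iii) fails, no such tangent direction exists in $\Sigma_{S,0}$ at $\tilde{v}$, so no backtracking is possible and equality holds universally; this gives (b). If property (iii) holds, such a direction does exist, and I would then identify the set of eligible $w_2$ with $T_{i,j}' \cap \Sigma_{S,0}$ to conclude (c).

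The main obstacle I anticipate is the last identification: verifying that $T_{i,j}' \cap \Sigma_{S,0}$ coincides with the collection of points $w_2 \in \Sigma_{S,0}$ whose geodesic to $\tilde{v}$ approaches it from the $s_j^n(v_i)$-direction. This requires carefully comparing the local structures of $\Sigma_{S,0}$ and $\Sigma_{S', 0}$ near $\tilde{v}$ and tracking which tangent directions survive folding; the content of \Cref{rmk folding} (that $\phi_{i,j}$ surjects onto $\Sigma_{S', 0}$ and fixes $\Sigma_{S,0} \smallsetminus T_{i,j}$) is what allows one to match the two descriptions, since outside $T_{i,j}$ the tangent direction at $\tilde{v}$ in $\Sigma_{S,0}$ agrees with that in $\Sigma_{S', 0}$. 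Once this local identification is made, (c) follows immediately from the backtracking criterion above.
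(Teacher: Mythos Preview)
Your proposal is correct and follows essentially the same approach as the paper: the same three-case split on membership in $T_{i,j}$, the same use of $\tilde v$ as the unique gateway together with the isometry $s_j^n$ fixing $\tilde v$, and the same dichotomy (whether or not $[s_j^n(w_1),w_2]$ passes through $\tilde v$, equivalently whether $w_2\in T_{i,j}'$) to separate the equality and strict-inequality cases. The paper organizes the last step slightly differently, first observing that if property (iii) fails then $T_{i,j}'\cap\Sigma_{S,0}=\{\tilde v\}$, which is exactly your ``no such tangent direction exists'' claim, and it too leaves the identification of $T_{i,j}'\cap\Sigma_{S,0}$ with the correct tangent direction at a comparable level of detail.
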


\begin{proof}

We adopt the notation used in \Cref{dfn folding}, in particular for the points $v$ and $\tilde{v}$.  We claim first of all that if the folding $\phi$ does not satisfy property (ii) in \Cref{dfn folding}, then we have $T_{i, j}' \cap \Sigma_{S, 0} = \{\tilde{v}\}$.  Indeed, if there is a point $w \in \Sigma_{S, 0}$ such that $\tilde{v} \neq w \in T_{i, j}'$, then by construction of the space $T_{i, j}'$, we have that the paths $[\tilde{v}, s_j^n(v')]$ and $[\tilde{v}, w] \subset T_{i, j}' \cap \Sigma_{S, 0}$ intersect at a sub-segment of positive length, from which it follows that the intersection $[v', s_j^n(v')] \cap \Sigma_{S, 0}$ contains a sub-segment of positive length with $\tilde{v}$ as its interior, which is property (ii).

For any $w_1, w_2 \notin T_{i, j}$, the desired equality is immediate from the fact that $\phi_*$ fixes these points, and for any $w_1, w_2 \in T_{i, j}$, we get the same desired equality by noting that $\delta(\phi_*(w_1), \phi_*(w_2)) = \delta(s_j^n(w_1), s_j^n(w_2)) = \delta(v, w)$.  Meanwhile, for any vertex $w \notin T_{i, j}$ of $\Sigma_{S, 0}$, we have $\delta(\phi_*(v), \phi_*(w)) = \delta(s_j^n(v), w) = \delta(v, w)$ since $s_j$ fixes $v \in \hat{\Lambda}_{(j)}$.

Now assume without loss of generality that $w_1 \in T_{i, j}$ and $w_2 \notin T_{i, j}$.  It is clear from the definition of $T_{i, j}$ that any path from $w_1$ to $w_2$ must pass through the point $\tilde{v}$ given in \Cref{dfn folding}, and so we have 
\begin{equation} \label{eq distances decrease}
\delta(w_1, w_2) = \delta(w_1, \tilde{v}) + \delta(w_2, \tilde{v}).
\end{equation}
If $w_2 \notin T_{i, j}'$, then any path from $\phi_*(w_1) = s_j^n(w_1)$ to $\phi_*(w_2) = w_2$ similarly must pass through $\tilde{v}$, and so we get 
\begin{align}
\begin{split}
\delta(\phi_*(w_1), \phi_*(w_2)) &= \delta(s_j^n(w_1), w_2) = \delta(s_j^n(w_1), \tilde{v}) + \delta(w_2, \tilde{v}) \\
= \delta(s_j^n(w_1), s_j^n(\tilde{v})) &+ \delta(w_2, \tilde{v}) = \delta(w_1, \tilde{v}) + \delta(w_2, \tilde{v}) = \delta(w_1, w_2),
\end{split}
\end{align}
 using (\ref{eq distances decrease}) and the fact that $s_j$ fixes $\tilde{v} \in \hat{\Lambda}_{(j)}$.  Moreover, our above observation that $T_{i, j}' \cap \Sigma_{S, 0} = \{\tilde{v}\}$ implies that if the folding $\phi$ does not satisfy property (ii), we must have $w_2 \notin T_{i, j}'$, and so we have already proved part (a).
 
If on the other hand we have $w_2 \in T_{i, j}'$, then the intersection $\Sigma_{S, 0} \cap [w_1, w_2]$ must pass through $\tilde{v}$ and have nontrivial intersection with the path $[v_i, s_j^n(v_i)]$.  Then, after possibly replacing the endpoints $\tilde{v}_{-\epsilon}, \tilde{v}_\epsilon$ with those of a smaller segment containing $\tilde{v}$ in its interior, we get $[\tilde{v}_{-\epsilon}, \tilde{v}_\epsilon] \subset [v_i, s_j^n(v_i)]$ by property (ii) of \Cref{dfn folding}.  Then we have (again using (\ref{eq distances decrease})) 
\begin{align}
\begin{split}
\delta(\phi_*(w_1), \phi_*(w_2)) &= \delta(s_j^n(w_1), w_2) \leq \delta(s_j^n(w_1), \tilde{v}_\epsilon) + \delta(w_2, \tilde{v}_\epsilon) \\
 &< \delta(s_j^n(w_1), \tilde{v}) + \delta(w_2, \tilde{v}) = \delta(w_1, \tilde{v}) + \delta(w_2, \tilde{v}) = \delta(w_1, w_2).
\end{split}
\end{align}
This proves part (b).
\end{proof}

\begin{cor} \label{cor bad foldings satisfy (ii)}

If $\phi: S \to S'$ is a bad folding, then it satisfies property (ii) of \Cref{dfn folding}.  Equivalently, a folding $\phi: S \to S'$ is a good or bad folding if and only if it satisfies property (ii) of \Cref{dfn folding}.

\end{cor}

\begin{proof}

Suppose that $\phi = \phi_{i, j, n}: S \to S'$ is a folding that does not satisfy property (ii) of \Cref{dfn folding}; to prove the corollary, we need to show that $\phi$ is not a bad folding, or equivalently, that $S'$ is clustered in $\frac{v(p)}{p - 1}$-separated pairs.  By \Cref{prop lengths decrease}(a), we have that the map $\phi_* : \Sigma_{S, 0} \to \Sigma_{S', 0}$ defined in \Cref{rmk folding} preserves distances between points.  The convex hull $\Sigma_{S'}$ contains the axes $\Lambda_{a_l', b_l'}$ for $0 \leq i \leq g$, where each axis $\Lambda_{a_l', b_l'}$ is the image of the axis $\Lambda_{(l)}$ under either the identity map or the map $s_j^n$, so the distinguished vertices of $\Sigma_{S', 0}$ (lying in an axis $\Lambda_{a_l', b_l'}$) are images under $\phi_*$ of the distinguished vertices of $\Sigma_{S, 0}$ (lying in an axis $\Lambda_{(l)}$).  Using \Cref{prop reduced convex hull}(d) one can deduce the existence of a non-backtracking path with non-empty interior in $\Sigma_S$ between any pair of distinguished vertices of $\Sigma_{S', 0}$ lying in different axes, and so the analogous statement holds for $S'$.  By uniqueness of (non-backtracking) paths (see \Cref{rmk paths in berk}), we then get that the axes $\Lambda_{a_l', b_l'}$ are mutually disjoint, so the set $S'$ is clustered in pairs.  By \Cref{prop reduced convex hull}(f), the distance between any distinct distinguished vertices in the same component of $\Sigma_{S, 0}$ is $> 2\frac{v(p)}{p - 1}$, and therefore the same is true for $\Sigma_{S', 0}$.  It is then easy to deduce using \Cref{prop reduced convex hull}(a), and the fact that distinguished vertices in the same axis must lie in different components of $\Sigma_{S', 0}$ by \Cref{prop reduced convex hull}(d), that we must have $\delta(\Lambda_{a_l', b_l'}, \Lambda_{a_m', b_m'}) > 2\frac{v(p)}{p - 1}$ for distinct indices $l, m$.  The desired conclusion that $S'$ is clustered in $\frac{v(p)}{p - 1}$-separated pairs directly follows.
\end{proof}

In the following proposition and its proof, we will refer to a partial order on $\Berk$ defined as in \cite[\S1.4.1]{baker2008introduction}, where in particular, points in $\Berk$ of Type II and III are ordered according to inclusion of their corresponding subsets of $\cc_K$, and given distinct points $\eta, \eta' \in \Berk \smallsetminus \{\eta_\infty\}$, we have $\eta > \eta'$ if and only if $\eta$ lies in the interior of the path $[\eta', \eta_{\infty}]$.  We observe that all points greater than a fixed point $\eta \in \Berk$ lie in the same well-ordered path $[\eta, \eta_\infty] \subset \Berk$ and freely use this fact below.

\begin{prop} \label{prop good or bad folding}

Let $S \subset \proj_K^1$ be a subset which is clustered in $\frac{v(p)}{p - 1}$-separated pairs with all of its associated objects.  Let $\eta \in \Berk$ be any point, and suppose that for some index $j$ and some $n \in \zz \smallsetminus p\zz$, the path $[\eta, s_j^n(\eta)] \subset \Berk$ contains a sub-segment $[\tilde{v}_{-\epsilon}, \tilde{v}_\epsilon] \subset \Sigma_{S, 0}$ with $\tilde{v}$ in its interior, where $\tilde{v}$ is the closest point in $\hat{\Lambda}_{(j)}$ to $\eta$.  Then there is a good or bad folding $\phi = \phi_{i, j, m} : S \to S'$ for some index $i$ and with $m \in \{\pm n\}$.  We may moreover choose $i$ so that we have $v_i < \tilde{v}$.

\end{prop}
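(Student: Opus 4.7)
The strategy is to use the geometry near $\tilde{v}$ to exhibit a suitable pair $\{a_i, b_i\}$ lying on one side of $\tilde{v}$, then verify property (iii) of \Cref{dfn folding} so that \Cref{cor bad foldings satisfy (iii)} produces the good or bad folding. By \Cref{prop loxodromic action on berk}(d), the path $[\eta, s_j^n(\eta)]$ is the non-backtracking concatenation $[\eta, \tilde{v}] \cup [\tilde{v}, s_j^n(\eta)]$, with $s_j^n$ mapping $[\eta, \tilde{v}]$ isometrically onto $[s_j^n(\eta), \tilde{v}]$ and fixing $\tilde{v}$. After shrinking $\epsilon$ if necessary we may take $\tilde{v}_{-\epsilon} \in [\eta, \tilde{v}]$ and $\tilde{v}_\epsilon = s_j^n(\tilde{v}_{-\epsilon})$, so $\tilde{v}_{\pm \epsilon}$ determine distinct directions at $\tilde{v}$ both pointing off $\hat{\Lambda}_j$. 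If $\tilde{v}$ were Type III, only two directions in $\Berk$ would be available, which $s_j^n$ would either fix (causing the path to backtrack at $\tilde{v}$) or swap (inconsistent with the $s_j^n$-fixed direction along $\hat{\Lambda}_j$ toward $\Lambda_j$), so $\tilde{v}$ is Type II. Since at most one direction at $\tilde{v}$ is upward toward $\eta_\infty$, after replacing $n$ by $-n$ if necessary we may assume $\tilde{v}_{-\epsilon}$ lies strictly below $\tilde{v}$ in the partial order.

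Next I extract the pair. Let $\mathcal{B}$ be the connected component of $\Berk \smallsetminus \{\tilde{v}\}$ containing $\tilde{v}_{-\epsilon}$. The $\frac{v(p)}{p-1}$-separatedness of $S$ (\Cref{dfn clustered in pairs}) forbids any pair $\{a_l, b_l\}$ from straddling $\tilde{v}$: such a straddling would force $\tilde{v} \in \Lambda_l$, giving $\tilde{v} \in \hat{\Lambda}_l \cap \hat{\Lambda}_j$ which contradicts the separation unless $l = j$; and $\{a_j, b_j\}$ is reachable from $\tilde{v}$ only through $\hat{\Lambda}_j$, putting both points outside the perpendicular branch $\mathcal{B}$. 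Since $\tilde{v}_{-\epsilon} \in \Sigma_S$ forces $\mathcal{B} \cap S \neq \varnothing$, some pair $\{a_i, b_i\}$ lies entirely in $\mathcal{B}$, with $i \neq j$; its meeting point $v_i \in \Lambda_i$ also sits in $\mathcal{B}$ (equality $v_i = \tilde{v}$ is excluded by the same separation argument applied to $\Lambda_i$ and $\hat{\Lambda}_j$), so $v_i < \tilde{v}$.

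We further require $v_i$ to lie in the component $C$ of $\Sigma_{S,0}$ containing $\tilde{v}$, which we achieve by selecting $v_i$ among the leaves of the subtree $T$ of $\Sigma_{S,0}$ obtained by following $\Sigma_{S,0}$ outward from $\tilde{v}$ in the direction of $\tilde{v}_{-\epsilon}$. Those leaves are all of the form $v_l$ (not odd-cluster distinguished vertices): any odd-cluster vertex $w$ strictly below $\tilde{v}$ on some $\Lambda_l$ has its upward edge's interior in $\Sigma_{S,1}$, since removing such a point yields an odd/odd split of $S$; so $w$ cannot be joined to $\tilde{v}$ inside $\Sigma_{S,0}$. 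The distinguished vertex $v$ on $\Lambda_j$ required by \Cref{dfn folding} is then the closest point of $\Lambda_j$ to $v_i$, and it lies in $C$: the non-straddling argument shows that interior points of the segment $[\tilde{v}, v]$ separate $S$ into two even-cardinality pieces (one containing $\{a_j, b_j\}$ together with the pairs accessed from $v$ not through $\tilde{v}$, the other containing the remaining pairs including $\{a_i, b_i\}$), so this segment lies in $\Sigma_{S,0}$ and $v$ is genuinely a metric-graph vertex of $\Sigma_{S,0}$ by being a branching point of $\Sigma_S$.

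Finally, \Cref{prop loxodromic action on berk}(d) applied to $v_i$ yields $[v_i, s_j^n(v_i)] = [v_i, \tilde{v}] \cup [\tilde{v}, s_j^n(v_i)]$, approaching $\tilde{v}$ from the $\tilde{v}_{-\epsilon}$ direction (since $v_i \in T$) and leaving in the direction $s_j^n(\tilde{v}_{-\epsilon}) = \tilde{v}_\epsilon$. For small enough $\epsilon' > 0$ we therefore have $[\tilde{v}_{-\epsilon'}, \tilde{v}_{\epsilon'}] \subset [v_i, s_j^n(v_i)] \cap [\tilde{v}_{-\epsilon}, \tilde{v}_\epsilon] \subset [v_i, s_j^n(v_i)] \cap \Sigma_{S,0}$, with $\tilde{v}$ in its interior, giving property (iii) of \Cref{dfn folding}; then \Cref{cor bad foldings satisfy (iii)} produces the desired good or bad folding. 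The main obstacle is the work in the second and third paragraphs: the careful $\frac{v(p)}{p-1}$-separation analysis needed both to rule out straddling pairs at $\tilde{v}$ and to place the required distinguished vertex $v$ on $\Lambda_j$ in the same component of $\Sigma_{S,0}$ as $v_i$.
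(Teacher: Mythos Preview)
Your proof is correct. The route differs from the paper's mainly in how the index $i$ is located. The paper sets $\xi$ to be the closest point of $\Sigma_{S,0}$ to $\eta$, argues (via \Cref{prop reduced convex hull}(b)) that $\xi$ is a distinguished vertex, and then uses the same parity observation you make to show $\xi$ is of the form $v_i$ rather than an odd-cluster vertex; the containment $[v_i, s_j^n(v_i)] \subseteq [\eta, s_j^n(\eta)]$ from \Cref{prop loxodromic action on berk}(d) then delivers property (iii) in one line. You instead work inside the branch $\mathcal{B}$, select $v_i$ as a leaf of the subtree $T$, and verify property (iii) by matching tangent directions at $\tilde{v}$. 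Your argument is longer but is more explicit in places the paper passes over quickly: you spell out the non-straddling argument at $\tilde{v}$ and the verification that the required distinguished vertex $v \in \Lambda_j$ actually lies in the same component of $\Sigma_{S,0}$ as $v_i$ (the paper introduces $v$ at the outset without justifying this). Both proofs ultimately hinge on the same key fact: an odd-cluster distinguished vertex strictly below $\tilde{v}$ is separated from $\tilde{v}$ inside $\Sigma_{S,0}$ because the segment immediately above it lies in $\Sigma_{S,1}$.
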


\begin{proof}

Assume the hypotheses of the statement, and let $\xi \in \Sigma_{S, 0}$ be the closest point in $\Sigma_{S, 0}$ to $\eta$.  Let $v \in \Sigma_{S, 0}$ be the (unique thanks to \Cref{prop reduced convex hull}(d)) distinguished vertex on the same component as $\xi$ which lies in the axis $\Lambda_{(j)}$.

Assume for now that we have $\xi < \tilde{v}$ under the partial order discussed above.  Then it follows from the real tree structure of $\Sigma_{S, 0}$ and from \Cref{prop reduced convex hull}(c) that $\xi$ is a distinguished vertex lying in the same connected component of $\Sigma_{S, 0}$ as $\tilde{v}$.  Let $i$ be the index such that we have $\xi \in \Lambda_{(i)}$.  Let $D' \subset \cc_K$ be the disc such that $v' = \eta_{D'}$.  By our above observations on the parial order, any sufficiently small disc $D'' \supsetneq D'$ satisfies that $\eta_{D''}$ is an interior point of $[\xi, v]$.  If the intersection $D' \cap A$ were an odd-cardinality cluster of $A$, then the same could be assumed true of $D'' \cap A$ and then we would have $\eta_{D''} \notin \Sigma_{S, 0}$; thus, we have that $\#(D' \cap A)$ is even and therefore $\xi = v_i$.

Now we show that the folding $\phi_{i, j, n}$ of $S$ is a good or bad folding.  It follows from \Cref{prop loxodromic action on berk}(d) that we have an inclusion of paths $[v_i, s_j^n(v_i)] \subseteq [\eta, s_j^n(\eta)]$.  Then the hypothesis that $[\tilde{v}_{-\epsilon}, \tilde{v}_\epsilon] \subseteq [\eta, s_j^n(\eta)]$ implies that, after possibly replacing the end points $\tilde{v}_{-\epsilon}, \tilde{v}_\epsilon$ with those of a smaller segment containing $\tilde{v}$ in its interior, we have $[\tilde{v}_{-\epsilon}, \tilde{v}_\epsilon] \subseteq [v_i, s_j^n(v_i)]$.   Thus, the folding in question satisfies property (ii) of \Cref{dfn folding}, and by \Cref{cor bad foldings satisfy (ii)}, it is a good or bad folding.  The proposition is thus proved in the case that $\xi < \tilde{v}$.

Now assume that we do not have $\xi < \tilde{v}$.  Then by our above observations on the partial order, after possibly replacing the end points $\tilde{v}_{-\epsilon}, \tilde{v}_\epsilon$ with those of a smaller segment containing $\tilde{v}$ in its interior, we have $\tilde{v}_{-\epsilon} > \tilde{v}$, and this means that we must have $\tilde{v}_\epsilon < \tilde{v}$ since the segment $[\tilde{v}_{-\epsilon}, \tilde{v}_\epsilon]$ is a non-backtracking path.  We therefore have $s_j^n(\xi) < \tilde{v}$, and essentially the same argument of the previous two paragraphs can be used to show that there is an index $i$ such that $v_i = s_j^n(\xi) < \tilde{v}$ and such that the folding $\phi_{i, j, -n}$ of $S$ is a good or bad folding.  This completes the proof of the proposition.
\end{proof}

\subsection{Optimal sets and finding them using good foldings} \label{sec 3 optimal}

Our strategy in trying to determine whether or not a set $S$ is superelliptic will hinge on attempting to ``fold'' $S$ into a set that satisfies the following condition.

\begin{dfn} \label{dfn optimal}

A $(2g + 2)$-element subset $S \subset \proj_K^1$ clustered in $\frac{v(p)}{p - 1}$-separated pairs is \emph{optimal} if there does not exist a good or bad folding of $S$.

\end{dfn}

In our below statements about an optimal set $S$, we implicitly assume the labelings $\{a_i, b_i\}$ for the pairs that $S$ is clustered in (the choice of which is uniquely determined by \Cref{rmk clustered in pairs}(b)), along with the associated notation for axes and distinguished vertices.

We are ready to state our main result of this paper, which characterizes a superelliptic set in terms of foldings of its reduced convex hull.

\begin{thm} \label{thm criterion for being superelliptic}
 
Let $S \subset \proj_K^1$ be a $(2g + 2)$-element subset clustered in $\frac{v(p)}{p - 1}$-separated pairs.  Then $S$ is $p$-superelliptic if and only if there exists an optimal set $\Smin$ which is folding equivalent to $S$.

Moreover, when these equivalent properties hold, the optimal set $\Smin$ also has the property that no distinct points of its reduced convex hull $\Sigma_{\Smin, 0}$ lie in the same orbit under the action of the Schottky group $\Gamma$ associated to $S$.

\end{thm}

In order to prove this theorem, we first need some lemmas.

\begin{lemma} \label{lemma algorithm terminates}

For any $(2g + 2)$-element subset $S \subset \proj_K^1$ clustered in $\frac{v(p)}{p - 1}$-separated pairs, there is no infinite sequence $S =: S_0, S_1, S_2, \dots$ of such subsets such that there is a good folding $\phi^{(n)}: S_{n - 1} \to S_n$ for all $n \geq 1$.  In other words, every sequence of such subsets where each is a good folding of the last must terminate.

\end{lemma}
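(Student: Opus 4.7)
The plan is to introduce a nonnegative integer-valued invariant on sets clustered in pairs which must strictly decrease under every good folding. My candidate is
\begin{equation*}
T(S) := \sum_{0 \leq l < m \leq g} \delta(v_l, v_m),
\end{equation*}
the sum of pairwise $\Berk$-distances between the Type II points $v_l = \eta_{D_l}$ introduced in \Cref{dfn D_i}. Because $S \subset \proj_K^1$, each disc $D_l$ is the minimal disc containing a finite subset of $K$, so its logarithmic radius lies in $v(K^\times) = \zz$, and therefore $\delta(v_l, v_m) \in \zz$ for every pair $l, m$. In particular $T(S) \in \zz_{\geq 0}$, so any strictly decreasing sequence of values of $T$ must terminate.

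Next I would verify the nonstrict inequality $T(S') \leq T(S)$ for any folding $S'$ of $S$. The key observation is that under the labeling of \Cref{dfn folding}, one has $v_l \in T_{i, j}$ if and only if $l \in I_{i, j}$, so the folding map $\phi_{i, j}:\Sigma_{S, 0} \to \Sigma_{S'}$ of \Cref{rmk folding} sends $v_l$ to $v_l'$ for every index $l$. The desired inequality then follows termwise from \Cref{prop lengths decrease}(a).

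The crux of the argument is upgrading this to a strict inequality when $S'$ is a good folding. By \Cref{cor bad foldings satisfy (iii)} a good folding satisfies property (iii) of \Cref{dfn folding}, so there is a sub-segment of $\Sigma_{S, 0}$ with $\tilde{v}$ in its interior that lies on the path $[v_i, s_j^n(v_i)]$. Following the half of this sub-segment lying on the opposite side of $\tilde{v}$ from $v_i$ through the finite metric graph $\Sigma_{S, 0}$, I would reach some distinguished vertex $v_m$; by construction $m \notin I_{i, j}$, and $v_m \in T_{i, j}' \cap \Sigma_{S, 0}$. \Cref{prop lengths decrease}(c) then gives $\delta(v_i', v_m') < \delta(v_i, v_m)$, so $T(S') < T(S)$. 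Together with integrality, this forces $T(S') \leq T(S) - 1$, and so an infinite sequence of good foldings would produce an infinite strictly descending sequence of nonnegative integers, which is impossible.

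The main obstacle I anticipate is the careful bookkeeping needed for the strict-inequality step: one must use the real-tree structure of $\Sigma_{S, 0}$ together with \Cref{prop reduced convex hull}(b)(c) to confirm that following the landing-side half of the shared sub-segment inside $\Sigma_{S, 0}$ actually terminates at a distinguished vertex $v_m$, and that this $v_m$ lies in the component $T_{i, j}'$ of $\Sigma_{S', 0}$ rather than in $T_{i, j}$. Once this local geometry near $\tilde{v}$ is checked, the reduction to an integer-valued strictly-decreasing invariant and the termination conclusion are immediate.
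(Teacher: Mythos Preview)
Your proposal is essentially the paper's own argument: track pairwise distances among distinguished vertices, apply \Cref{prop lengths decrease}(a)(c) for (strict) monotonicity, and use discreteness of the distances (they lie in $v(K^\times)$; cf.\ \Cref{prop reduced convex hull}(e)) to force termination. The only real difference is that the paper works with the multiset of distances among \emph{all} distinguished vertices of $\Sigma_{S,0}$ rather than just the $g+1$ points $v_l$; since $\phi_{i,j}$ carries $\Lambda_l \cap \Sigma_{S,0}$ into $\Lambda_l' \cap \Sigma_{S',0}$ it sends distinguished vertices to distinguished vertices, and the leaf reached on the landing side is then automatically one of them by \Cref{prop reduced convex hull}(b), which dissolves exactly the obstacle you flagged about verifying that the terminal vertex is some $v_m$.
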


\begin{proof}

Let $\{S_n\}_{n \geq 0}$ be such an infinite sequence, with good foldings $\phi^{(n)}$ for $n \geq 1$ as in the statement.  For each $n$, let $D_n$ be the $\frac{g(g + 1)}{2}$-element multiset of distances in $S_n$ between each pair of distinct distinguished vertices of $\Sigma_{S_n, 0}$.  \Cref{prop lengths decrease}(a) implies that for each $n \geq 0$, there is a bijection from the multiset $D_n$ to the multiset $D_{n+1}$ sending each element of $D_n$ to an equal or lesser positive rational number.  Meanwhile, since $\phi^{(n)}$ is a good folding, \Cref{prop lengths decrease}(c) implies that some distance decreases (as the trees $T_{i, j}, T_{i, j}'$ defined in that proposition by construction each contain some distinguished vertex of $\Sigma_{S, 0}$) and so we even have $D_n \neq D_{n + 1}$.  But these elements all lie in the fixed discrete subgroup $v(K^\times) \subset \qq$ by \Cref{prop reduced convex hull}(f), and this immediately implies the desired contradiction.
\end{proof}

\begin{lemma} \label{lemma off the convex hull}

Suppose that $S \subset \proj_K^1$ is an optimal subset.  Let $\eta \in \Berk$ be any point of Type II or III, and let $\xi$ be the closest point in the reduced convex hull $\Sigma_{S, 0}$ to $\eta$.  Let $j \in \{0, \dots, g\}$ be an index such that $\xi \notin \hat{\Lambda}_{(j)}$, and let $\tilde{v}$ be the closest point in $\hat{\Lambda}_{(j)}$ to $\eta$.
Then for any exponent $n \in \zz \smallsetminus p\zz$, we have 
\begin{equation}
\delta(s_j^n(\eta), \Sigma_S) = \delta(s_j^n(\eta), \hat{\Lambda}_{(j)}) = \delta(\eta, \xi) + \delta(\xi, \hat{\Lambda}_{(j)}) > \delta(\eta, \Sigma_S),
\end{equation}
 and the closest point in $\Sigma_S$ to $s_j^n(\eta)$ is $\tilde{v} \in \hat{\Lambda}_{(j)}$.

\end{lemma}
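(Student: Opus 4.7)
The plan is to combine \Cref{prop loxodromic action on berk}(d) with the optimality hypothesis, invoked via \Cref{prop good or bad folding}, to constrain the tree-theoretic configuration of $\eta$, $\xi$, $\tilde{v}$, and $s_j^n(\eta)$ in $\Berk$.

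By \Cref{prop loxodromic action on berk}(d), the path $[\eta, s_j^n(\eta)]$ decomposes as $[\eta, \tilde{v}] \cup [\tilde{v}, s_j^n(\eta)]$ without backtracking, with $s_j^n$ mapping $[\eta, \tilde{v}]$ isometrically onto the reversal of $[\tilde{v}, s_j^n(\eta)]$. Since $s_j^n$ fixes $\hat{\Lambda}_j$ pointwise and hence preserves distances to $\hat{\Lambda}_j$, the point $\tilde{v}$ is also the unique closest $\hat{\Lambda}_j$-point from $s_j^n(\eta)$, with $\delta(s_j^n(\eta), \tilde{v}) = \delta(\eta, \tilde{v})$.

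The key step is to establish that $\xi$ lies on the path $[\eta, \tilde{v}]$. Letting $m$ denote the median of $\eta$, $\xi$, $\tilde{v}$ in the tree $\Berk$, this amounts to showing $m = \xi$. In each alternative configuration (namely $m = \tilde{v}$, so that $\tilde{v} \in [\eta, \xi]$ strictly and hence $\tilde{v} \notin \Sigma_{S, 0}$, or $m$ is a generic branch point distinct from $\xi$ and $\tilde{v}$), a direct computation shows that $\xi$ itself is a candidate for the closest $\Sigma_{S, 0}$-point to $s_j^n(\eta)$ at distance at most $\delta(\eta, \xi)$. By an auxiliary application of \Cref{prop good or bad folding} to a point carefully chosen on the path $[\eta, \tilde{v}]$ near $\tilde{v}$, combined with the structural information from \Cref{prop reduced convex hull} about how $\Sigma_{S, 0}$ intersects $\hat{\Lambda}_j$, one exposes a sub-segment of $\Sigma_{S, 0}$ on $[\eta, s_j^n(\eta)]$ with $\tilde{v}$ in its interior, yielding a good or bad folding and contradicting optimality. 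Verifying this sub-segment claim carefully in each alternative configuration is the main obstacle, requiring a dissection of the outward directions at $\tilde{v}$ under the action of $s_j^n$ (which is a non-trivial cyclic rotation of order $p$ by \Cref{prop order-p elliptic}).

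Once $\xi \in [\eta, \tilde{v}]$ is established, a standard uniqueness argument---any closer $\hat{\Lambda}_j$-point from $\xi$ would also be closer from $\eta$ than $\tilde{v}$, contradicting the choice of $\tilde{v}$---gives $\delta(\xi, \hat{\Lambda}_j) = \delta(\xi, \tilde{v})$, so that $\delta(\eta, \tilde{v}) = \delta(\eta, \Sigma_{S, 0}) + \delta(\xi, \hat{\Lambda}_j)$. Since $s_j^n$ rotates outward directions at $\tilde{v}$ non-trivially, the path $[\tilde{v}, s_j^n(\eta)]$ leaves $\tilde{v}$ along a direction distinct from $[\tilde{v}, \xi]$, and a further application of \Cref{prop good or bad folding} forces the closest $\Sigma_{S, 0}$-point from $s_j^n(\eta)$ to lie on $\hat{\Lambda}_j$, reached from $s_j^n(\eta)$ via $\tilde{v}$ and giving the claimed distance formula. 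Finally, $s_j^n(\eta) \notin \hat{\Lambda}_j$ is equivalent to $\eta \notin \hat{\Lambda}_j$, which follows from the convexity of $\hat{\Lambda}_j$ as a subtree of $\Berk$ (being a tubular neighborhood of the subtree $\Lambda_j$): if $\eta$ were in $\hat{\Lambda}_j$, then $[\eta, \tilde{v}] \subseteq \hat{\Lambda}_j$, forcing $\xi \in \hat{\Lambda}_j$ via $\xi \in [\eta, \tilde{v}]$ and contradicting the hypothesis $\xi \notin \hat{\Lambda}_j$.
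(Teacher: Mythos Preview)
Your overall strategy matches the paper's---decompose $[\eta,s_j^n(\eta)]$ via \Cref{prop loxodromic action on berk}(d) and invoke optimality through \Cref{prop good or bad folding}---but the median case analysis you set up to locate $\xi$ on $[\eta,\tilde{v}]$ is both unnecessary and contains a genuine gap.

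The gap is in your treatment of the case $m=\tilde{v}$. There you deduce $\tilde{v}\notin\Sigma_{S,0}$ and then propose to ``expose a sub-segment of $\Sigma_{S,0}$ on $[\eta,s_j^n(\eta)]$ with $\tilde{v}$ in its interior'' in order to apply \Cref{prop good or bad folding}. But if $\tilde{v}\notin\Sigma_{S,0}$, no such sub-segment can exist, so this route is blocked; the same objection applies to your ``generic $m$'' case. You acknowledge this as ``the main obstacle'' but do not actually resolve it.

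The paper sidesteps the whole issue by first observing, \emph{without any appeal to optimality}, that $\tilde{v}\in\Sigma_{S,0}$. Pick any distinguished vertex $v\in\Lambda_j\cap\Sigma_{S,0}$. Since $v\in\Sigma_{S,0}$, the path $[\eta,v]$ passes through $\xi$; since $v\in\hat{\Lambda}_j$, it also passes through $\tilde{v}$. Thus $\eta,\xi,\tilde{v}$ are collinear on $[\eta,v]$ (so your ``generic median'' case never arises), and if the order were $\eta,\tilde{v},\xi,v$ then $\xi\in[\tilde{v},v]\subset\hat{\Lambda}_j$ by convexity of the tubular neighborhood---the very argument you invoke at the end for $s_j^n(\eta)\notin\hat{\Lambda}_j$---contradicting $\xi\notin\hat{\Lambda}_j$. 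Hence the order is $\eta,\xi,\tilde{v},v$, which gives both $\xi\in[\eta,\tilde{v}]$ and $\tilde{v}\in[\xi,v]$; the paper then uses \Cref{prop reduced convex hull}(c) to place $\xi$, $\tilde{v}$, and the closest $\Sigma_{S,0}$-point $w$ to $s_j^n(\eta)$ in a single component, so that $\tilde{v}\in\Sigma_{S,0}$. Optimality is now invoked just once: if $w\neq\tilde{v}$, then $\tilde{v}$ lies in the interior of the segment $[\xi,w]\subset\Sigma_{S,0}\cap[\eta,s_j^n(\eta)]$, and \Cref{prop good or bad folding} manufactures a good or bad folding, a contradiction. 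With $w=\tilde{v}$ the distance formula is immediate. Note finally that without knowing $\tilde{v}\in\Sigma_{S,0}$, your last paragraph yields only $\delta(s_j^n(\eta),\Sigma_{S,0})\geq\delta(\eta,\Sigma_{S,0})+\delta(\xi,\hat{\Lambda}_j)$, not the required equality.
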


\begin{proof}

Since $\xi \notin \hat{\Lambda}_{(j)}$, \Cref{prop reduced convex hull}(b) implies that we also have $\eta \notin \hat{\Lambda}_{(j)}$.  Then we must have $\tilde{v} \in \Sigma_{S, 0}$, because otherwise $[\eta, \tilde{v}] \cup [\tilde{v}, \xi]$ would be a path without backtracking; this would imply that $\xi$ is the closest point in $\Sigma_{S, 0}$ to $\tilde{v}$ and, by \Cref{prop reduced convex hull}(b), that would contradict the fact that $\xi \notin \hat{\Lambda}_{(j)}$.

Let $w$ be the closest point in $\Sigma_S$ to $s_j^n(\eta)$.  \Cref{prop loxodromic action on berk}(d) implies that the path $[\eta, s_j^n(\eta)] \subset \Berk$ contains $\tilde{v}$ in its interior and that $\tilde{v}$ is the closest point in $\hat{\Lambda}_{(j)}$ to $s_j^n(\eta)$; the points $\eta, \xi, \tilde{v}, w, s_j^n(\eta) \subset [\eta, s_j^n(\eta)]$ all then share the same closest point in the axis $\Lambda_{(j)}$.  Now \Cref{prop reduced convex hull}(d) implies that the points $\xi, \tilde{v}, w$ all lie in the same component of $\Sigma_{S, 0}$.  Suppose that $w \neq \tilde{v}$.  Then since $w$ is in the interior of the path from $s_j^n(\eta)$ to its closest point $\tilde{v}$ in $\hat{\Lambda}_{(j)}$, we have $w \notin \hat{\Lambda}_{(j)}$; \Cref{prop reduced convex hull}(a) then implies the existence of a point $w' \in [\tilde{v}, w]$ with $w' \in \Sigma_{S, 0}$.  Now $\tilde{v}$ lies in the interior of the path $[\xi, w'] \subset \Sigma_{S, 0}$, and since $S$ is assumed to be optimal, the inclusion of paths $[\tilde{v}_{-\epsilon} := \xi, \tilde{v}_\epsilon := w'] \subset [\eta, s_j^n(\eta)]$ is a contradiction due to \Cref{prop good or bad folding}.  Therefore, the point $\tilde{v} = w$ is itself the closest point in $\Sigma_S$ to $s_j^n(\eta)$.  We therefore have (using \Cref{prop order-p elliptic}(b)) 
\begin{equation}
\begin{aligned}
\delta(s_j^n(\eta), \Sigma_{S, 0}) = \delta(s_j^n(\eta), \tilde{v}) = \delta&(s_j^n(\eta), s_j^n(\tilde{v})) = \delta(\eta, \tilde{v}) = \delta(\eta, \xi) + \delta(\xi, \tilde{v}) \\
&= \delta(\eta, \xi) + \delta(\xi, \hat{\Lambda}_{(j)}) \geq \delta(\eta, \Sigma_S) + \delta(\xi, \hat{\Lambda}_{(j)}) > \delta(\eta, \Sigma_S).
\end{aligned}
\end{equation}
\end{proof}

\begin{lemma} \label{lemma off the convex hull2}

Suppose that $S \subset \proj_K^1$ is an optimal subset.  Choose any $\eta \in \Berk$ of Type II or III and any nontrivial element $\gamma \in \Gamma_0$, which we write as a product 
\begin{equation} \label{eq word}
\gamma = s_{i_t}^{n_t} s_{i_{t - 1}}^{n_{t - 1}} \cdots s_{i_1}^{n_1}
\end{equation}
 for some $t \geq 1$, some $n_1, \dots, n_t \in \zz \smallsetminus p\zz$, and some indices $i_l$ satisfying $i_l \neq i_{l - 1}$ for $2 \leq l \leq t$.
 
Suppose that the closest point in $\Sigma_S$ to $\eta$ does not lie in $\hat{\Lambda}_{(i_1)}$.  Then the closest point in $\Sigma_S$ to $\gamma(\eta)$ lies in $\hat{\Lambda}_{(i_t)}$, and in fact we have 
\begin{equation} \label{eq distance from gamma(eta) to hull}
\delta(\gamma(\eta), \Sigma_S) \geq \delta(\eta, \hat{\Lambda}_{(i_1)}) + \sum_{l = 2}^t \delta(\hat{\Lambda}_{(i_{l-1})}, \hat{\Lambda}_{(i_l)}) > \delta(\eta, \Sigma_S).
\end{equation}

\end{lemma}

\begin{proof}

We first observe that our hypothesis about the closest point in $\Sigma_S$ to $\eta$ implies that the closest point in $\Sigma_{S, 0}$ to $\eta$ also is not in $\hat{\Lambda}_{(i_1)}$ by \Cref{prop reduced convex hull}(b).  Moreover, our hypothesis that $S$ is clustered in $\frac{v(p)}{p-1}$-separated pairs says that we have $\hat{\Lambda}_{(i_l)} \cap \hat{\Lambda}_{(i_{l-1})} = \varnothing$ for $2 \leq l \leq t$.  It follows that for any such index $l$, if the closest point $\xi'$ in $\Sigma_{S, 0}$ to some point $\eta' \in \Berk$ lies in $\hat{\Lambda}_{(i_{l-1})}$, then we have $\xi' \notin \hat{\Lambda}_{(i_l)}$.  Using these observations, we may apply \Cref{lemma off the convex hull} to $\eta$ and inductively to intermediate points $\eta_l := s_{i_l}^{n_l} \cdots s_{i_1}^{n_1}(\eta)$ to get the formula in (\ref{eq distance from gamma(eta) to hull}) (the inequality there comes from the fact that the the closest point in $\Sigma_{S, 0}$ to $\eta$ does not lie in $\hat{\Lambda}_{(i_1)}$ and so we have $\delta(\eta, \hat{\Lambda}_{(i_1)}) > \delta(\eta, \Sigma_{S, 0})$), that we have $\gamma(\eta) \notin \hat{\Lambda}_{(i_t)}$, and that the closest point in $\Sigma_{S, 0}$ to $\gamma(\eta)$ lies in $\hat{\Lambda}_{(i_t)}$.
\end{proof}

\begin{cor} \label{cor optimal free}

Suppose that $S \subset \proj_K^1$ is an optimal subset.  Then there are no group relations among the associated order-$p$ elements $s_0, \dots, s_g$ apart from $s_0^p = \cdots = s_g^p = 1$.

\end{cor}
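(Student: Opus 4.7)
The plan is to reformulate the corollary in terms of free products: the statement that there are no relations among $s_0, \dots, s_g$ beyond $s_i^p = 1$ is equivalent to saying that $\Gamma_0$ is (isomorphic to) the free product $\langle s_0 \rangle * \cdots * \langle s_g \rangle$ of $g+1$ copies of $\zz/p\zz$, i.e.\ that the natural surjection from this free product onto $\Gamma_0$ is injective. By the normal form theorem for free products, this is in turn equivalent to showing that every reduced word
\begin{equation*}
\gamma = s_{i_t}^{n_t} s_{i_{t-1}}^{n_{t-1}} \cdots s_{i_1}^{n_1}
\end{equation*}
with $t \geq 1$, $n_l \in \{1, \dots, p-1\}$, and $i_l \neq i_{l-1}$ for $2 \leq l \leq t$ represents a nontrivial element of $\PGL_2(K)$. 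This is precisely the shape of word to which \Cref{lemma off the convex hull2} applies, so essentially all of the real work has already been absorbed there.

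To feed \Cref{lemma off the convex hull2} a suitable test point, I would pick any index $i' \in \{0, \dots, g\} \smallsetminus \{i_1\}$, which is possible since $g \geq 1$, and set $\eta := v_{i'}$. By \Cref{prop reduced convex hull}(c), $v_{i'}$ is itself a distinguished vertex of $\Sigma_{S, 0}$, so the closest point on $\Sigma_{S, 0}$ to $\eta$ is $v_{i'}$ itself. Because $S$ is clustered in $\frac{v(p)}{p-1}$-separated pairs, the subspaces $\hat{\Lambda}_{i'}$ and $\hat{\Lambda}_{i_1}$ are disjoint, and hence $v_{i'} \in \hat{\Lambda}_{i'}$ does not lie in $\hat{\Lambda}_{i_1}$. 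Thus the hypothesis of \Cref{lemma off the convex hull2} is satisfied for the word $\gamma$ and the test point $\eta$.

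Applying \Cref{lemma off the convex hull2} then gives $\gamma(\eta) \neq \eta$, so in particular $\gamma \neq 1$ in $\PGL_2(K)$. Since this holds for every nonempty reduced word, the natural surjection from $\langle s_0 \rangle * \cdots * \langle s_g \rangle$ to $\Gamma_0$ has trivial kernel, which is the desired statement about relations. I do not expect any real obstacle in this argument: the only mild subtlety is making sure to exploit the standing hypothesis $g \geq 1$ in order to have an index $i' \neq i_1$ available, which is exactly what places the test point $v_{i'}$ off of $\hat{\Lambda}_{i_1}$ and lets \Cref{lemma off the convex hull2} do the rest.
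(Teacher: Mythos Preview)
Your proposal is correct and follows essentially the same approach as the paper: both reduce to showing that any nonempty reduced word $\gamma$ is nontrivial by applying \Cref{lemma off the convex hull2} to a test point whose closest point on $\Sigma_{S,0}$ avoids $\hat{\Lambda}_{i_1}$. The only difference is that the paper simply asserts such a point exists, whereas you make the concrete (and valid) choice $\eta = v_{i'}$ for some $i' \neq i_1$.
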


\begin{proof}

Let $\gamma \subset \Gamma_0 = \langle s_0, \dots, s_g \rangle$ be any element written as a word as in (\ref{eq word}) in the statement of \Cref{lemma off the convex hull2} above for some $t \geq 1$; it suffices to show that $\gamma \neq 1$.  But this follows immediately from \Cref{lemma off the convex hull2}, as we can always choose a point $\eta \in \Berk$ whose closest point in the reduced convex hull $\Sigma_{S, 0}$ is not in $\hat{\Lambda}_{(i_1)}$, so that the conclusion of that lemma implies that $\gamma(\eta) \neq \eta$.
\end{proof}

\begin{lemma} \label{lemma optimal is superelliptic}

An optimal subset $S \subset \proj_K^1$ is superelliptic, and it has the property that no distinct points of its reduced convex hull $\Sigma_{S, 0}$ lie in the same orbit under the action of the associated Schottky group $\Gamma$.

\end{lemma}

\begin{proof}

Assume that $S$ is optimal, and let $\Gamma \lhd \Gamma_0 < \PGL_2(K)$ be the subgroups associated to $S$.  It is immediate from \Cref{cor optimal free} that no proper subset of $\{s_0, \dots, s_g\}$ generates $\Gamma_0 = \langle s_0, \dots, s_g \rangle$; to see that $S$ is superelliptic, we now only have to prove that every nontrivial element of $\Gamma$ is loxodromic.  We will show that given any point $\eta \in \Berk$ of Type II or III and any nontrivial element $\gamma \in \Gamma$, we have $\gamma(\eta) \neq \eta$, which by \Cref{cor loxodromic doesn't fix points} now implies the first statement of the lemma.  We will show moreover that for such a point $\eta$ and element $\gamma$, if $\eta \in \Sigma_S$, we have $\gamma(\eta) \notin \Sigma_S$, which is equivalent to the second statement of the lemma.

Choose any $\eta \in \Berk$ of Type II or III and any nontrivial element $\gamma \in \Gamma$, which we write as a word as in (\ref{eq word}).  We do not have $t = 1$, since then this element would lie in $\Gamma_0 \smallsetminus \Gamma$.  Moreover, after possibly replacing $\gamma$ with a conjugate (in $\Gamma_0 \rhd \Gamma$) of $\gamma$ (which does not affect the property of being loxodromic), we may assume that $i_t \neq i_1$.

First suppose that the closest point in $\Sigma_S$ to $\eta$ does not lie in $\hat{\Lambda}_{(i_1)}$.  Then \Cref{lemma off the convex hull2} implies that we have $\gamma(\eta) \neq \eta$ and that $\eta \in \Sigma_S$ implies $\gamma(\eta) \notin \Sigma_S$, and we are done.

In the case that $\eta \in \Sigma_S \cap \hat{\Lambda}_{(i_1)}$, we have $\gamma(\eta) = \gamma s_{i_1}^{-n_1}(\eta)$ and we may replace $\gamma$ with $\gamma s_{i_1}^{-n_1} = s_{i_t}^{n_t} \cdots s_{i_2}^{n_2}$; now the closest point in $\Sigma_S$ to $\eta$ does not lie in $\hat{\Lambda}_{i_2}$ since $\hat{\Lambda}_{(i_1)} \cap \hat{\Lambda}_{i_2} = \varnothing$ from the fact that $S$ is clustered in $\frac{v(p)}{p - 1}$-separated pairs, and we may use the argument in the previous paragraph to show that we again have $\gamma(\eta) \notin \Sigma_S$.  This completes the proof of the second statement of the lemma.

Now, letting $\gamma_{(r)} = s_{i_r}^{n_r} \cdots s_{i_1}^{n_1}$ for $1 \leq r \leq t$, suppose more generally that there is some $r$ such that the closest point in $\Sigma_S$ to $\gamma_{(r)}(\eta)$ does not lie in $\hat{\Lambda}_{r + 1}$.  Then, by the same use of \Cref{lemma off the convex hull2} applied to the element $\gamma_{(r)} \gamma \gamma_{(r)}^{-1} = s_{i_r}^{n_r} \cdots s_{i_1}^{n_1} s_{i_t}^{r_t} \cdots s_{i_{r+1}}^{n_{r+1}} \in \Gamma$ (which does not reduce to a shorter word thanks to our assumption that $i_t \neq i_1$) and point $\gamma_{(r)}(\eta)$, we get 
\begin{equation} \label{eq rotated word}
\gamma_{(r)} \gamma \gamma_{(r)}^{-1}(\gamma_{(r)}\eta) \neq \gamma_{(r)}(\eta).
\end{equation}
Multiplying both sides above by $\gamma_{(r)}^{-1}$ and simplifying yields $\gamma(\eta) \neq \eta$, and again we are done.

Finally, suppose that we are not in either of the above two cases.  Then in particular the closest point in $\Sigma_S$ to $\gamma_{(t - 1)}(\eta)$ lies in $\hat{\Lambda}_{(i_t)}$.  Then the closest point in $\Sigma_S$ to $\gamma_{(t - 1)}(\eta)$ does not lie in $\hat{\Lambda}_{(i_{t-1})}$, as we have $\hat{\Lambda}_{(i_t)} \cap \hat{\Lambda}_{(i_{t-1})} = \varnothing$ from the fact that $S$ is clustered in $\frac{v(p)}{p - 1}$-separated pairs.  Then, by the same use of \Cref{lemma off the convex hull2} applied to the element $s_{i_t}^{-n_t}\gamma^{-1}s_{i_t} = s_{i_t}^{-n_t} s_{i_1}^{-n_1} \cdots s_{i_{t-1}}^{n_{t-1}} \in \Gamma$ (which does not reduce to a shorter word thanks to our assumption that $i_t \neq i_1$) and the point $\gamma_{(t - 1)}(\eta)$, we get 
\begin{equation} \label{eq rotated inverted word}
s_{i_t}^{-n_t}\gamma^{-1}s_{i_t}(\gamma_{(t - 1)}(\eta)) \neq \gamma_{(t - 1)}(\eta).
\end{equation}
Multiplying both sides above by $s_{i_t}^{n_t}$ and simplifying yields $\eta \neq \gamma(\eta)$, and again we are done.
\end{proof}

\begin{lemma} \label{lemma bad foldings are bad}

Let $S \subset \proj_K^1$ be a subset which is clustered in $\frac{v(p)}{p - 1}$-separated pairs.  If there exists a bad folding of $S$, then $S$ is not superelliptic.

\end{lemma}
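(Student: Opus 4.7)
The plan is a proof by contradiction. Assume that $S' \subset \proj_K^1$ is a bad folding of $S$ with respect to the ordered pair of indices $(i, j)$ and with exponent $n \in \zz \smallsetminus p\zz$, and suppose for contradiction that $S$ is good, with associated subgroups $\Gamma \lhd \Gamma_0 < \PGL_2(K)$. Following the setup in the proof of \Cref{prop folding equivalence properties}, define $s_l' \in \PGL_2(K)$ by $s_l' := s_j^n s_l s_j^{-n}$ if $l \in I_{i, j}$ and $s_l' := s_l$ otherwise. Each $s_l'$ is a well-defined order-$p$ element of $\Gamma_0$ fixing the pair $\{a_l', b_l'\}$, noting that $a_l' \neq b_l'$ because $s_j^n$ is bijective on $\proj_K^1$. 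Repeating the algebra of that proof verbatim gives $\langle s_0', \dots, s_g' \rangle = \Gamma_0$ and $s_l'(s_m')^{-1} \in \Gamma$ for all indices $l, m$.

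The next step is to locate two indices whose associated tubes fail to be disjoint. Since $S'$ does not satisfy property (ii) of \Cref{dfn folding} and each individual pair $\{a_l', b_l'\}$ consists of two distinct points, this failure forces the existence of $l \neq m$ with $\hat{\Lambda}_l' \cap \hat{\Lambda}_m' \neq \varnothing$. The argument then splits into two cases. In Case 1, if $s_l' = s_m'$, then $\Gamma_0$ is generated by the $g$-element set $\{s_0', \dots, s_g'\} \smallsetminus \{s_l'\}$, which immediately contradicts condition (ii) of \Cref{dfn good}. In Case 2, when $s_l' \neq s_m'$, the element $\gamma := s_l'(s_m')^{-1}$ is a nontrivial element of the Schottky group $\Gamma$, and by \Cref{prop order-p elliptic}(b) the element $\gamma$ fixes every point of $\hat{\Lambda}_l' \cap \hat{\Lambda}_m'$.

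To finish Case 2, the plan is to exhibit a point $\eta \in \hat{\Lambda}_l' \cap \hat{\Lambda}_m'$ of Type II or III; this contradicts the loxodromicity of $\gamma$ (an immediate consequence of Schottky-ness) via \Cref{prop loxodromic action on berk}(b)(d), since loxodromic automorphisms fix no non-Type-I points. If the intersection already contains such a point we are done; otherwise it consists only of Type I points, which forces the axes $\Lambda_l'$ and $\Lambda_m'$ to share at least one Type I endpoint $\eta_a$, say with $a = a_l' = a_m'$. In this subcase the plan is to invoke the neighborhood basis at $\eta_a$ from \Cref{dfn berk}: for every sufficiently small closed disc $D \subset \bar{K}$ containing $a$ but excluding $b_l'$ and $b_m'$, the Type II or III point $\eta_D$ lies on both axes $\Lambda_l'$ and $\Lambda_m'$ (and hence in both tubes), furnishing the desired $\eta$. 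The main obstacle is precisely this last topological subcase, which requires the short observation that two distinct axes sharing a Type I endpoint initially travel along a common generic branch; all other configurations reduce directly to one of the Schottky or group-generation obstructions identified above.
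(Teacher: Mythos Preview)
Your proof is correct and follows essentially the same idea as the paper's, but the paper compresses the entire argument into a single citation of \Cref{prop clustered in pairs}: a bad folding produces indices $l \neq m$ (with $l \in I_{i,j}$, $m \notin I_{i,j}$, hence $l \neq j$) and an exponent $n \notin p\zz$ such that $s_j^n(\hat{\Lambda}_l) \cap \hat{\Lambda}_m \neq \varnothing$, and \Cref{prop clustered in pairs} says this is impossible when $S$ is good. What you have done is reprove that proposition inline by constructing the element $\gamma = s_l'(s_m')^{-1} \in \Gamma$ and showing it fixes a non-Type-I point; this is exactly the mechanism inside the proof of \Cref{prop clustered in pairs}.

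One simplification: your separate subcase where $\hat{\Lambda}_l' \cap \hat{\Lambda}_m'$ contains only Type~I points is unnecessary. As noted at the start of the proof of \Cref{prop clustered in pairs}, two non-disjoint such subspaces always meet at a point of Type~II or~III (indeed, the only Type~I points of $\hat{\Lambda}_l'$ are its two endpoints, and if one of these coincided with an endpoint of $\hat{\Lambda}_m'$ then, as you yourself observe, the two axes would share an initial segment of Type~II/III points anyway). So you can simply assert that the intersection contains a Type~II or~III point and invoke \Cref{cor loxodromic doesn't fix points} directly, skipping the topological detour through \Cref{dfn berk}.
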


\begin{proof}

Letting $s_0, \dots, s_g \in \PGL_2(K)$ be the order-$p$ elements associated to $S$, the existence of a bad folding of $S$ implies that for some indices $i, j, l \in \{0, \dots, g\}$ with $i \neq l$ and some $n \in \zz \smallsetminus p\zz$, we have $s_j^n(\hat{\Lambda}_{(i)}) \cap \hat{\Lambda}_{(l)} \neq \varnothing$.  Then according to \Cref{prop clustered in pairs}, the set $S$ is not superelliptic.
\end{proof}

We are finally ready to prove \Cref{thm criterion for being superelliptic}.

\begin{proof}[Proof (of \Cref{thm criterion for being superelliptic})]

Assume first that there is an optimal set $\Smin$ which is folding equivalent to $S$.  \Cref{lemma optimal is superelliptic} implies, by property (i) in \Cref{dfn superelliptic set}, that the subgroups $\Gamma \lhd \Gamma_0 < \PGL_2(K)$ induced by $\Smin$ are respectively Schottky and $p$-Whittaker.  According to \Cref{prop folding equivalence properties}, these are the same subgroups of $\PGL_2(K)$ as the ones associated to $S$, and so $S$ satisfies property (i) of \Cref{dfn superelliptic set}.  Now \Cref{cor optimal free} implies that $\Gamma_0$ is isomorphic to the free product of $g + 1$ copies of $\zz / p\zz$.  It is well known that such a group cannot be generated by fewer than $g + 1$ elements (see \cite{neumann1943number}), and so property (ii) of \Cref{dfn superelliptic set} is also satisfied for $S$; we conclude that $S$ is superelliptic.

Now assume conversely that $S$ is superelliptic.  We define a sequence of subsets $S_0, S_1, S_2, \dots \subset \proj_K^1$ recursively as follows.  Set $S_0 = S$, and for each $n \geq 0$, if there is a good folding of $S_n$, choose one and let $S_{n+1}$ be the image of $S_n$ under it.  Now \Cref{lemma algorithm terminates} implies that there is some $N \geq 0$ such that a set $S_N$ is defined but there is no good folding of $S_N$.  By \Cref{prop folding equivalence properties}, the set $S_N$ is also superelliptic.  Now applying \Cref{lemma bad foldings are bad} to $S_N$ shows that there does not exist a bad folding of $S_N$ either.  Therefore $\Smin := S_N$ is optimal, and we have finished proving of the first statement of the theorem.

The second statement of the theorem is simply a restatement of that of \Cref{lemma optimal is superelliptic} applied to $\Smin$ (noting again that $S$ and $\Smin$ determine the same Schottky group $\Gamma$ by \Cref{prop folding equivalence properties}(b)).
\end{proof}

In the course of proving \Cref{thm criterion for being superelliptic}, we have pointed the way toward proving some other useful facts, which we note in the remarks below.

\begin{rmk} \label{rmk free}

From our above arguments, none of which assumed any knowledge of the group structures of $\Gamma \lhd \Gamma_0$ \textit{a priori}, we are able in the superelliptic case to recover the well-known fact that Schottky groups are free.  Indeed, a superelliptic $(2g + 2)$-element set $S$ is folding equivalent to an optimal set $\Smin$ by \Cref{thm criterion for being superelliptic}, and we see from \Cref{cor optimal free} that the $p$-Whittaker group $\Gamma_0$ associated to $\Smin$ (which is also associated to $S$ by \Cref{prop folding equivalence properties}(b)) is isomorphic to the free product of $g + 1$ order-$p$ subgroups.  It is easy to deduce from this that there are no relations among the order-$p$ elements $s_i \in \PGL_2(K)$ which are associated to $S$ apart from $s_0^p = \cdots = s_g^p = 1$.  Then it is elementary to show (as is done in \cite[\S2]{van1982galois}) that the elements $\gamma_{i, j} \in \Gamma$ given in (\ref{eq Gamma}) freely generate $\Gamma$.  Thus, the Schottky group $\Gamma$ is a free group on $(p - 1)g$ generators, and so by \cite[Theorem III.2.2]{gerritzen2006schottky}, the resulting superelliptic curve $C \cong \Omega / \Gamma$ has genus equal to $(p - 1)g$.

\end{rmk}

\begin{rmk} \label{rmk converse}

Using the group structure of the $p$-Whittaker group $\Gamma_0$ associated to a superelliptic subset $S \subset \proj_K^1$ given by \Cref{rmk free}, it is easy to appropriately alter the argumentation in the proof of \Cref{prop clustered in pairs} to prove a variant (easily shown to be stronger) of the statement as follows: with the same set-up as in the statement, suppose that we have $\gamma(\hat{\Lambda}_{(i)}) \cap \hat{\Lambda}_{(j)} \neq \varnothing$ for some $\gamma \in \Gamma$ and $i, j \in \{0, \dots, g\}$.  Then we have $\gamma = 1$ and $i = j$.

The arguments in the proof of \Cref{thm criterion for being superelliptic} show that a converse to the above is also true: if $S \subset \proj_K^1$ is a subset such that for all elements $\gamma \in \Gamma$ in the associated Schottky group and for all pairs of distinct indices $i, j \in \{0, \dots, g\}$, we have $\gamma(\hat{\Lambda}_{(i)}) \cap \hat{\Lambda}_{(j)} = \varnothing$, then $S$ is superelliptic.  Indeed, given a set $S$ satisfying this disjointness hypothesis, we may construct a sequence of good foldings 
\begin{equation}
S =: S_0 \longrightarrow S_1 \longrightarrow \dots \longrightarrow S_N
\end{equation}
 as in that proof, such that there is no good folding of $S_N$.  Then it is not hard to see that the disjointness hypothesis implies that there is no bad folding of $S_N$ either, so that $\Smin := S_N$ is optimal.  Then \Cref{thm criterion for being superelliptic} says that $S$ is superelliptic.

\end{rmk}

\subsection{Special cases of superelliptic subsets of $\proj_K^1$} \label{sec 3 special cases}

As somewhat of a corollary to the methods and results we have obtained in the above subsection, we are now able to isolate several special situations in which it is easy to determine whether or not a subset $S \subset \proj_K^1$ is superelliptic, and in particular, we are able to recover a result of Kadziela.

\begin{prop} \label{prop valency}

Let $S \subset \proj_K^1$ be a subset which is clustered in $\frac{v(p)}{p - 1}$-separated pairs.  Suppose that there is a good or bad folding $\phi_{i, j, n}$ of $S$, and let $v \in \Lambda_{(j)}$ be the (unique thanks to \Cref{prop reduced convex hull}(d)) distinguished vertex lying in the same component of $\Sigma_{S, 0}$ as $v_i$.  Then if the residue characteristic of $K$ is not $p$, the distinguished vertex $v$ has valency $\geq 2$; if the residue characteristic of $K$ is $p$, there is a vertex $\tilde{v}$ which has valency $\geq 3$ and satisfies $\delta(v, \tilde{v}) = \frac{v(p)}{p - 1}$.

\end{prop}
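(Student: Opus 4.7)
The plan is to extract valency information from property (iii) of \Cref{dfn folding}, which by \Cref{cor bad foldings satisfy (iii)} must hold for any good or bad folding of $S$. This property supplies an exponent $n \in \zz \smallsetminus p\zz$ together with a sub-segment $[\tilde{v}_{-\epsilon}, \tilde{v}_\epsilon] \subset \Sigma_{S, 0}$ of $[v_i, s_j^n(v_i)]$ containing $\tilde{v}$ in its interior, where $\tilde{v}$ is the point on the path $[v_i, v]$ at distance $\tfrac{v(p)}{p-1}$ from $v$. I would then count distinct branches of $\Sigma_{S, 0}$ emanating from $\tilde{v}$ (or from $v$, when these coincide).

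First I would unpack the geometry of the path $[v_i, s_j^n(v_i)]$. Since $s_j^n$ fixes $\hat{\Lambda}_j$ pointwise (by \Cref{prop order-p elliptic}(b)) and $n \notin p\zz$ forces $s_j^n$ to act on the tangent directions at each point of $\hat{\Lambda}_j$ by a nontrivial order-$p$ rotation, \Cref{prop loxodromic action on berk}(d) shows that $[v_i, s_j^n(v_i)]$ is the concatenation of the two non-backtracking sub-paths $[v_i, \tilde{v}]$ and $[\tilde{v}, s_j^n(v_i)] = s_j^n([v_i, \tilde{v}])$, emanating from $\tilde{v}$ in two \emph{distinct} directions. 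Consequently $\tilde{v}_{-\epsilon}$ lies in the direction of $v_i$ from $\tilde{v}$, while $\tilde{v}_\epsilon$ lies in the direction of $s_j^n(v_i)$, and both witness branches of $\Sigma_{S, 0}$ at $\tilde{v}$.

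When the residue characteristic of $K$ is not $p$ we have $\tilde{v} = v$, and the two distinct branches just exhibited immediately give $v$ valency $\geq 2$. When the residue characteristic is $p$, the point $\tilde{v}$ lies strictly between $v_i$ and $v$ on the path $[v_i, v]$, which lies entirely in $\Sigma_{S, 0}$ by the unique path-connectedness of $\Berk$ (\Cref{rmk paths in berk}) since $v_i$ and $v$ are in the same component of $\Sigma_{S, 0}$ by hypothesis. I would then exhibit a third branch at $\tilde{v}$, namely the one toward $v$ continuing along $\hat{\Lambda}_j$; this is distinct from the branch toward $v_i$ because $\tilde{v}$ is an interior point of the non-backtracking path $[v_i, v]$, and distinct from the branch toward $s_j^n(v_i)$ because the latter leaves $\hat{\Lambda}_j$ immediately (using that $s_j^n(v_i) \notin \hat{\Lambda}_j$ by $s_j^n$-invariance of $\hat{\Lambda}_j$ combined with $v_i \notin \hat{\Lambda}_j$, which follows from the $\tfrac{v(p)}{p-1}$-separatedness of the pairs via \Cref{rmk clustered in pairs}(a)). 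Three pairwise distinct branches force $\tilde{v}$ to be a natural vertex of valency $\geq 3$.

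The main obstacle is rigorously separating the three directions at $\tilde{v}$ in the residue-characteristic-$p$ case. The cleanest route is through \Cref{prop loxodromic action on berk}(d), which simultaneously pins down the fixed locus $\hat{\Lambda}_j$ of $s_j$ and the nontriviality of its transverse rotation at boundary points of $\hat{\Lambda}_j$, together with the separatedness hypothesis which guarantees $\hat{\Lambda}_i \cap \hat{\Lambda}_j = \varnothing$ so that $v_i$ (and hence $s_j^n(v_i)$) sits off $\hat{\Lambda}_j$ by a definite distance.
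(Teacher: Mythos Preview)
Your proposal is correct and follows essentially the same route as the paper's proof: both invoke \Cref{cor bad foldings satisfy (iii)} to secure property~(iii), then use \Cref{prop loxodromic action on berk}(d) to see that the segment $[\tilde{v}_{-\epsilon}, \tilde{v}_\epsilon]$ meets $\hat{\Lambda}_j$ only at $\tilde{v}$, and finally count branches at $\tilde{v}$ (or at $v=\tilde{v}$ when the residue characteristic is not $p$). The only cosmetic difference is in how you separate the branch toward $v_i$ from the branch toward $v$ in the residue-characteristic-$p$ case: you use that $\tilde{v}$ is an interior point of the path $[v_i,v]\subset\Sigma_{S,0}$, whereas the paper treats both directions of $[\tilde{v}_{-\epsilon},\tilde{v}_\epsilon]$ uniformly as leaving $\hat{\Lambda}_j$ and hence distinct from the direction toward $v\in\Lambda_j$.
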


\begin{proof}

Assume the notation in \Cref{dfn folding}, in particular that $j$ is the index such that $v \in \Lambda_{(j)}$ and that $\tilde{v} \in \Sigma_{S, 0}$ is the closest point in $\hat{\Lambda}_{(j)}$ to $v'$.  A good or bad folding $\phi_{i, j, n}$ of $S$ satisfies condition (ii) in \Cref{dfn folding} by \Cref{cor bad foldings satisfy (ii)}; this implies in particular that there is a segment $[\tilde{v}_{-\epsilon}, \tilde{v}_\epsilon]$ of a non-backtracking path in $\Sigma_{S, 0}$ which contains the point $\tilde{v}$ in its interior.  If the residue characteristic of $K$ is not $p$, then we have $\tilde{v} = v$ and the path $[\tilde{v}_{-\epsilon}, \tilde{v}_\epsilon] \subset \Sigma_{S, 0}$ contains sub-segments of $2$ edges coming out of $v$, hence the desired statement.  If the residue characteristic of $K$ is not $p$, then since it follows from \Cref{prop loxodromic action on berk}(d) that $[\tilde{v}_{-\epsilon}, \tilde{v}_\epsilon] \cap \hat{\Lambda}_{(j)} = \{\tilde{v}\}$, this segment contains sub-segments of $2$ edges coming out of $\tilde{v}$ which are each distinct from the path connecting $\tilde{v}$ to the distinguished vertex $v$; the desired statement again follows.
\end{proof}

\begin{cor} \label{cor valency}

If the reduced convex hull $\Sigma_{S, 0}$ of a subset $S \subset \proj_K^1$ which is clustered in $\frac{v(p)}{p - 1}$-separated pairs satisfies the property that each of its distinguished vertices is a tail (\textit{i.e.} has valency $1$), then $S$ is optimal (and thus also superelliptic by \Cref{lemma optimal is superelliptic}).

\end{cor}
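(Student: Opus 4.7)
The plan is to prove the contrapositive. Suppose that $S$ is not optimal, so by \Cref{dfn optimal} there exists a good or bad folding of $S$ with respect to some ordered pair of indices $(i, j)$; my aim is to exhibit a distinguished vertex of $\Sigma_{S, 0}$ that is not a tail, contradicting the hypothesis.

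I would apply \Cref{prop valency} to the assumed folding. This yields, in the case where the residue characteristic of $K$ is not $p$, a distinguished vertex $v \in \Lambda_j \cap \Sigma_{S, 0}$ (the unique one in the component of $v_i$ by \Cref{prop reduced convex hull}(c)) of valency $\geq 2$, contradicting the hypothesis immediately. In the case where the residue characteristic of $K$ equals $p$, \Cref{prop valency} furnishes a vertex $\tilde{v}$ of the enhanced metric graph $\Sigma_{S, 0}$ having valency $\geq 3$ and satisfying $\delta(v, \tilde{v}) = \frac{v(p)}{p-1}$.

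In this second case, I argue that $\tilde v$ must itself be a distinguished vertex, yielding the contradiction. Invoking \Cref{prop reduced convex hull}(b) in combination with the tail hypothesis, every vertex of $\Sigma_{S, 0}$ has valency either $1$ (distinguished tail) or $\geq 3$ (natural branching), so no valency-$2$ vertex exists. In particular the unique edge at the tail $v$ extends to some other vertex $u$; \Cref{prop reduced convex hull}(e) shows that if $u$ were distinguished then the edge would have length exceeding $2 \cdot \frac{v(p)}{p - 1}$, forcing $\tilde v$ into the strict interior of the edge where it would have valency $2$ in the enhanced metric graph, contradicting its valency $\geq 3$.

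The main obstacle is then to rule out the remaining sub-case in which $u$ is a natural branching vertex with $\tilde v = u$ at distance exactly $\frac{v(p)}{p-1}$ from $v$. For this I would combine the sub-segment property (iii) of \Cref{dfn folding} with \Cref{prop loxodromic action on berk}(d), analyzing the action of $s_j^n$ on tangent directions at the boundary point $\tilde v \in \partial \hat\Lambda_j$ and showing that the $s_j^n$-reflected direction cannot correspond to an edge of $\Sigma_{S, 0}$ compatibly with the tail hypothesis for all distinguished vertices; this precludes property (iii) from being satisfied and yields the final contradiction.
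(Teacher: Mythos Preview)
Your overall strategy—argue the contrapositive and invoke \Cref{prop valency}—is exactly the paper's one-line proof (``This is immediate from \Cref{prop valency}''). In the residue-characteristic $\neq p$ case this is complete and matches the paper: \Cref{prop valency} hands you a distinguished vertex $v$ of valency $\geq 2$, contradicting the tail hypothesis directly.

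In the residue-characteristic $= p$ case the paper simply treats the conclusion of \Cref{prop valency} (a vertex $\tilde v$ of valency $\geq 3$ at distance $\tfrac{v(p)}{p-1}$ from $v$) as an equally immediate contradiction and says nothing further. You instead try to argue that $\tilde v$ must itself be distinguished, and this is where your proof has a genuine gap. First, your case split on the unique neighbor $u$ of the tail $v$ omits the possibility that $u$ is a natural vertex with $\delta(v,u) < \tfrac{v(p)}{p-1}$, in which case $\tilde v$ lies strictly beyond $u$ in the tree and your dichotomy does not locate it. Second, and more seriously, in the sub-case $\tilde v = u$ with $u$ natural, your proposed resolution via ``the action of $s_j^n$ on tangent directions at $\tilde v$'' is only a sketch: nothing in the tail hypothesis by itself prevents $\Sigma_{S,0}$ from having a natural branch vertex at $\tilde v$ with one edge toward $v_i$ and another edge in the direction of $s_j^n(v_i)$, and that configuration is exactly what property~(iii) of \Cref{dfn folding} asks for. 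So the assertion that ``$\tilde v$ must itself be a distinguished vertex'' is not established by the reasoning you give, and the argument does not close.
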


\begin{proof}

This is immediate from \Cref{prop valency}.
\end{proof}

The following proposition allows us to simplify the problem of determining whether an even-cardinality subset $S \subset \proj_K^1$ is superelliptic or optimal by considering subsets of $S$ corresponding to connected components of its reduced convex hull.

\begin{prop} \label{prop components}

Let $S \subset \proj_K^1$ be a subset clustered in $\frac{v(p)}{p - 1}$-separated pairs, and write the decomposition of its complex hull into connected components as $\Sigma_{S, 0} = \Sigma_{S, 0}^{(1)} \sqcup \dots \sqcup \Sigma_{S, 0}^{(M)}$.  For $1 \leq m \leq M$, let $I^{(m)} \subset \{0, \dots, g\}$ be the subset consisting of the indices $i$ such that we have $\Sigma_{S, 0}^{(m)} \cap \Lambda_{(i)} \neq \varnothing$ and write $S_m = \{a_i, b_i \ | \ i \in I^{(m)}\} \subset S$.  Then the set $S$ is superelliptic (resp. optimal) if and only if $S_m$ is superelliptic (resp. optimal) for $1 \leq m \leq M$.

\end{prop}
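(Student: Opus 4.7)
The plan is to reduce the proposition to a single key correspondence: foldings of $S$ are in bijection with foldings of the $S_m$ in the sense that every folding of $S$ with respect to an ordered pair $(i,j)$ must have $i,j \in I_m$ for some (necessarily unique) $m$, and such a folding acts trivially on the pairs indexed by $\{0,\dots,g\} \smallsetminus I_m$, coinciding with a folding of $S_m$. Granted this, the equivalence for optimality is immediate from \Cref{dfn optimal}, and the equivalence for goodness will follow by running the folding process of \Cref{thm criterion for goodness} component-by-component.

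First I would unwind \Cref{dfn folding}: a folding of $S$ with respect to $(i,j)$ requires a distinguished vertex $v \in \Sigma_{S,0} \cap \Lambda_j$ lying in the same component of $\Sigma_{S,0}$ as $v_i$, which by definition of the $I_m$ forces $i,j \in I_m$ for a common $m$. The subset $I_{i,j}$ consists of those $\ell$ whose axis $\Lambda_\ell$ meets the path $[v_i,\tilde{v}] \subset \Sigma_{S,0}^{(m)}$, so $I_{i,j} \subseteq I_m$, and consequently the folding fixes every point in a pair indexed by $I_n$ for $n \neq m$. Thus each folding of $S$ is localized to a single component, and the induced operation on $S_m$ is, by inspection, itself a folding of $S_m$ with respect to the same $(i,j)$ and exponent $n$.

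Next I would verify that $\Sigma_{S,0}^{(m)}$ and $\Sigma_{S_m,0}$ agree as metric graphs with matching distinguished vertices (via \Cref{prop reduced convex hull}), so that the local picture near $\tilde{v}$ used to test properties (ii) and (iii) of \Cref{dfn folding} is identical for the folding of $S$ and the folding of $S_m$. The only subtlety is the index $i$ with $\infty \in \{a_i,b_i\}$, where $v_i$ is defined as $\eta_D$ for the minimal disc containing $S \smallsetminus \{\infty\}$, which can differ from the analogous point in $\Sigma_{S_m,0}$. I would dispose of this by first applying a fractional linear transformation moving $\infty$ off of $S$; the notions of reduced convex hull, folding, cluster in $\tfrac{v(p)}{p-1}$-separated pairs, being good, and being optimal are all $\PGL_2(K)$-equivariant (by \Cref{prop pgl2 action on berk} and \Cref{prop folding equivalence properties}), so this reduction is harmless. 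With this identification in hand, a folding of $S$ is good (resp.\ bad, resp.\ good or bad) if and only if the corresponding folding of $S_m$ is.

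From the correspondence, $S$ admits no good or bad folding iff each $S_m$ admits none, proving the equivalence for optimality. For goodness, one direction runs as follows: if each $S_m$ is good, then by \Cref{thm criterion for goodness} each is folding equivalent to an optimal $(S_m)^{\min}$, and concatenating these component-wise chains of foldings (each of which extends to a folding of the ambient set) yields a set folding equivalent to $S$ whose components are all optimal; by the optimality equivalence just established, that set is itself optimal, so \Cref{thm criterion for goodness} forces $S$ to be good. Conversely, if $S$ is good, \Cref{thm criterion for goodness} produces an optimal $\Smin$ folding equivalent to $S$; breaking this sequence of foldings apart by which $I_m$ each one involves yields, for each $m$, a chain of foldings of $S_m$ terminating in a set that is optimal (again using the optimality equivalence), so each $S_m$ is good. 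The main obstacle is the identification of $\Sigma_{S,0}^{(m)}$ with $\Sigma_{S_m,0}$ in the presence of $\infty$; once it is cleared by the automorphism trick, everything else is formal manipulation of the definitions together with \Cref{thm criterion for goodness}.
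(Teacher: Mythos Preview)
Your overall architecture matches the paper's: identify $\Sigma_{S,0}^{(m)}$ with $\Sigma_{S_m,0}$, show that good/bad foldings of $S$ correspond to good/bad foldings of some $S_m$, and then invoke \Cref{thm criterion for goodness}. However, one step is wrong as stated and would break the argument.

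You claim that $I_{i,j} \subseteq I_m$ and hence that the folding fixes every pair indexed outside $I_m$. This is false. By definition, $\ell \in I_{i,j}$ means the path $[\eta_{a_\ell},\tilde v]$ meets the interior of $[v_i,\tilde v]$; this is a statement about which side of $\tilde v$ the Type~I point $\eta_{a_\ell}$ lies on in $\Berk$, not about whether $\Lambda_\ell$ meets the particular component $\Sigma_{S,0}^{(m)}$. An entire component $\Sigma_{S,0}^{(m')}$ with $m'\neq m$ can sit on the $v_i$-side of $\tilde v$, in which case every index in $I_{m'}$ (except possibly $j$) lands in $I_{i,j}$. The correct dichotomy, which the paper uses, is that for each $m'\neq m$ one has either $I_{m'}\smallsetminus\{j\}\subseteq I_{i,j}$ or $(I_{m'}\smallsetminus\{j\})\cap I_{i,j}=\varnothing$; in the first case the folding moves \emph{all} of $S_{m'}$ by the single automorphism $s_j^n\in\PGL_2(K)$, in the second it fixes $S_{m'}$ pointwise. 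Either way $S_{m'}$ is only changed by a global element of $\PGL_2(K)$, so its reduced convex hull, its cluster data, and its optimality status are preserved.

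Once you replace ``fixes $S_{m'}$'' by ``acts on $S_{m'}$ by a single automorphism of $\PGL_2(K)$'', the rest of your argument goes through: the folding of $S$ restricts to a genuine folding of $S_m$ (with the same $(i,j)$ and exponent), properties (ii) and (iii) of \Cref{dfn folding} are detected locally in component $m$, and the chains of good foldings needed for \Cref{thm criterion for goodness} can be run component by component exactly as you describe. Your handling of the $\infty$ issue via a preliminary $\PGL_2(K)$-move is fine and arguably more careful than the paper's one-line identification.
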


\begin{proof}

\Cref{prop reduced convex hull}(g) says that for $1 \leq m \leq M$, the reduced convex hull $\Sigma_{S_m, 0}$ of $S_m$ can be identified with the component $\Sigma_{S, 0}^{(m)} \subset \Sigma_{S, 0}$.  Now it is clear from \Cref{prop valency} that there is a good or bad folding $\phi_{i, j, n}$ of $S$ if and only if there is a good or bad folding $\phi_{i, j, n}$ of $S_m$, where $m$ is the index such that $v_i \in \Sigma_{S, 0}^{(m)} = \Sigma_{S_m, 0}$.  The claim for the property of being optimal follows.

Let $(i, j)$ be an ordered pair of vertices such that the component $\Sigma_{S, 0}^{(m)}$ which contains $v_i$ satisfies $\Sigma_{S, 0}^{(m)} \cap \Lambda_{(j)} \neq \varnothing$, and define the set $I_{i, j}$ as in \Cref{dfn folding}.  Given an index $m' \neq m$, we have $S_{m'} \neq \{j\}$, because otherwise by applying \Cref{prop reduced convex hull}(c)(d) we would get that $\Sigma_{S, 0}^{(m)}$ consists of an isolated distinguished point, and by construction there are no isolated points in $\Sigma_{S, 0}$.  Now it follows from definitions that we have either $S_{m'} \subset I_{i, j} \cup \{j\}$ or $S_{m'} \cap I_{i, j} \subseteq \{j\}$.  Given a folding $\phi_{i, j, n} : S \to S'$, note that we may write the decomposition $\Sigma_{S', 0} = \Sigma_{S', 0}^{(1)} \sqcup \dots \sqcup \Sigma_{S', 0}^{(M)}$ of the reduced convex hull of $S'$ into its connected components, ordered in such a way that the map $\phi_{i, j, n}$ decomposes into maps $\phi_{i, j, n}^{(l)} : \Sigma_{S, 0}^{(l)} \to \Sigma_{S', 0}^{(l)}$ for $1 \leq l \leq M$ which can be described as follows:
\begin{itemize}

\item the map $\phi_{i, j, n}^{(m)}$ is simply the folding $\phi_{i, j, n}$ restricted to $\Sigma_{S, 0}^{(m)}$;

\item the map $\phi_{i, j, n}^{(m')} : \Sigma_{S, 0}^{(m')} \to \Sigma_{S', 0}^{(m')} = s_j^n(\Sigma_{S, 0}^{(m')})$ is the action of the automorphism $s_j^n \in \PGL_2(K)$ if $S_{m'} \subset I_{i, j} \cup \{j\}$; and 

\item the map $\phi_{i, j, n}^{(m')} : \Sigma_{S, 0}^{(m')} \to \Sigma_{S', 0}^{(m')} = \Sigma_{S, 0}^{(m')}$ is the identity if $S_{m'} \cap I_{i, j} \subseteq \{j\}$.

\end{itemize}

It follows that if $S'$ is a good folding of $S$, then the folding operation acts as the ``same'' good folding of $S_m$ while leaving each $S_{m'}$ for $m' \neq m$ unchanged up to an automorphism in $\PGL_2(K)$.  Thus, any sequence of good foldings $S =: S_0 \to S_1 \to \dots \to S_N$ (as in \Cref{lemma algorithm terminates}) is equivalent (up to automorphisms in $\PGL_2(K)$) to a sequence $S_m =: (S_m)_0 \to (S_m)_1 \to \dots \to (S_m)_N$ of good foldings or images under automorphisms in $\PGL_2(K)$ of each $S_m$.  Since $S$ is superelliptic if and only if it is folding equivalent to an optimal set $\Smin$ by \Cref{thm criterion for being superelliptic}, it now follows that $S$ is superelliptic if and only if each $S_m$ is superelliptic.
\end{proof}

We use these results to investigate the possible outcomes of the problem when $g = 1$ and $g = 2$.

\begin{prop} \label{prop g = 1 converse}

Suppose that $S \subset \proj_K^1$ is a subset of cardinality $4$.  Then the properties of being optimal, superelliptic, and clustered in $\frac{v(p)}{p - 1}$-separated pairs are all equivalent for $S$.

\end{prop}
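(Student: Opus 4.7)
The plan is to argue the two directions of the equivalence separately, with the reverse direction being the substantive one.

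For the forward direction, if $S$ is good (which is implied by being both optimal and good), then \Cref{rmk clustered in pairs}(a) directly tells us that $S$ is clustered in $\frac{v(p)}{p-1}$-separated pairs. This is immediate.

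For the reverse direction, the idea is to show that when $\#S = 4$ (so $g = 1$), the reduced convex hull $\Sigma_{S, 0}$ has a very simple structure which forces $S$ to be optimal. First I would apply \Cref{prop reduced convex hull}: since the only odd cardinalities in $\{1, 2, 3, 4\}$ are $1$ and $2g + 1 = 3$, the quantity $o$ from that proposition equals $0$. Hence $\Sigma_{S, 0}$ consists of $o + 1 = 1$ connected component and contains exactly $g + o + 1 = 2$ distinguished vertices, namely $v_0$ and $v_1$, with each axis $\Lambda_i$ meeting $\Sigma_{S, 0}$ only at $v_i$. The key structural observation is then that $\Sigma_{S, 0}$ must be homeomorphic to a closed interval with $v_0$ and $v_1$ as its endpoints. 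To see this, I would invoke \Cref{prop reduced convex hull}(b): any vertex of $\Sigma_{S, 0}$ of valency $\leq 2$ is distinguished. Were $\Sigma_{S, 0}$ to contain a natural vertex of valency $\geq 3$, then this finite connected real tree would necessarily have at least three leaves, each of valency $1$ and therefore distinguished, contradicting the count of only two distinguished vertices. Thus $\Sigma_{S, 0}$ is a single path whose only vertices are $v_0$ and $v_1$, both of valency $1$.

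To finish, I would invoke \Cref{cor valency}: since every distinguished vertex of $\Sigma_{S, 0}$ is a tail, $S$ is optimal, and then \Cref{lemma optimal is good} gives that $S$ is also good. I do not expect any real obstacles; the only delicate point is the elementary tree-combinatorial argument ruling out branch points, which is a straightforward leaf count using \Cref{prop reduced convex hull}(b)--(d).
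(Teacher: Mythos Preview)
Your proposal is correct and follows essentially the same route as the paper's proof: verify via \Cref{prop reduced convex hull} that $\Sigma_{S,0}$ is connected with exactly two distinguished vertices, each of valency $1$, and then invoke \Cref{cor valency}. The paper's argument is terser but identical in substance; your leaf-counting step simply makes explicit why the reduced convex hull is a single edge, and the paper additionally notes that the second distinguished vertex may correspond to a cardinality-$3$ cluster rather than literally being $v_1$, a detail that does not affect your reasoning.
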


\begin{proof}

Optimal implies superelliptic by \Cref{lemma optimal is superelliptic}, and superelliptic implies clustered in $\frac{v(p)}{p - 1}$-separated pairs by \Cref{prop clustered in pairs}.  Now assume that the set $S$ is clustered in $\frac{v(p)}{p - 1}$-separated pairs.  The graph $\Sigma_{S, 0}$ has exactly $2$ distinguished vertices by \Cref{prop reduced convex hull}(d) and is connected by \Cref{prop reduced convex hull}(e).  These $2$ distinguished vertices are thus connected by a single edge, and therefore each has valency $1$.  Now by \Cref{cor valency}, the set $S$ is optimal.
\end{proof}

\begin{rmk} \label{rmk g = 2}

When $S \subset \proj_K^1$ is a subset of cardinality $6$ which is clustered in $\frac{v(p)}{p - 1}$-separated pairs, there are three possible graph isomorphism types of $\Sigma_{S, 0}$, as noted in \cite[\S IX.2.5.3]{gerritzen2006schottky}.

In order to satisfy the necesssary condition of being clustered in pairs, there must be at least $2$ even-cardinality clusters of $S$, and it is clear for purely combinatorial reasons that there cannot be more than $3$ even-cardinality clusters of $S$.  In the case that there are $3$ even-cardinality clusters of $S$, the reduced convex hull $\Sigma_{S, 0}$ has exactly $3$ distinguished vertices, one corresponding to each cluster, each of valency $1$; by \Cref{cor valency}, the set $S$ is guaranteed to be superelliptic (when $p = 2$, this is in fact the genus-$2$ case of Kadziela's criterion; see \Cref{rmk kadziela} below).

In the case that there are only $2$ even-cardinality clusters and at least one cardinality-$3$ cluster, by \Cref{prop reduced convex hull}(d)(e), the reduced convex hull $\Sigma_{S, 0}$ has $4$ distinguished vertices and $2$ connected components, each of which has $2$ of the distinguished vertices with a single edge connecting them.  Since each distinguished vertex has valency $1$, again by \Cref{cor valency} (or by combining Propositions \ref{prop components} and \ref{prop g = 1 converse}) the set $S$ is guaranteed to be superelliptic.  (This is the third of the cases described in \cite[\S IX.2.5.3]{gerritzen2006schottky}, in which it is written that ``one finds examples where the group $\Gamma$ does not have the right properties''.  No example is given, and it is unclear why Gerritzen and van der Put got this result when our reasoning shows that $S$ is always superelliptic, \textit{i.e.} always ``has the right properties''.)

In the case that there are only $2$ even-cardinality clusters and no cardinality-$3$ cluster of $S$, the reduced convex hull $\Sigma_{S, 0}$ has $3$ distinguished vertices, but in this case one of the vertices has valency $2$.  This is the only case in which it is possible for the set $S$ to not be superelliptic.  One may check whether $S$ is optimal using \Cref{prop good or bad folding computation} and replace $S$ with the set obtained by the appropriate folding if it is not.  \Cref{ex clustered in pairs not sufficient} falls under this case: see \Cref{ex finding optimal1} below for more details.

\end{rmk}

Using \Cref{cor valency}, we are moreover able to recover a result in the dissertation of Kadziela and a closely related (but much less explicitly framed) result of Gerritzen and van der Put and generalize it to any prime $p$ and over any residue characteristic of $K$.

\begin{rmk} \label{rmk kadziela}

For the case that $p = 2$ and the residuce characteristic of $K$ is not $2$, Gerritzen and van der Put state in \cite[\S IX.2.5.2]{gerritzen2006schottky} (and later prove in \cite[\S IX.2.10]{gerritzen2006schottky}) that a particular geometric criterion is sufficient to guarantee that a set $S$ is $2$-superelliptic (\textit{i.e.} that ``the points are in good position''); one can readily show that this criterion is equivalent to the condition that each distinguished vertex of the reduced convex hull has valency $1$ and is thus given by \Cref{cor valency} for any prime $p$ and any residue characteristic.

Meanwhile, Kadziela in \cite[\S5.2]{kadziela2007rigid} describes a criterion which is much more explicitly stated in terms of the elements of $S$ but which is more or less equivalent to that of Gerritzen and van der Put.  Given a set $S \subset \proj_K^1$ which (after possibly applying an appropriate automorphism in $\PGL_2(K)$) satisfies $a_0 := 0, a_g := 1, b_g := \infty \in S$, Kadziela's criterion is that 
\begin{enumerate}[(i)]
\item we have $v(b_0) > v(a_1) \geq v(b_1) \geq \dots \geq v(b_{g - 1}) > 0$; and 
\item in the notation of Kadziela, we have $d_{i, j} < 1$ for distinct indices $i, j \in \{0, \dots, g - 1\}$, which means that the discs $D_i$ are mutually disjoint for $0 \leq i \leq g - 1$; this in turn is equivalent to $S$ being clustered in pairs and its even-cardinality clusters being given by the subsets $\{a_0 = 0, b_0\}, \dots \{a_{g - 1}, b_{g - 1}\}, \{a_0 = 0, b_0, \dots, a_{g - 1}, b_{g - 1}\} \subset S$.
\end{enumerate}
The main result in this section of Kadziela's disseration is \cite[Theorem 5.7]{kadziela2007rigid}, which states (again when $p = 2$ and the residue characteristic of $K$ is not $2$) that Kadziela's criterion is sufficient for a set $S$ to be $2$-superelliptic; Kadziela's criterion is then used as a hypothesis for a number of other useful computations later in his dissertation, including one which proves the conjecture on \cite[p. 282]{gerritzen2006schottky} for this case.  It is easy to see that Kadziela's criterion is again equivalent to the condition that each distinguished vertex of the reduced convex hull has valency $1$; here the distinguished vertices are simply $v_0, \dots, v_g \in \Sigma_{S, 0}$.

\end{rmk}

\section{An algorithm to find an optimal set} \label{sec 4}

The results of the last section point towards a straightforward algorithm that takes an even-cardinality subset $S \subset \proj_K^1$ as its input, determines whether or not $S$ is superelliptic, and in the case that $S$ is determined to be superelliptic, outputs an optimal set $\Smin$ which is folding equivalent to $S$.  Such an algorithm is described below as \Cref{algo folding}.  The main mechanical processes involved in the steps of this algorithm involve determining whether a multiset is clustered in $\frac{v(p)}{p - 1}$-separated pairs (for which one uses the cluster-theoretic definition given in \Cref{rmk clustered in pairs alternate definition}) and determining inclusions between discs in $\cc_K$ and differences in radii of the discs.  We note that the computations just mentioned can be performed as subtasks relying directly only on knowledge of the distances between elements of $K$.

\subsection{The algorithm and how it works} \label{sec 4 algorithm}

Below, we retain the notation introduced in \Cref{dfn D_i} and adopt the following additional notation.

\begin{dfn} \label{dfn tilde D_j^(i)}

Let $S \subset \proj_K^1$ be a subset which is clustered in $\frac{v(p)}{p - 1}$-separated pairs with all of its associated objects, and assume that $b_g = \infty$.  For any distinct indices $i, j$ such that the minimal odd-cardinality cluster containing $a_i, b_i$ coincides with the minimal odd-cardinality cluster containing $a_j, b_j$, set $D_j^{(i)} = D_j$.  For any distinct indices $i, j$ such that there is an odd-cardinality cluster containing $\{a_i, b_i\}$ and exactly $1$ point in $\{a_j, b_j\}$, write $D_j^{(i)} \subset \cc_K$ for the minimal disc containing $\{a_i, b_i\}$ and exactly $1$ point in $\{a_j, b_j\}$.

Let $D^{(i, j)} \subset \cc_K$ be the minimal disc containing both $D_i$ and $D_j^{(i)}$.

If we have $d(D_j^{(i)}) - d(D^{(i, j)}) > \frac{v(p)}{p - 1}$, write $\tilde{D}_j^{(i)}$ for the (unique) disc containing $D_j^{(i)}$ whose (logarithmic) radius is exactly $\frac{v(p)}{p - 1}$ less than $d(D_j^{(i)})$.  If we have $d(D^{(i, j)}) - d(D_j^{(i)}) \leq \frac{v(p)}{p - 1}$, write $\tilde{D}_j^{(i)}$ for the (unique) disc containing $D_i$ whose (logarithmic) radius is exactly $2d(D^{(i, j)}) - d(D_j^{(i)}) - \frac{v(p)}{p - 1}$.  (Note that under this definition, if the residue characteristic of $K$ is not $p$, then we simply have $\tilde{D}_j^{(i)} = D_j^{(i)}$.)

\end{dfn}

The point of the above definition is that, for the pairs of indices $i, j$ the definition applies to, it computes the closest point in $\hat{\Lambda}_{(j)}$ to $v_i$: see \Cref{lemma tilde D_j^(i)} below.

We are now ready to present our algorithm.

\begin{algo} \label{algo folding}

Given the input of an even-cardinality subset $S \subset \proj_K^1$ with $\infty \in S$, perform the following operations (during which we modify $S$ and allow it to become a multiset).

\begin{enumerate}

\item \label{step 1} If precisely a positive even number of elements of $S$ are repeated, then replace $S$ with its underlying set; stop the algorithm; and return $S$ and ``The input set is redundant.''

\item \label{step 2} Check whether $S$ is clustered in $\frac{v(p)}{p - 1}$-separated pairs.  If it is not, stop the algorithm and return ``The input set is not superelliptic.''  Otherwise, label the elements in the pairs as $\{a_i, b_i\} \subset S$ for $0 \leq i \leq g := \frac{1}{2}\#S - 1$, in such a way that $b_g = \infty$.  Set $i = 0$.

\item \label{step 3} Let $j$ be the (necessarily existing and unique) index such that $\tilde{D}_j^{(i)}$ is defined, that we have $\tilde{D}_j^{(i)} \supsetneq D_i$, and that any other $j'$ with $\tilde{D}_{j'}^{(i)} \supsetneq D_i$ satisfies $\tilde{D}_{j'}^{(i)} \supsetneq \tilde{D}_j^{(i)}$.  If we have 
\begin{equation} \label{eq testing for good or bad folding}
v\Big(\frac{a_l - a_j}{a_l - b_j} - \zeta_p^n \frac{a_i - a_j}{a_i - b_j}\Big) > v\Big(\frac{a_l - a_j}{a_l - b_j}\Big) + \frac{v(p)}{p - 1}
\end{equation}
 for some index $l \neq i, g$ and some $n \in \{1, \dots, p - 1\}$ (where the denominators of the fractions in the above inequality are replaced by $1$ if $b_j = \infty$), then proceed to Step \ref{step 4}.  Otherwise, go to Step \ref{step 5}.

\item \label{step 4} Let $I_{i, j} \subset \{0, \dots, g\}$ be the subset of indices $l$ such that $D_l \subsetneq \tilde{D}_j^{(i)}$.  For each $l \in I_{i, j}$, if $j \neq g$, set 
\begin{equation}
a_l' = \frac{(\zeta_p^n a_j - b_j)a_l + (1 - \zeta_p^n)a_j b_j}{(\zeta_p^n - 1)a_l - (a_j - \zeta_p^n b_j)}, \ \ \ b_l' = \frac{(\zeta_p^n a_j - b_j)b_l + (1 - \zeta_p^n)a_j - b_j}{(\zeta_p^n - 1)b_l - (a_j - \zeta_p^n b_j)},
\end{equation}
 and if $j = g$, set $a_l' = (1 - \zeta_p^n)a_g + \zeta_p^n a_l$ and $b_l' = (1 - \zeta_p^n)a_g + \zeta_p^n b_l$.  Replace $S$ with 
\begin{equation*}
(S \smallsetminus \{a_l, b_l\}_{l \in I_{i, j}}) \cup \{a_l', b_l'\}_{l \in I_{i, j}}
\end{equation*}
 and return to Step \ref{step 1}.

\item \label{step 5} If $i \neq g - 1$, replace $i$ with $i + 1$ and return to Step \ref{step 3}.  Otherwise, set $\Smin = S$; stop the algorithm; and return $\Smin$ and ``The input set is superelliptic.''

\end{enumerate}

\end{algo}

A rough idea of what the above algorithm does to an input set $S$ is that it keeps performing good or bad foldings on $S$ in order to bring $S$ closer to being an optimal set, returning to the beginning after performing each (good or bad) folding and detecting immediately in Steps \ref{step 1} and \ref{step 2} if the folding was bad; if we reach a point where there are no more good or bad foldings, the original input set $S$ will have been replaced by an optimal set $\Smin$ (as is guaranteed by \Cref{lemma algorithm terminates}), and only in this situation do we reach Step \ref{step 5}.  More precisely, the effectiveness of \Cref{algo folding} at discerning whether an input is superelliptic, and at producing an optimal set which is folding equivalent if so, is given by the below theorem.

\begin{thm} \label{thm algorithm}

\Cref{algo folding} terminates after executing a finite number of steps.  The output may be interpreted as follows.

\begin{enumerate}[(a)]

\item If \Cref{algo folding} terminates at Step \ref{step 5} by returning a subset $\Smin \subset \proj_K^1$ and ``The input set is superelliptic.'', then the original input set $S$ is superelliptic, and the set $\Smin$ is optimal and folding equivalent to $S$.

\item If \Cref{algo folding} terminates at Step \ref{step 1} by returning a subset $S' \subset \proj_K^1$ and ``The input set is redundant.'', then the original input set $S$ is not superelliptic as it violates property (ii) (but not necessarily property (i)) in \Cref{dfn superelliptic set}, and the lower-cardinality set $S'$ has the same associated subgroups $\Gamma_0 \lhd \Gamma \lhd \PGL_2(K)$ as $S$ does.

\item If \Cref{algo folding} terminates at Step \ref{step 2} by returning ``The input set is not superelliptic.'', then the original input set $S$ violates property (i) in \Cref{dfn superelliptic set} and is thus not superelliptic.

\end{enumerate}

\end{thm}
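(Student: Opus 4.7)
The strategy is to prove the theorem by separately verifying each termination outcome of \Cref{algo folding} and combining this with a termination argument. The central technical point is to interpret inequality (5) in Step \ref{step 3} geometrically: for the unique index $j$ chosen there (corresponding to the point $\tilde{v}$ on $\hat{\Lambda}_j$ that is nearest to $v_i$ along the reduced convex hull $\Sigma_{S, 0}$), inequality (5) holds for some $l \neq i, g$ and some $n \in \{1, \dots, p - 1\}$ if and only if there is a good or bad folding of $S$ with respect to $(i, j)$ with exponent $n$. The forward direction amounts to unraveling the inequality via the cross-ratio substitution $z \mapsto (z - a_j)/(z - b_j)$, which conjugates $s_j$ into the multiplication-by-$\zeta_p$ map, and recognizing it as the assertion that the path $[v_i, s_j^n(v_i)]$ agrees locally with an edge of $\Sigma_{S, 0}$ emanating from $\tilde{v}$ transversally to $\hat{\Lambda}_j$; this yields the segment required for property (iii) of \Cref{dfn folding}, and then \Cref{cor bad foldings satisfy (iii)} supplies the desired good or bad folding. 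The reverse implication unwinds the same correspondence.

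For termination, each iteration either halts the algorithm or performs a single folding (in Step \ref{step 4}) before returning to Step \ref{step 1}. A good folding strictly decreases some entry of the finite multiset of distances between distinguished vertices of $\Sigma_{S, 0}$ while leaving the rest non-increasing, by \Cref{prop lengths decrease}(a)(c); since these distances lie in the discrete subgroup $v(K^\times) \subset \qq$ and are bounded below by $2\frac{v(p)}{p-1}$ by \Cref{prop reduced convex hull}(e), only finitely many consecutive good foldings can occur (the argument of \Cref{lemma algorithm terminates}). A bad folding, by definition, produces a multiset that is not clustered in $\frac{v(p)}{p-1}$-separated pairs, which must be detected on the next iteration either by Step \ref{step 1} (if the failure manifests as an even number of coincidences among points) or by Step \ref{step 2} (otherwise, noting that even an isolated coincidence of an image $s_j^n(a_i)$ with an existing point of $S$ forces two resulting axes to share an endpoint and hence to violate $\frac{v(p)}{p-1}$-separatedness). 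Thus the algorithm halts in finitely many steps.

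The three output interpretations are then handled as follows. For case (a), termination at Step \ref{step 5} means no good or bad folding of the current set $S$ with respect to any $(i, j)$ with $i < g$ exists; by \Cref{rmk folding change of i}, any would-be folding pivot at $i = g$ can be replaced by some $i' \in I_{g, j} \cap \{0, \dots, g - 1\}$ with reversed exponent, so $S$ admits no good or bad folding at all and is thus optimal by \Cref{dfn optimal}. Then \Cref{lemma optimal is good} gives that $S$ is good, and \Cref{prop folding equivalence properties} transfers this to the original input, which is folding equivalent to the output $\Smin$. For case (b), termination at Step \ref{step 1} means a folding produced a multiset with an even number of coincidences; pairing these coincidences forces the corresponding order-$p$ generators to lie in a common cyclic subgroup by \Cref{prop order-p elliptic}(a), so $\Gamma_0$ is already generated by fewer than $g + 1$ elements, violating property (ii) of \Cref{dfn good} (while property (i) need not fail). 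By \Cref{prop folding equivalence properties} this conclusion carries back to the input, and the same argument shows that the output set $S'$ (with duplicates removed) determines the same associated subgroups $\Gamma \lhd \Gamma_0$. For case (c), termination at Step \ref{step 2} produces a multiset failing $\frac{v(p)}{p-1}$-separated clustering; by \Cref{lemma bad foldings are bad} the set on which this bad folding was performed is not good, and \Cref{prop folding equivalence properties} transfers this back to the input, which therefore violates property (i) of \Cref{dfn good}.

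The main obstacle will be the rigorous verification of the equivalence between inequality (5) and the existence of a good or bad folding with respect to $(i, j)$; this requires careful computation of $s_j^n(v_i) \in \Berk$ via the cross-ratio substitution and a comparison with the local edge structure of $\Sigma_{S, 0}$ around $\tilde{v}$, drawing on the explicit geometric content of \Cref{dfn tilde D_j^(i)}.
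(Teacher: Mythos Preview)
Your proposal is correct and follows essentially the same route as the paper's proof: both interpret Step~\ref{step 4} as performing a good or bad folding, use \Cref{lemma algorithm terminates} for termination, invoke \Cref{rmk folding change of i} to conclude optimality when Step~\ref{step 5} is reached with $i = g-1$, and appeal to \Cref{lemma optimal is good}, \Cref{lemma bad foldings are bad}, and \Cref{prop folding equivalence properties} for the three output cases. Two small remarks: the ``main obstacle'' you identify --- the equivalence between inequality~(\ref{eq testing for good or bad folding}) and the existence of a good or bad folding with respect to $(i,j)$ --- is precisely \Cref{prop good or bad folding computation}, which the paper proves separately and then simply cites (together with \Cref{lemma tilde D_j^(i)} for the meaning of the chosen $j$); and your parenthetical ``with reversed exponent'' when reducing the case $i = g$ is not needed, since \Cref{rmk folding change of i} gives the replacement $(i',j)$ with the \emph{same} exponent.
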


In order to demonstrate how \Cref{algo folding} works and prove the above theorem, we need some results which are useful in their own rights as they provide elementary methods of computing properties of reduced convex hulls and foldings.

\begin{lemma} \label{lemma tilde D_j^(i)}

With the above set-up, given distinct indices $i, j$ such that $\tilde{D}_j^{(i)}$ can be defined as in \Cref{dfn tilde D_j^(i)}, the point $\tilde{v}_j^{(i)} := \eta_{\tilde{D}_j^{(i)}} \in \Sigma_{S, 0}$ is the closest point in $\hat{\Lambda}_{(j)}$ to $v_i$.

\end{lemma}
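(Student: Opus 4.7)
The plan is to identify both $\eta_{\tilde{D}_j^{(i)}}$ and the closest point on $\hat{\Lambda}_j$ to $v_i$ with the unique point on the geodesic from $v_i$ to $\Lambda_j$ in $\Berk$ lying at $\delta$-distance exactly $\frac{v(p)}{p-1}$ from $\Lambda_j$.  Since $\Berk$ is uniquely path-connected and is a real tree (Remark \ref{rmk paths in berk}) and the clustered-in-$\frac{v(p)}{p-1}$-separated-pairs hypothesis gives $\hat{\Lambda}_i \cap \hat{\Lambda}_j = \varnothing$ (so in particular $v_i \notin \hat{\Lambda}_j$), the closest point on the closed tubular neighborhood $\hat{\Lambda}_j = B(\Lambda_j, \frac{v(p)}{p-1})$ to $v_i$ must lie along the geodesic from $v_i$ to its (unique) closest point on $\Lambda_j$, precisely where that geodesic first enters $\hat{\Lambda}_j$---that is, at distance $\frac{v(p)}{p-1}$ from the meeting point with $\Lambda_j$.

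The first task is to locate the meeting point of the geodesic with $\Lambda_j$, by case analysis on the definition of $D_j^{(i)}$.  When $D_j^{(i)}$ is the minimal disc containing $\{a_i, b_i\}$ and (say) $a_j$ but not $b_j$, we have $D_i \subsetneq D_j^{(i)} \subsetneq D_j$, so $D^{(i,j)} = D_j^{(i)}$, and the geodesic ascends straight through the chain of discs between $D_i$ and $D_j^{(i)}$ to meet $\Lambda_j$ for the first time at $\eta_{D_j^{(i)}}$, a point on the sub-arc of $\Lambda_j$ from $\eta_{a_j}$ to the apex $\eta_{D_j}$.  When instead $D_j^{(i)} = D_j$, the clustered-in-pairs hypothesis---via the characterization in Remark \ref{rmk clustered in pairs alternate definition}---forces every cluster strictly between $\{a_i, b_i\}$ and their shared minimal odd-cardinality containing cluster to have even cardinality, and hence to contain both of $\{a_j, b_j\}$ as soon as it contains either.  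This rules out any intermediate disc containing $D_i$ and exactly one of $\{a_j, b_j\}$, so the geodesic ascends from $v_i$ through $\eta_{D^{(i,j)}}$ (which sits above the apex of $\Lambda_j$) and descends to meet $\Lambda_j$ for the first time at $\eta_{D_j} = \eta_{D_j^{(i)}}$.

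It then remains to translate ``back up $\frac{v(p)}{p-1}$ along the geodesic from $\eta_{D_j^{(i)}}$ toward $v_i$'' into the explicit disc-theoretic description of $\tilde{D}_j^{(i)}$.  The descending sub-segment $[\eta_{D^{(i,j)}}, \eta_{D_j^{(i)}}]$ of the geodesic has $\delta$-length $d(D_j^{(i)}) - d(D^{(i,j)})$; comparing this with $\frac{v(p)}{p-1}$ produces the two branches of Definition \ref{dfn tilde D_j^(i)}.  In the first branch the target point remains on this descending sub-segment and corresponds to a disc containing $D_j^{(i)}$ with logarithmic radius $d(D_j^{(i)}) - \frac{v(p)}{p-1}$; in the second branch the target point crosses past $\eta_{D^{(i,j)}}$ onto the ascending sub-segment out of $v_i$ and corresponds to a disc containing $D_i$ whose logarithmic radius is obtained by a direct additive manipulation of $d(D^{(i,j)})$, $d(D_j^{(i)})$, and $\frac{v(p)}{p-1}$ along the two legs of the geodesic.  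The main obstacle, really a matter of bookkeeping, is the cluster-theoretic exclusion in the case $D_j^{(i)} = D_j$; once that is carefully established via Remark \ref{rmk clustered in pairs alternate definition}, the remaining arithmetic of $\delta$-distances in the tree $\Berk$ is routine.
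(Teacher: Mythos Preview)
Your proposal is correct and follows essentially the same route as the paper. The paper's own proof is deliberately terse---it identifies $v := \eta_{D_j^{(i)}}$ as the unique distinguished vertex on $\Lambda_j$ in the same component of $\Sigma_{S,0}$ as $v_i$ via \Cref{prop reduced convex hull}(c), notes that $[v_i, v]$ passes through the join $v_i \wedge v = \eta_{D^{(i,j)}}$, and then dismisses the rest as ``tedious but straightforward.'' What you have written is precisely that tedious but straightforward argument: the case analysis locating the foot of the geodesic on $\Lambda_j$, and the arithmetic of backing up $\frac{v(p)}{p-1}$ along the two legs of $[v_i, \eta_{D^{(i,j)}}] \cup [\eta_{D^{(i,j)}}, \eta_{D_j^{(i)}}]$. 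The only stylistic difference is that where the paper invokes \Cref{prop reduced convex hull}(c) to pin down $\eta_{D_j^{(i)}}$, you instead argue directly from the cluster-theoretic description in \Cref{rmk clustered in pairs alternate definition}; these amount to the same thing, since that proposition is itself proved from the cluster picture.
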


\begin{proof}

This follows from a tedious but straightforward argument directly using definitions, \Cref{rmk paths in berk}, and the observations that $v := \eta_{D_j^{(i)}}$ is the (unique thanks to \Cref{prop reduced convex hull}(d)) point in $\Lambda_{(j)}$ lying in the same component of $\Sigma_{S, 0}$ as the point $v_i = \eta_{D_i}$, and that the path $[v_i, v] \subset \Sigma_{S, 0}$ passes through $\eta_{D^{(i, j)}} = v_i \vee v$.
\end{proof}

\begin{lemma} \label{lemma valuations}

For any $r \in \rr$, write $D(r) \subset \cc_K$ for the disc $\{z \in \cc_K \ | \ v(z) \geq r\}$, and for each pair of distinct points $a, b \in \proj_{\cc_K}^1$, let $\Lambda_{a, b} := [\eta_a, \eta_b] \subset \Berk$ denote the axis connecting their corresponding points of Type I, and define the point $\eta_a \vee \eta_b \in \Berk$ as in \Cref{rmk paths in berk}.

\begin{enumerate}[(a)]

\item Given any point $a \in \proj_{\cc_K}^1 \smallsetminus \{0, \infty\}$, the closest point in the axis $\Lambda_{0, \infty}$ to $\eta_a$ is $\eta_{D(v(a))}$.

\item Given any distinct points $\eta, \eta' \in \Berk$, the point $\eta \vee \eta' \in [\eta, \eta']$ has minimal distance to the axis $\Lambda_{0, \infty}$ among points in the path $[\eta, \eta']$.

\item Given any distinct points $a, b \in \proj_{\cc_K}^1 \smallsetminus \{0, \infty\}$, writing $r = \min\{v(a), v(b)\}$, we have 
\begin{equation} \label{eq valuations}
v(a - b) = r + \delta(\eta_a \vee \eta_b, D(r)) = r + \delta(\Lambda_{a, b}, \Lambda_{0, \infty}).
\end{equation}

\end{enumerate}

\end{lemma}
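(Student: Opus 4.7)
The plan is to prove the three parts in sequence, with everything resting on the explicit description of paths in $\Berk$ given in \Cref{rmk paths in berk} together with unique path-connectedness. I expect no significant obstacle; the statement is essentially a geometric unpacking of path structure in $\Berk$ combined with one elementary ultrametric computation.

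For part (a), I would observe that the unique path $[\eta_a, \eta_\infty]$ traverses the nested family of discs containing $a$ parametrized by increasing (logarithmic, decreasing) radius. By the ultrametric, the disc $\{z : v(z-a) \geq v(a)\}$ equals $D(v(a))$, so $D(v(a))$ is the smallest disc on this path that also contains $0$, which is to say the first point at which $[\eta_a, \eta_\infty]$ meets $\Lambda_{0,\infty} = [\eta_0, \eta_\infty]$. Since $\Berk$ is uniquely path-connected, this intersection point $\eta_{D(v(a))}$ is the unique closest point on $\Lambda_{0,\infty}$ to $\eta_a$.

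For part (b), I would again use \Cref{rmk paths in berk}: the path $[\eta, \eta']$ is the concatenation of $[\eta, \eta \wedge \eta']$ (through discs of increasing radius containing $\eta$) with the corresponding downward path $[\eta \wedge \eta', \eta']$. Thus $\eta \wedge \eta'$ is the maximum of $[\eta, \eta']$ under the partial order rooted at $\eta_\infty$, and for any $\xi \in [\eta, \eta']$ the path from $\xi$ to $\eta_\infty$ passes through $\eta \wedge \eta'$. Since the closest point on $\Lambda_{0,\infty}$ to $\xi$ is the first point of $\Lambda_{0,\infty}$ encountered on the path from $\xi$ to $\eta_\infty$, unique path-connectedness forces
\begin{equation*}
\delta(\xi, \Lambda_{0,\infty}) = \delta(\xi, \eta \wedge \eta') + \delta(\eta \wedge \eta', \Lambda_{0,\infty}),
\end{equation*}
which is minimized precisely at $\xi = \eta \wedge \eta'$ (and this point already lies on $\Lambda_{0,\infty}$ in the degenerate case that the distance is zero).

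For part (c), I would combine the two previous parts with a short computation. Applying part (b) to $\Lambda_{a,b} = [\eta_a, \eta_b]$ shows $\eta_a \wedge \eta_b$ realizes $\delta(\Lambda_{a,b}, \Lambda_{0,\infty})$. The point $\eta_a \wedge \eta_b$ corresponds to the smallest disc $D$ containing both $a$ and $b$, which by the ultrametric is $\{z : v(z - a) \geq v(a-b)\}$ of logarithmic radius $v(a - b)$. Since $v(a) = v(b) = r$ forces $v(a-b) \geq r$, the disc $D$ is contained in $D(r)$, which is the smallest disc on $\Lambda_{0,\infty}$ containing $D$; hence the closest point on $\Lambda_{0,\infty}$ to $\eta_a \wedge \eta_b$ is $\eta_{D(r)}$ (consistent with part (a) applied to either $a$ or $b$), and the distance between them is $v(a-b) - r$. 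Rearranging yields both equalities in (\ref{eq valuations}), handling uniformly both the generic case $v(a-b) > r$ and the degenerate case $v(a-b) = r$ where $\eta_a \wedge \eta_b = \eta_{D(r)}$ and the distance is zero.
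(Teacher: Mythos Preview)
Your proposal is correct and follows essentially the same approach as the paper. The paper dispatches parts (a) and (b) in one sentence as ``elementary observations that follow from \Cref{rmk paths in berk}'', while you spell out the details; for part (c) both arguments identify $\eta_a \wedge \eta_b$ with the disc of radius $v(a-b)$ contained in $D(r)$ and read off the distance $v(a-b) - r$, the only cosmetic difference being that the paper justifies the second equality in (\ref{eq valuations}) via part (a) applied to each of $a,b$ whereas you invoke part (b).
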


\begin{proof}

Parts (a) and (b) are both elementary observations that follow from \Cref{rmk paths in berk} (see also \cite[Example 6.30]{benedetto2019dynamics}).  Meanwhile, in the situation of part (c), if we have $v(a) \neq v(b)$, then we clearly have $\eta_a \vee \eta_b = \eta_{D(r)} \in \Lambda_{0, \infty}$, and the equations in (\ref{eq valuations}) follow immediately, so let us assume that we instead have $v(a) = v(b) = r$.  The closest point in $\Lambda_{0, \infty}$ to both $\eta_a$ and $\eta_b$ is $\eta_{D(r)}$, so (by uniqueness of paths as in \Cref{rmk paths in berk}) the closest point in $\Lambda_{0, \infty}$ to $\Lambda_{a, b}$ is $\eta_{D(r)}$.  This proves the second equality in (\ref{eq valuations}).

Now let $D_{a, b} \subset \cc_K$ be the minimal disc containing both $a$ and $b$.  We observe that $\eta_{D_{a, b}} = \eta_a \vee \eta_b$; that its (logarithmic) radius is $d(D_{a, b}) = v(a - b)$; and that we have the containment $D_{a, b} \subseteq D(r)$.  Now by our definition of the metric on $\Berk$, we have $\delta(\eta_{a} \vee \eta_{b}, \eta_{D(r)}) = d(D_{a, b}) - d(D(r)) = v(a - b) - r$, and part (c) is proved.
\end{proof}

\begin{prop} \label{prop good or bad folding computation}

Let $S \subset \proj_K^1$ be a subset which is clustered in $\frac{v(p)}{p - 1}$-separated pairs with all of its associated objects.  Let $j$ be an index and $v \in \hat{\Lambda}_{(j)} \cap \Sigma_{S, 0}$.

There exists a good or bad folding $\phi_{i, j, n}$ of $S$ if and only if, for some index $l \notin I_{i, j}$ (with $I_{i, j} \subset \{0, \dots, g\}$ defined as in \Cref{dfn folding}) and some (any) $c_i \in \{a_i, b_i\} \smallsetminus \{\infty\}$ and $c_l \in \{a_l, b_l\} \smallsetminus \{\infty\}$, we have 
\begin{equation} \label{eq computing good or bad folding}
v\Big(\frac{c_l - a_j}{c_l - b_j} - \zeta_p^n \frac{c_i - a_j}{c_i - b_j}\Big) > v\Big(\frac{c_l - a_j}{c_l - b_j}\Big) + \frac{v(p)}{p - 1},
\end{equation}
 where the denominators of the fractions in the above inequality are replaced by $1$ if $b_j = \infty$.

\end{prop}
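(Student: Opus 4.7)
The plan is to invoke \Cref{cor bad foldings satisfy (iii)}, which reduces the existence of a good or bad folding with respect to $(i, j)$ and exponent $n$ to the geometric property (iii) of \Cref{dfn folding}: that the intersection $[v_i, s_j^n(v_i)] \cap \Sigma_{S, 0}$ contains a non-degenerate sub-segment with $\tilde{v}$ in its interior. By \Cref{prop loxodromic action on berk}(d), this path splits as $[v_i, \tilde{v}] \cup [\tilde{v}, s_j^n(v_i)]$ without backtracking, and since by hypothesis $v \in \hat{\Lambda}_j \cap \Sigma_{S, 0}$ lies in the connected component of $\Sigma_{S, 0}$ containing $v_i$, the initial piece $[v_i, \tilde{v}] \subset [v_i, v]$ is automatically contained in $\Sigma_{S, 0}$. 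Property (iii) therefore reduces to the existence of a non-degenerate initial segment of $[\tilde{v}, s_j^n(v_i)]$ inside $\Sigma_{S, 0}$, equivalently, to the existence of a branch of $\Sigma_{S, 0}$ emanating from $\tilde{v}$ in the tangent direction toward $s_j^n(v_i)$.

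I would then translate this geometric condition into the inequality by applying an automorphism in $\PGL_2(K)$ (preserving all distances and the relevant structure by \Cref{prop pgl2 action on berk}) that takes $a_j \mapsto 0$ and $b_j \mapsto \infty$, so that $s_j^n$ acts as $z \mapsto \zeta_p^n z$. Using \Cref{lemma valuations}(a) together with \Cref{prop order-p elliptic}(b), the disc $D_{\tilde{v}}$ associated to $\tilde{v}$ is centered at $0$ with log radius $v(c_i') + \frac{v(p)}{p - 1}$, and the tangent direction at $\tilde{v}$ toward $s_j^n(v_i)$ corresponds to the open sub-disc $D^{*, n} \subset D_{\tilde{v}}$ containing $\zeta_p^n c_i'$. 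A direct ultrametric computation using \Cref{lemma valuations}(c) will show that the transformed inequality $v(c_l' - \zeta_p^n c_i') > v(c_l') + \frac{v(p)}{p - 1}$ is equivalent to $c_l' \in D^{*, n}$. The invariance of this condition under any choice of $c_i \in \{a_i, b_i\}$ and $c_l \in \{a_l, b_l\}$, which explains the "some (any)" phrasing, will follow from the separation estimate $d(D_i), d(D_l) > d(D_{\tilde{v}})$ guaranteed by $S$ being clustered in $\frac{v(p)}{p - 1}$-separated pairs together with \Cref{prop reduced convex hull}(e), ensuring that each pair $\{a_i, b_i\}$ (respectively $\{a_l, b_l\}$) lies in a single open sub-disc of $D_{\tilde{v}}$.

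The final and most delicate step, which I expect to be the main obstacle, is to show that the existence of some $l \notin I_{i, j}$ satisfying the inequality is equivalent not merely to a $\Sigma_S$-branch but to a genuine $\Sigma_{S, 0}$-branch at $\tilde{v}$ in the $s_j^n(v_i)$ direction. My plan is to analyze this via the cluster structure of $S$ inside $D_{\tilde{v}}$ using \Cref{prop reduced convex hull}: the initial edge of such a branch lies in $\Sigma_{S, 0}$ precisely when the cardinality $|S \cap D^{*, n}|$ is positive and even, since an interior point of this edge separates $S$ into subsets of sizes $|S \cap D^{*, n}|$ and $2g + 2 - |S \cap D^{*, n}|$. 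The condition $l \notin I_{i, j}$ (via the path-intersection formulation in \Cref{dfn folding}) asserts that the pair $\{a_l, b_l\}$ avoids the sub-disc of $D_{\tilde{v}}$ containing $D_i$, and combining this with the pair-clustering hypothesis (which forces the contribution of any single pair lying inside $D_{\tilde{v}}$ to a given sub-disc to be either $0$ or $2$) together with the fact that $\tilde{v} \in \Sigma_{S, 0}$ itself, I expect to deduce the required parity condition on $|S \cap D^{*, n}|$ and thereby conclude the equivalence.
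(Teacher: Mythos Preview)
Your proposal is correct and follows the same overall strategy as the paper: reduce via \Cref{cor bad foldings satisfy (iii)} to property (iii) of \Cref{dfn folding}, normalize by the M\"obius transformation $z \mapsto (z-a_j)/(z-b_j)$ so that $s_j^n$ becomes $z \mapsto \zeta_p^n z$, and then translate the inequality into a statement about discs using \Cref{lemma valuations}.

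The one place where your route diverges is in verifying that the relevant segment past $\tilde{v}$ lies in $\Sigma_{S,0}$ rather than merely in $\Sigma_S$. The paper handles this by setting $\tilde{v}_\epsilon := v_l \wedge s_j^n(v_i)$ and noting (via \Cref{lemma valuations}(b)) that this point lies simultaneously on $[\tilde{v}, s_j^n(v_i)]$ and on $[\tilde{v}, v_l]$; since $v_l$ is a distinguished vertex, the paper treats the containment $[\tilde{v}, \tilde{v}_\epsilon] \subset \Sigma_{S,0}$ as essentially immediate. Your parity argument on $|S \cap D^{*,n}|$ is a genuine alternative: it makes this step fully explicit, and as a bonus it shows transparently why any $l$ witnessing the inequality automatically satisfies $l \notin I_{i,j}$ (the sub-discs $D^{*,n}$ and the one containing $D_i$ are distinct because $v((\zeta_p^n-1)c_i')$ equals $d(D_{\tilde{v}})$ exactly, not strictly more). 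The paper's approach is terser; yours is more self-contained. Either is fine.
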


\begin{proof}

It follows from \Cref{prop pgl2 action on berk} that we may apply any automorphism $\sigma \in \PGL_2(K)$ to $S$ without affecting the structure of $\Sigma_S$, and so we apply the automorphism $\sigma : z \mapsto \frac{z - a_j}{z - b_j}$ (resp. $\sigma : z \mapsto z - a_j$) if $b_j \neq \infty$ (resp. $b_j = \infty$), so that the points $a_j, b_j \in \proj_K^1$ are sent to $0, \infty$ respectively, noting that $\sigma(v_i)$ corresponds to the minimal disc containing the new $\sigma(a_i), \sigma(b_i) \in K$.  After replacing $S$ (for the rest of the proof) with $\sigma(S)$ (and replacing each $a_l, b_l$ respectively with $\sigma(a_l), \sigma(b_l)$), we choose $s_j \in \PGL_2(K)$ to be the order-$p$ element which fixes $a_j = 0$ and $b_j = \infty$ given by $z \mapsto \zeta_p z$, and the inequality in (\ref{eq computing good or bad folding}) becomes simply 
\begin{equation} \label{eq computing good or bad folding simpler}
v(c_l - s_j^n(c_i)) > v(c_l) + \frac{v(p)}{p - 1}.
\end{equation}
This implies that we have $v(c_l) = v(s_j^n(c_i)) = v(\zeta_p^n c_i) = v(c_i)$, which then implies that we have $\delta(\eta_{c_l} \vee \eta_{s_j^n(c_i)}, \Lambda_{(j)}) > \frac{v(p)}{p - 1}$ by \Cref{lemma valuations}(c) and using the notation given in the statement of that lemma.  Then by definition of the subspace $\hat{\Lambda}_{(j)} \subset \Berk$, we have $\eta_{c_l} \vee \eta_{s_j^n(c_i)} \notin \hat{\Lambda}_{(j)}$.  Let $v \in \Lambda_{(j)} \cap \Sigma_{S, 0}$ be the distinguished vertex corresponding to the disc $D(v(c_i)) = D(v(c_l)) \subset \cc_K$, and let $\tilde{v}$ be the closest point in $\hat{\Lambda}_{(j)}$ to $v_i$; we have $v_i, v_l \notin \hat{\Lambda}_{(j)}$.  By letting $c_i$ (resp. $c_l$) range over $\{a_i, b_i\}$ (resp. $\{a_l, b_l\}$), we see that $v_l \vee s_j^n(v_i) \notin \hat{\Lambda}_{(j)}$.  Using \Cref{lemma valuations}(b), it is easy to deduce from this that the point $v_l \vee s_j^n(v_i)$ lies in both paths $[\tilde{v}, v_l]$ and $[\tilde{v}, s_j^n(v_i)]$.  Letting $\tilde{v}_\epsilon = v_l \vee s_j^n(v_i)$, property (ii) in \Cref{dfn folding} follows for the folding $\phi_{i, j, n}$ of $S$, and by definition it is a good or bad folding.

Assume conversely that a good or bad folding $\phi_{i, j, n}$ of $S$ exists.  Then \Cref{cor bad foldings satisfy (ii)} says that property (ii) in \Cref{dfn folding} holds.  We therefore have $[\tilde{v}_{-\epsilon}, \tilde{v}_\epsilon] \subset [v_i, s_j^n(v_i)] \cap \Sigma_{S, 0}$, which implies that there is an index $l$ such that we have $[\tilde{v}, \tilde{v}_\epsilon] \subset [\tilde{v}, w]$, where $w$ is the (unique thanks to \Cref{prop reduced convex hull}(d)) distinguished vertex lying both in the same component of $\Sigma_{S, 0}$ and in $\hat{\Lambda}_{(l)}$.  It is clear from the definition of $I_{i, j}$ that it does not contain $l$ because otherwise there would be backtracking in the journey from $v_i$ to $\tilde{v}_\epsilon \in [\tilde{v}, w]$.  Property (ii) also implies that the same points $v \in \Lambda_{(j)}$ and $\tilde{v} \in \hat{\Lambda}_{(j)}$ are the closest points in $\Lambda_{(j)}$ and $\hat{\Lambda}_{(j)}$ respectively to all of the points $\eta_{s_j^n(a_i)}, \eta_{s_j^n(b_i)}, \eta_{a_l}, \eta_{b_l} \in \Berk$, so that by \Cref{lemma valuations}(b), we have $\eta_{c_l} \vee \eta_{s_j^n(c_i)} \notin \hat{\Lambda}_{(j)}$ for each $c_l \in \{a_l, b_l\} \smallsetminus \{\infty\}$ and $c_i \in \{a_i, b_i\} \smallsetminus \{\infty\}$.  Now, using the definition of $\hat{\Lambda}_{(j)}$ and \Cref{lemma valuations}(c) as above but in reverse, we get the desired inequality in (\ref{eq computing good or bad folding simpler}).
\end{proof}

\begin{proof}[Proof (of \Cref{thm algorithm})]

\Cref{algo folding} may be interpreted as taking an input set $S_0$ and replacing it (in renditions of Step \ref{step 4}) with a sequence of multisets $S_1, S_2, \dots \subset \proj_K^1$ obtained as images of foldings; on making each replacement, we return to Step \ref{step 1}.

We first show that each $S_{m + 1}$ is the image of a good or bad folding of $S_m$ for each $m \geq 0$.  Via \Cref{lemma tilde D_j^(i)}, we see that the algorithm proceeds to run through the indices $0 \leq i \leq g - 1$ and (in Step \ref{step 3}) assigns to each one the index $j$ such that $\tilde{v}_i^{(j)} > v_i$ (where $\tilde{v}_i^{(j)} \in \Sigma_{S, 0}$ is the closest point in $\hat{\Lambda}_{(j)}$ to $v_i$) and there is no other point of the form $\tilde{v}_{j'}^{(i)}$ (for $j' \neq j, i$) in the interior of the path $[v_i, \tilde{v}_j^{(i)}]$.  One sees that this index $j$ exists from the fact that, as we have assumed that $b_g = \infty$, we may take $j = g$ and observe that we have $D^{(i, j)} = D_j^{(i)} \supsetneq D_i$.  One sees moreover that it is unique from the basic properties of the partial order observed above.  Now the algorithm proceeds, in Steps \ref{step 3}, to test whether or not for each $n \in \zz \smallsetminus p\zz$ the folding $\phi_{i, j, n} : S_m \to S_{m + 1}$ is a good or bad folding through a direct use of \Cref{prop good or bad folding computation}; if such an exponent $n$ is found for the pair of indices $(i, j)$, we proceed to Step \ref{step 4}.

In Step \ref{step 4}, our definition of $I_{i, j}$ can be verified to be the one given in \Cref{dfn folding}, and one checks using a straightforward computation (following the arguments in the proof of \Cref{prop order-p elliptic}(a)) that the formulas given for the points $a_l', b_l' \in \proj_K^1$ make $a_l' = s_j^n(a_l)$ and $b_l' = s_j^n(b_l)$, so that $S_{m + 1} := (S_m \smallsetminus \{a_l, b_l\}_{l \in I_{i, j}}) \cup \{a_l', b_l'\}_{l \in I_{i, j}}$ is the image of the folding $\phi_{i, j, n}$ of $S_m$.  The algorithm then begins again from Steps \ref{step 1}--\ref{step 2} with $S_{m + 1}$.  We note that the values of $a_g, b_g = \infty$ do not change in this process, and so in particular, we still have $\infty \in S_{m + 1}$ and the instruction in Step \ref{step 2} to label it as $b_g$ makes sense.

If, on the other hand, the computations in Step \ref{step 3} show that there is no good or bad folding of $S$ with respect to $(i, j)$, we move to Step \ref{step 5}.  If $i \neq g - 1$ (in other words, if we have not exhausted all of the distinguished vertices $v_i$ apart from $v_g$), we increase $i$ by $1$.  If instead we have reached this point with $i = g - 1$, then it follows from \Cref{rmk folding change of i} that there is no good or bad folding of $S_m$, and so the set $S_m$ is optimal by definition.  \Cref{lemma optimal is superelliptic} now says that the set $S_m$ is superelliptic, which implies by \Cref{prop folding equivalence properties} that the original input set $S_0$ is superelliptic.  This is reflected by the termination of the algorithm at Step \ref{step 5} with an output of $\Smin := S_m$.  This proves part (a) of the theorem.

Meanwhile, Step \ref{step 1} tests whether $S_m$ is a set (without repeated elements) and if not, finds the parity of the number of repeated elements.  If the multiset $S_m$ is not a set, then we must have $n \geq 1$ and that $S_m$ by definition is the image of a bad folding of $S_{m - 1}$; thus by \Cref{lemma bad foldings are bad} combined with \Cref{prop folding equivalence properties}, the input set $S_0$ is not superelliptic, as is reflected by the termination and output of the algorithm at this point.  If $S_m$ has an even number of repeated elements, then replacing $S_m$ with its underlying set yields a set which, if clustered in $\frac{v(p)}{p - 1}$-separated pairs, has the same associated subgroups $\Gamma \lhd \Gamma_0 < \PGL_2(K)$ as the original $S_m$ had.  Step \ref{step 2}, meanwhile, tests whether $S_m$ is separated in $\frac{v(p)}{p - 1}$-separated pairs; if it is not, then again the input set $S_0$ is not superelliptic by \Cref{prop clustered in pairs} combined with \Cref{prop folding equivalence properties}.  We have thus proved parts (b) and (c) of the theorem.

Finally, \Cref{lemma algorithm terminates} implies that \Cref{algo folding} terminates after a finite number of steps.
\end{proof}

\subsection{Examples solved using the algorithm} \label{sec 4 examples}

We finish by using \Cref{algo folding} to determine whether a set $S$ is superelliptic and, if so, to compute an optimal set $\Smin$ which is folding equivalent to $S$, in two examples.

\begin{ex} \label{ex finding optimal1}

Let us show through \Cref{algo folding} that the subset 
\begin{equation*}
S = \{a_0 = 7, b_0 = 12, a_1 = 0, b_1 = 5, a_2 = 1, b_2 = \infty\} \in \proj_{\qq_5}^1
\end{equation*}
 from \Cref{ex clustered in pairs not sufficient} is not $2$-superelliptic.  We have (with $v : \qq_5^\times \to \zz$ the usual $5$-adic valuation) 
\begin{equation*}
D_0 = \{z \in \cc_{\qq_5} \ | \ v(z - 2) \geq 1\}, \ D_1 = \{z \in \cc_{\qq_5} \ | \ v(z) \geq 1\}, \ D_2 = \{z \in \cc_{\qq_5} \ | \ v(z) \geq 0\},
\end{equation*}
 and that the points of Type II corresponding to these discs are the only distinguished points of $\Sigma_{S, 0}$ by \Cref{prop reduced convex hull}(d).  The only one of these discs containing $D_0$ is $D_2$, and so if we set $i = 0$ as in Step \ref{step 2} of the algorithm, we get $j = 2$ in Step \ref{step 3}.  Now putting $l = 1$ and plugging the appropriate elements into (\ref{eq testing for good or bad folding}), we verify the inequality in (\ref{eq testing for good or bad folding}) by getting $v(5) > v(6)$, and so we proceed to Step \ref{step 4}.  It is clear from the definition of $I_{i = 0, j = 2}$ in this step that we have $I_{0, 2} = \{0\}$.  Following the formulas in Step \ref{step 4}, we get $a_0' = -5$ and $b_0' = -10$, and so we must replace $S$ with 
\begin{equation*}
(S \smallsetminus \{a_0, b_0\}) \cup \{a_0', b_0'\} = \{-5, -10, 0, 5, 1, \infty\}
\end{equation*}
 and return to Step \ref{step 1}.  But this new set is clearly \textit{not} clustered in pairs, as the subset $\{-5, -10, 0, 5\}$ is a cluster of cardinality $4$ that does not contain any cluster of cardinality $2$.  Therefore, on returning to Step \ref{step 2} of \Cref{algo folding}, the algorithm ends by telling us that $S$ is not $2$-superelliptic.

We further mention that the method by which the non-loxodromic element was produced in \Cref{ex clustered in pairs not sufficient} could be understood as follows.  In terms of our original elements $s_0, s_1, s_2 \in \PGL_2(K)$, the order-$2$ elements fixing pairs of points in the new set $S$ are given by $s_2 s_0 s_2^{-1} = s_2 s_0 s_2, s_1, s_2 \in \PGL_2(K)$; the respective pairs of points fixed by $s_2 s_0 s_2$ and $s_1$ are $\{a_0' = -5, b_0' = -10\}$ and $\{a_1 = 0, b_1 = 5\}$.  Now since the disc $E := D_{a_1, b_1}$ lies in both the axis connecting $\eta_0$ to $\eta_5$ and the one which connects $\eta_{-5}$ to $\eta_{-10}$, following the reasoning in the proof of \Cref{prop clustered in pairs}, the product $(s_1)(s_2 s_0 s_2)$ must then fix $\eta_E \in \Berk$ and is therefore not loxodromic.

The (bad) folding that we have performed is shown in \Cref{fig folding example1} below, in which the nodes shown in the diagram are the distinguished vertices of each reduced convex hull; here we fix $v_0, v_1, v_2$ to be the distinguished vertices associated to the original set $S$.

\end{ex}

\begin{figure}[b]
\centering
\includegraphics[height=1.5cm]{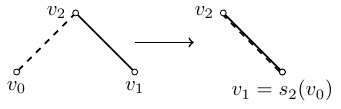}
\caption{The reduced convex hull of our original set $S$ followed by the convex hull of the of the folding performed in \Cref{ex finding optimal1}.}
\label{fig folding example1}
\end{figure}

\begin{ex} \label{ex finding optimal2}

Let us apply \Cref{algo folding} to the set $S = \{\frac{1336}{3}, -355, -110, 86, 0, 7, 1, \infty\}$ over $K := \qq_7$ (with the usual $7$-adic valuation $v : \qq_7^\times \to \zz$) with $p = 2$.  This set has no repeated elements, and it is easy to see that it is clustered in ($\frac{v(2)}{2 - 1} = 0$-separated) pairs, which are 

\noindent $\{a_0 := \tfrac{1336}{3}, b_0 := -355\}, \{a_1 := -110, b_1 := 86\}, \{a_2 := 0, b_2 := 7\}, \{a_3 := 1, b_3 := \infty\}$.  We have 
\begin{equation} \label{eq discs in finding optimal2}
D_0 = -355 + 7^4\mathcal{O}, \ D_1 = -12 + 7^2\mathcal{O}, \ D_2 = 7\mathcal{O}, \ D_3 = \mathcal{O},
\end{equation}
 where $\mathcal{O} \subset \cc_K$ is the ring of integral elements, and that the points of Type II corresponding to these discs are the only distinguished points of $\Sigma_{S, 0}$ by \Cref{prop reduced convex hull}(d).  From this we see that the discs containing $D_0$ are $D_1$ and $D_3$, with $D_1$ being the minimal disc with this property, so we get $j = 1$ in Step \ref{step 3}.  Now putting $l = 1$ and plugging the appropriate elements into (\ref{eq testing for good or bad folding}), we verify the inequality in (\ref{eq testing for good or bad folding}) by getting $v(\frac{224}{935}) > v(\frac{-111}{85})$, and so we proceed to Step \ref{step 4}.  It is clear from the definition of $I_{i = 0, j = 1}$ in this step that we have $I_{0, 1} = \{0\}$.  Following the formulas in Step \ref{step 4}, we get $a_0' = 9$ and $b_0' = -40$, and so we must replace $S$ with 
\begin{equation}
(S \smallsetminus \{a_0, b_0\}) \cup \{a_0', b_0'\} = \{9, -40, -110, 86, 0, 7, 1, \infty\}
\end{equation}
 and return to Step \ref{step 1}.
 
Now it is easy to verify that this new $S$ has no repeated elements and is still clustered in pairs, which we name as before except that the pair $\{\frac{1336}{3}, -355\}$ has been replaced by $\{a_0 := 9, b_0 := -40\}$.  Now the only disc among those listed in (\ref{eq discs in finding optimal2}) containing $D_0$ is $D_3 = \mathcal{O}$, so we get $j = 3$ in Step \ref{step 3}.  Now putting $l = 2$ and plugging the appropriate elements into (\ref{eq testing for good or bad folding}), we verify the inequality in (\ref{eq testing for good or bad folding}) by getting $v(7) > v(-1)$, and so we proceed to Step \ref{step 4}.  We observe from a careful application of the definition of $I_{i = 0, j = 3}$ in this step that we have $I_{0, 3} = \{0, 1\}$.  Following the formulas in Step \ref{step 4}, we get $a_0' = -7$, $b_0' = 42$, $a_1' = 112$, and $b_1' = -84$, and so we must replace $S$ with 
\begin{equation}
(S \smallsetminus \{a_0, b_0, a_1, b_1\}) \cup \{a_0', b_0', a_1', b_1'\} = \{-7, 42, 112, -84, 0, 7, 1, \infty\}
\end{equation}
 and again return to Step \ref{step 1}.
 
\begin{figure}[b]
\centering
\includegraphics[height=4cm]{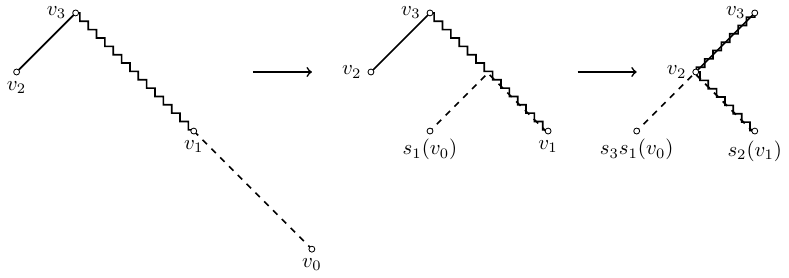}
\caption{The reduced convex hull of our original set $S$ followed by the convex hulls of the sequence of its foldings performed in \Cref{ex finding optimal2}.}
\label{fig folding example2}
\end{figure}
 
Again, this new $S$ visibly has no repeated elements and is still clustered in pairs.  We claim that the set $S$ is optimal.  This can be seen by directly running through the algorithm and seeing that for each $i = 0, 1, 2$, with $j$ chosen appropriately, the inequality in (\ref{eq testing for good or bad folding}) is not verified (specifically the algorithm calls us to check for the ordered pairs $(i, j) = (0, 2), (1, 2), (2, 3)$).  At $i = 2$, we therefore reach Step \ref{step 5}, and the algorithm tells us that $\Smin := S$ is optimal and that our original input set $\{\frac{1336}{3}, -355, -110, 86, 0, 7, 1, \infty\}$ is $2$-superelliptic.

The sequence of two (good) foldings that we performed is shown in \Cref{fig folding example2} below, in which the nodes shown in the diagram are the distinguished vertices of each reduced convex hull; here we fix $v_0, \dots, v_3$ to be the distinguished vertices associated to the original set $S$.

\end{ex}






\bibliographystyle{plain}
\bibliography{bibfile}

\end{document}